\theoremstyle{definition}
\newtheorem{theorem}{Theorem}[section]
\newtheorem{lemma}[theorem]{Lemma}
\newtheorem{corollary}[theorem]{Corollary}
\newtheorem{example}[theorem]{Example}
\newtheorem{proposition}[theorem]{Proposition}
\def\id{{\rm id}}
\def\xto{\xrightarrow}
\def\to{\rightarrow}
\def\toto{\rightrightarrows}
\def\xfrom{\xleftarrow}
\def\from{\leftarrow}
\def\action{\curvearrowright}
\def\xto{\xrightarrow}
\def\to{\rightarrow}
\def\toto{\rightrightarrows}
\def\xfrom{\xleftarrow}
\def\from{\leftarrow}
\def\action{\curvearrowright}
\def\then{\Rightarrow}
\def\mon{{\rm Mon}}
\def\hol{{\rm Hol}}
\def\R{{\mathbb R}}
\def\bar{|}
\DeclareMathOperator{\colim}{colim}
\newcommand\flowsfrom{\mathrel{\reflectbox{$\leadsto$}}}
\begin{document}

\title{\bf On Hausdorff integrations of Lie algebroids}

\author{Matias del Hoyo \and Daniel L\'opez Garcia}

\date{}

\maketitle

\begin{abstract}
We present Hausdorff versions for Lie Integration Theorems 1 and 2 and apply them to study Hausdorff symplectic groupoids arising from Poisson manifolds. To prepare for these results we include a discussion on Lie equivalences and propose an algebraic approach to holonomy. We also include subsidiary results, such as a generalization of the integration of subalgebroids to the non-wide case, and explore in detail the case of foliation groupoids. 
\end{abstract}

\noindent{{\bf Keywords:} Lie groupoids, Poisson manifolds, Foliations\\
{\bf Math. Subj. Class.:} 22A22; 53D17; 57R30.}

\tableofcontents


\section{Introduction}

%


Lie groupoids and Lie algebroids play a central role in Higher Differential Geometry, serving as models for classic geometries such as actions, foliations and bundles, and with applications to Symplectic Geometry, Noncommutative Algebra and Mathematical Physics, among others \cite{dsw,mmbook}.
Every Lie groupoid yields a Lie algebroid through differentiation, setting a rich interplay between global and infinitesimal data, which is ruled by the so-called Lie Theorems. Lie 1 constructs a maximal (source-simply connected) Lie groupoid integrating the algebroid of a given groupoid, and Lie 2 shows that a Lie algebroid morphism can be integrated to a Lie groupoid morphism under a certain hypothesis \cite{mx,mmbook}. The hardest one, Lie 3, provides computable obstructions to the integrability of a Lie algebroid \cite{cf1}. 


When working with Lie groupoids one usually allows the manifold of arrows to be non-Hausdorff. One reason for that is to include the monodromy and holonomy groupoids arising from foliations.
Another reason is that the maximal groupoid given by Lie 1 may be non-Hausdorff even when the original groupoid is. 
A Poisson manifold yields a Lie algebroid on its cotangent bundle, which is integrable if and only if the Poisson manifold has a complete symplectic realization \cite{cf2}. 
The canonical symplectic structure on the cotangent bundle integrates to a symplectic structure on the source-simply connected groupoid \cite{cdw}, 
which may be non-Hausdorff, and smaller integrations may be Hausdorff but not symplectic. This note is motivated by the problem of understanding Hausdorff symplectic groupoids arising from Poisson manifolds.


Our first main result is a Hausdorff version for Lie 1, Theorem  \ref{thm:HLie1},  showing that every Hausdorff groupoid yields a maximal Hausdorff integration. We illustrate with examples that this may or may not agree with the source-simply connected integration or with the original one. In order to prove this theorem we pay special attention to Lie equivalences, namely Lie groupoid morphisms which are isomorphism at the infinitesimal level, and their characterization by their kernels, see Proposition \ref{prop:kernel}. This characterization appears for instance in \cite{gl} and \cite{mkbook}. We review it to set notations and to serve for quick reference. Our first result also relies on the construction of monodromy and holonomy groupoids, for which we include here an original algebraic approach, see Section \ref{sec:holonomy}, that the reader may find interesting on its own.



In analogy with the classic case, one might a priori expect that a Lie algebroid morphism can be integrated to a morphism between Hausdorff groupoids if the first one is maximal. We show with the Example \ref{ex:counter} that this is not the case. Our second main result is a Hausdorff version of Lie 2 that includes a holonomy hypothesis, Theorem \ref{thm:HLie2v1}. In order to prove this, we first develop a generalized version of the integration of Lie subalgebroids \cite{mm-s} that works in the non-wide case, in Proposition \ref{prop:subalgbd}, and which also allow us to complete a neat conceptual proof for classic Lie 2.  
We include a second Hausdorff version of Lie 2, Theorem \ref{thm:HLie2v2}, in the context of Lie groupoids arising from foliations, where the holonomy hypothesis becomes automatic, and which implies Corollary \ref{cor:mhfc}, the uniqueness of the maximal Hausdorff integration.


Finally, building on the theory of VB-groupoids and VB-algebroids \cite{pradines,mkbook,dla1,gsm,bcdh}, we apply our results to show Theorem \ref{thm:application}, stating that if the algebroid induced by a Poisson manifold is integrable by a Hausdorff groupoid, then the maximal Hausdorff integration is symplectic. We achieve this by looking at the canonical symplectic form on the cotangent bundle as a VB-algebroid morphism and then integrate it to a VB-groupoid morphism. This should be compared with \cite{mx}, where Lie 2 first appeared, as a device to integrate a bialgebroid to a Poisson groupoid in a similar fashion. Our application shows that the holonomy hypothesis can indeed be computable in concrete situations. Corollary \ref{cor:application} concludes that if a Poisson manifold is integrable by a Hausdorff groupoid then it admits a Hausdorff complete symplectic realization.


A Hausdorff version for Lie 3, namely the problem of deciding whether a Lie algebroid admits a Hausdorff integration at all, is left to be addressed elsewhere. Besides the usual obstructions to integrability, we know of examples of integrable algebroids which do not admit a Hausdorff integration, such as in the foliation described in Example \ref{ex:hol=mon}, and in the Lie algebra bundle constructed in \cite[VI.5]{dl}.
A similar question, whether the canonical (source-simply connected) integration is Hausdorff, is studied in \cite{m} for Poisson manifolds and in \cite{ach} for foliations.  
In this direction, we show in Corollary \ref{cor:ssch} that if there is a Hausdorff integration then the maximal integration is Hausdorff if and only if the foliation by source-fibers has no vanishing cycles. 




\medskip


{\bf Organization.} In section 2 we provide a systematic study of Lie equivalences, featuring the characterization by their kernels, and in section 3 we give the algebraic approach to monodromy and holonomy. These two sections do not contain new results and their originality, if any, lies in the developed approach. Section 4 reviews classic Lie 1, reformulated in a categorical way, and prove its Hausdorff version, our first main result, illustrated with several examples. In section 5 we generalize some of the results regarding the integration of Lie subalgebroids to the non-wide case, and we apply them in section 6, where we provide two versions for Hausdorff Lie 2. Finally, in section 7, we prove our last main result, a Hausdorff version of the integration of Poisson manifolds by Hausdorff symplectic groupoids. We include a proof of Godement criterion for quotients of non-Hausdorff smooth manifolds as an appendix.

\medskip


{\bf Notations and conventions.} We assume a certain familiarity with the basic theory of Lie groupoids and Lie algebroids, and refer to \cite{dsw,dh,mmbook} for further details. We denote a Lie groupoid either by $G\toto M$ or simply $G$, as the ambiguity between the whole groupoid and its manifold of arrows should be solved by the context. 
The object manifold $M$ is always Hausdorff, but $G$ need not be. 
Given $x\in M$ we write $G(-,x)$ for the source fiber and $G_x$ for the isotropy group. We write $A_G\then M$ or simply $A_G$ for the corresponding algebroid, and use the convention that $A_G=\ker ds\bar_M$.  If $\phi:G'\to G$ is a Lie groupoid morphism, we write $A_\phi$ for the map induced among the algebroids. The last section uses the notions of VB-groupoids and VB-algebroids following the treatment given in \cite{bcdh}.

\medskip

{\bf Acknowledgements.}
We thank H. Bursztyn, R. L. Fernandes and F. Bischoff for comments and suggestions on the first version of the paper. We also thank the anonymous referee for the thorough report which helped us to improve the article considerably.
MdH was partially supported by National Council for Scientific and Technological Development — CNPq grants 303034/2017-3 and 429879/2018-0, and by FAPERJ grant 210434/2019. 
DL was partially supported by PhD CNPq grant 140576/2017-7.

\section{Basics on Lie equivalences}


We review basic facts about morphisms inducing an isomorphism at the infinitesimal level. The results here are elementary and are scattered in the literature. We collect them to set notations and for quick reference throughout the paper.

\medskip


We say that a Lie groupoid morphism $\phi:G'\to G$ is a {\bf Lie equivalence} if it induces an isomorphism 
$A_\phi:A_{G'}\to A_G$ between the corresponding Lie algebroids. 
Given $G$ a Lie groupoid, its source-connected component $G^0\subset G$ is an open subgroupoid with the same objects and the same Lie algebroid, and therefore, $\phi:G'\to G$ is a Lie equivalence if and only if $\phi:G'^0\to G^0$ is so. In particular, the inclusion $G^0\to G$ is a Lie equivalence.
We will focus our attention on Lie equivalences $\phi$ between source-connected Lie groupoids. Without loss of generality, we will assume that $M'=M$ and that $\phi\bar_M$ is just the identity.
Lie equivalences relate the several integrations of a given Lie algebroid.



\begin{example}
Lie groups are the same as Lie groupoids with a single object. A Lie equivalence between Lie groups $\phi:G'\to G$ is a homomorphism inducing isomorphism on the Lie algebras. If $G$ and $G'$ are connected then $\phi$ is surjective, its kernel $K$ is discrete and lies in the center, and it follows that $\phi:G'\to G'/K\cong G$ is a topological covering (see e.g. \cite[I.11]{k}).
\end{example}

\begin{example}
If $M$ is connected, 
the projection $\phi:\pi_1(M)\to P(M)$ from its fundamental groupoid to its pair groupoid is a Lie equivalence, as both integrate $TM\then M$. There is a correspondence between (pointed) transitive groupoids and principal bundles (see eg. \cite[3.5.3]{dh}), under which the Lie equivalence $\phi$ is associated with the universal covering map $\tilde M\to M$. 
\end{example}

The next example shows the relevance of the source-connected hypothesis.

\begin{example}
Let $G\toto M$ be an \'etale Lie groupoid, namely one on which $G$ and $M$ have the same dimension. Then the Lie algebroid $A_G\then M$ is the zero vector bundle and the inclusion from the unit groupoid
$U(M)\to G$ is a Lie equivalence. This means, for instance, that if $G\action M$ is a discrete group acting on a manifold, then the dynamics is not seen at the infinitesimal level.
\end{example}

\medskip


Unlike the case of Lie groups, a Lie equivalence may not be a covering map at the level of arrows, even if we restrict ourselves to Hausdorff source-connected Lie groupoids, as we will see later in Example \ref{ex:deformation}.
In any case, there are strong ties between Lie equivalences and coverings maps. 
We organize them and other basic properties in the following proposition.

\begin{proposition}\label{prop:lie-equiv}
Let $\phi:G'\to G$ be a Lie equivalence between source-connected groupoids. Then (i) the map between the arrows $\phi_1:G'_1\to G_1$ is \'etale, namely a local diffeomorphism, (ii) the orbits of $G'$ and $G$ agree, and (iii)
the map between the isotropies $\phi_x:G'_x\to G_x$ and between the source-fibers $\phi\bar_{G'(-,x)}:G'(-,x)\to G(-,x)$ are covering maps.
\end{proposition}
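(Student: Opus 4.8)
The plan is to derive everything from the single hypothesis that $A_\phi$ is an isomorphism, using the fact that the source map $s\colon G_1\to M$ is a submersion with $A_G=\ker ds\bar_M$.

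First I would prove (i). Since $\phi\bar_M=\id_M$, the morphism $\phi$ commutes with the source maps, so $\phi_1$ restricts on each source fiber $G'(-,x)$ to a map into $G(-,x)$, and its differential at a unit $1_x$ is exactly $(A_\phi)_x$, an isomorphism by hypothesis. Thus $\phi_1$ is a local diffeomorphism at every unit. To propagate this to an arbitrary arrow $g'\in G'_1$, I would use right translation: the right multiplication $R_{g'}$ in $G'$ and $R_{\phi(g')}$ in $G$ are diffeomorphisms between source fibers intertwined by $\phi_1$, so the square relating $d\phi_1$ at $1_{t(g')}$ and at $g'$ commutes, forcing $d\phi_1$ to be an isomorphism at $g'$ as well. (Equivalently, one differentiates $\phi_1\circ R_{g'} = R_{\phi(g')}\circ\phi_1$.) Hence $\phi_1$ is étale. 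The tangent space of $G'_1$ at $g'$ decomposes via the source submersion into a "horizontal" part mapping isomorphically to $T_x M$ and the "vertical" part $T_{g'}G'(-,x)$, on which $d\phi_1$ is the right-translate of $(A_\phi)_{t(g')}$; combining the two gives the global statement.

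Next, (ii): since $\phi_1$ is étale and $\phi\bar_M=\id$, the image of $\phi_1$ is open in $G$, and it is a subgroupoid; because $G$ is source-connected, $\phi_1$ is in fact surjective (the image of a source fiber is open, nonempty, and — being closed under composition and inverses — also closed in the connected source fiber $G(-,x)$, hence everything). Surjectivity of $\phi_1$ immediately implies the orbits of $G'$ and $G$ through any point coincide, since the orbit of $x$ is $t(s^{-1}(x))$ in either groupoid. For (iii), the isotropy $G'_x$ is the fiber of $s\bar_{G'(-,x)}$ over the unit $1_x$ — wait, more precisely $G'_x = G'(x,x)\subset G'(-,x)$, and $\phi$ restricts to a group homomorphism $\phi_x\colon G'_x\to G_x$ that is a local diffeomorphism by (i) and surjective by the same source-connectedness argument applied within the isotropy (or by a diagram chase from surjectivity of $\phi_1$ together with the orbit statement); a surjective étale homomorphism of Lie groups is a covering map. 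For the source fibers, $\phi\bar_{G'(-,x)}\colon G'(-,x)\to G(-,x)$ is a surjective local diffeomorphism between manifolds of the same dimension; to upgrade it to a covering map I would exhibit $G'_x$ acting freely and properly on $G'(-,x)$ by right multiplication with $\phi\bar_{G'(-,x)}$ realizing the quotient $G'(-,x)/G'_x \cong G(-,x)$, so that the covering property of $\phi_x$ transfers fiberwise; alternatively, one checks directly the evenly-covered-neighborhood condition using that right translation by $\ker\phi_x$ permutes the sheets.

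The main obstacle I anticipate is the step from "surjective local diffeomorphism" to "covering map" for the source fibers: this is genuinely false in general without properness, so the argument must use the groupoid structure (the transitive, free action of the kernel $\ker\phi_x$ on the fibers of $\phi\bar_{G'(-,x)}$), and here the potential non-Hausdorffness of $G'$ requires a little care — one wants $\ker\phi_x$ to be a discrete subgroup (it is: $A_\phi$ iso forces $\ker\phi$ to be étale over $M$) and its action to be properly discontinuous on the source fiber. Identifying $\ker\phi_x$ with the deck group and checking proper discontinuity is the crux; everything else is a diagram chase plus the elementary fact that surjective étale homomorphisms of Lie groups are coverings.
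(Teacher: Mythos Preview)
Your proposal is correct in outline and close to the paper's argument for (i) and (iii), but organized differently for (ii). For (i) the paper packages your right-translation-plus-splitting argument as a five-lemma applied to the natural exact sequence $0\to A_{t(g)}\xrightarrow{dR_g}T_gG\xrightarrow{ds}T_{s(g)}M\to 0$; this is the same content. For (ii) the paper does \emph{not} go through global surjectivity of $\phi_1$: instead it observes that a Lie equivalence preserves the anchor $\rho_x\colon A_x\to T_xM$, hence preserves its image $T_xO_x$ and its kernel (the isotropy Lie algebra), and then uses connectedness of orbits to conclude the orbits agree. Your route---first show $\phi_1$ surjective, then read off orbit equality---also works, but your phrase ``closed under composition and inverses'' is not literally true of a source fiber; the correct version is that the image of $\phi_1$ is an open wide subgroupoid of $G$, and for $g\notin\mathrm{im}\,\phi_1$ the set $R_g(\mathrm{im}\,\phi_1\cap G(-,t(g)))$ is an open neighborhood of $g$ in $G(-,x)$ disjoint from $\mathrm{im}\,\phi_1$, so the image is clopen in each connected source fiber. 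For (iii) the paper makes the covering property explicit by locally trivializing the principal bundles $t\colon G'(-,x)\to O_x$ and $t\colon G(-,x)\to O_x$ so that $\phi\bar_{G'(-,x)}$ reads as $\phi_x\times\id$ on a product chart; this immediately gives evenly covered neighborhoods once $\phi_x$ is known to be a covering, and sidesteps the properly-discontinuous-action check you flagged. Your kernel-action approach reaches the same conclusion but the paper's local product form is cleaner and avoids any Hausdorffness worry.
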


\begin{proof}
Given $y\xfrom g x$ an arrow in $G$, the right multiplication $R_g:G(-,y)\to G(-,x)$ and the source map yield a natural short exact sequence relating the fiber of the Lie algebroid $A_y=\ker_{u(y)}ds=T_{u(y)}G(-,y)$, the tangent to the arrows and the tangent to the objects:
$$0 \to A_y\xto{d R_g}T_gG \xto{ds} T_xM\to 0$$
If $\phi$ is a Lie equivalence, then it yields isomorphisms on $A_y$ and $T_xM$, and by the five lemma, it should also induce another on $T_gG$. This proves the first claim.

Given $G\toto M$ a Lie groupoid and $x\in M$, the anchor map $\rho_x:A_x\to T_xM$ is natural, it has kernel the Lie algebra of the isotropy $G_x$ and cokernel the tangent space to the orbit $T_xO_x$.  Then, a Lie equivalence $\phi$ should preserve the kernel and cokernel of the anchor. Since in a source-connected groupoid the orbits are connected, (ii) follows, and it also follows that the morphisms between the isotropy groups are Lie equivalences.

For $x\in M$, we see $\phi\bar_{G'(-,x)}:G'(-,x)\to G(-,x)$ as a map over the orbit $O_x\subset M$ using $t'\bar_{G'(-,x)}$ and $t\bar_{G(-,x)}$, 
which are principal bundles with groups $G'_x$ and $G_x$ via right multiplication. Thus we can locally split $\phi\bar_{G'(-,x)}$ as $\phi_x\times\id:G'_x\times U\to G_x\times U$, from where both its image and its complement are opens. Since $G$ is source-connected, $\phi\bar_{G'(-,x)}$ is surjective, and so are the morphisms $\phi_x$.
Claim (iii) easily follows from here. 
\end{proof}

The previous proposition implies that source-simply-connected Lie groupoids are {\bf maximal integrations} among the source-connected ones. In fact, if $G$ is source-simply connected, the covering maps $\phi\bar_{G'(-,x)}:G'(-,x)\to G(-,x)$ are one-to-one for every $x$, and so does $\phi:G'\to G$.



%
%


Lie equivalences can be characterized by their kernels.
The {\bf kernel} $K\subset G'$ of a groupoid morphism $\phi:G'\to G$ is the subgroupoid of arrows that are mapped into an identity. 
It is wide, namely it has the same objects as $G'$, and it is normal, namely if \smash{$x \xfrom k x$} is in $K$ then \smash{$y\xfrom{g^{-1}kg}y$} is also in $K$ for every \smash{$x\xfrom g y$}. If $\phi$ is injective on objects then $K$ is intransitive, namely its source and target maps agree. The kernel may fail to be a smoothly embedded submanifold in general. We say that a subgroupoid $K\subset G$ is {\bf swind} if it is smoothly embedded, wide, intransitive, normal, and have discrete isotropy.

\begin{proposition}\label{prop:kernel}
Let $G'$ be a source-connected Lie groupoid, possibly non-Hausdorff. There is a 1-1 correspondence between Lie equivalences $\phi:G'\to G$ with $G$ source-connected and swind subgroupoids $K\subset G'$. Moreover, $G$ is Hausdorff if and only if $K$ is closed. 
\end{proposition}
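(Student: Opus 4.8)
The plan is to build the correspondence in both directions, check the two constructions are mutually inverse up to isomorphism over $G'$, and then treat the Hausdorff clause separately. The only genuinely non-formal ingredient is the construction of a quotient Lie groupoid, which I would handle through the Godement criterion proved in the appendix.

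First, to pass from a Lie equivalence $\phi\colon G'\to G$ (arranged so that $\phi|_M=\id$, with $G$ source-connected) to a swind subgroupoid, I would take $K=\ker\phi=\phi_1^{-1}(u(M))$ and read off its properties from Proposition~\ref{prop:lie-equiv}. Wideness, intransitivity (since $\phi$ is injective on objects) and normality were already recorded before the statement; that $K$ is embedded of dimension $\dim M$ and étale over $M$ via the source follows from $\phi_1$ being étale, as $K$ is then the preimage of the embedded submanifold $u(M)\subset G_1$; and the isotropy of $K$ at $x$ is $\ker(\phi_x\colon G'_x\to G_x)$, which is discrete because $\phi_x$ is a covering map. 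Conversely, given a swind $K\subset G'$, I would form $G:=G'/K$, where $g\sim h$ iff $s'(g)=s'(h)$ and $hg^{-1}\in K$; wideness, intransitivity and normality make this a groupoid congruence over $M$, with quotient map $q\colon G'\to G$ satisfying $\ker q=K$, and $G$ source-connected since $q$ is onto.

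The crux is equipping $G'/K$ with a (possibly non-Hausdorff) smooth structure making it a Lie groupoid and $q$ a Lie equivalence. For this I would apply the Godement criterion to the relation $R\subset G'_1\times G'_1$: it is the image of the injective immersion $K\times_M G'_1\to G'_1\times G'_1$, $(k,g)\mapsto(kg,g)$ (fiber product over source of $K$ and target of $G'_1$), whose composition with the second projection is the pullback of the étale map $K\to M$, hence étale; a local section of that projection exhibits $R$ locally as a graph, so $R$ is embedded and the two projections $R\to G'_1$ are étale---precisely Godement's hypotheses, with \emph{no} closedness required. Then $q$ is a local diffeomorphism, the groupoid operations descend along it (normality handling multiplication), and $A_q$ is an isomorphism, so $q$ is a Lie equivalence. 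For the ``$1$-$1$'' part: $\ker q=K$ by construction, while for a Lie equivalence $\phi$ the induced $\overline{\phi}\colon G'/\ker\phi\to G$ is injective (as $\phi|_M=\id$, $\phi(g)=\phi(h)$ forces $s'(g)=s'(h)$ and $hg^{-1}\in\ker\phi$), surjective (Proposition~\ref{prop:lie-equiv}(ii) makes $\phi$ onto), and étale, hence an isomorphism over $G'$.

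Finally, for the Hausdorff clause: since $q\times q$ is an open surjection and $R=(q\times q)^{-1}(\Delta_G)$, the groupoid $G$ is Hausdorff iff $R$ is closed in $G'_1\times G'_1$. If $K$ is closed then $R=\delta^{-1}(K)$ is closed, for $\delta(g,h)=hg^{-1}$ defined on the closed set $\{(g,h):s'(g)=s'(h)\}=(s'\times s')^{-1}(\Delta_M)$. Conversely, if $G$ is Hausdorff then $u(M)$ is closed in $G_1$---it is the equalizer of $g\mapsto g$ and $g\mapsto g^2$ inside the closed subset $\{s=t\}$---so $K=q^{-1}(u(M))$ is closed. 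I expect the only subtle point, beyond routine bookkeeping, to be verifying Godement's hypotheses for $R$ in the non-Hausdorff category, namely that $R$ is an embedded but not necessarily closed submanifold.
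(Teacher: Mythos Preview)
Your proof is correct and follows essentially the same approach as the paper: take the kernel in one direction, and in the other apply the Godement criterion to the equivalence relation coming from the $K$-action on $G'_1$ by translation. The only cosmetic differences are that the paper uses right rather than left multiplication and packages the Godement verification into its lemma on principal groupoid actions (Lemma~\ref{principal}(a)), and for the Hausdorff clause it invokes Lemma~\ref{lemma:units-are-closed} directly rather than your equalizer argument and the closedness of $R$.
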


\begin{proof}
Given a Lie equivalence $\phi:G'\to G$ with both $G',G$ source-connected, 
$\phi$ is a submersion between the arrows and between the isotropies, as shown in Proposition \ref{prop:lie-equiv}, so the kernel $K=\phi^{-1}(M)\subset G'$ is smooth, intransitive, and the isotropies $K_x\subset G'_x$ are discrete.
 
Conversely, given a swind subgroupoid $K\subset G'$, right multiplication defines a principal Lie groupoid action $G'\times_M K \to G'$,  $(g,k)\mapsto gk$, so we can apply Godement criterion (see \ref{prop:godement}, \ref{principal}).
It follows that the orbit space $G=G'/K$ is a manifold and the projection $G'\to G$ a submersion. Since $K$ is normal, $G$ inherits a groupoid structure over $M$, becoming a Lie groupoid. To see that the quotient map $G'\to G$ is a Lie equivalence, note that it yields a fiberwise epimorphism $A_{G'}\to A_G$ between the Lie algebroids and that both algebroids have the same rank.

Finally, since a Lie equivalence $\phi:G'\to G$ is a quotient map, $G$ is Hausdorff if and only if $M$ is closed, see Lemma \ref{lemma:units-are-closed}, and this is the case if and only if $K=\phi^{-1}(M)\subset G'$ is closed.
\end{proof}


The previous proposition can be found in \cite{gl}.
It can also be seen as an instance of a more general result, characterizing fibrations by their {\em kernel systems} (cf. \cite[\S 1.2.4]{mkbook}). A Lie equivalence $\phi:G'\to G$ is an example of a fibration, and its kernel system is simply the kernel $K\subset G'$. 

\begin{example}\label{ex:deformation}
Let $G'=\R^2\toto \R$ be the trivial line bundle over the real line, viewed as an intransitive source-connected Lie groupoid with source and target the (first) projection and multiplication the fiberwise addition. Then 
$K=\{(t,n/t):t\neq0,n\in\mathbb Z\}\subset G'$ is a closed swind subgroupoid, the projection $G'\to G=G'/K$ is a Lie equivalence between Hausdorff Lie groupoids by Prop. \ref{prop:kernel}, and it is not a covering map at the level of arrows. $G$ is the deformation space of $S^1$ into its Lie algebra, its isotropy groups are $G_t=S^1$ for $t\neq 0$ and $G_0=\R$. 
\end{example}

For the sake of completeness, we include the following lemma.

\begin{lemma}\label{lemma:units-are-closed}
A Lie groupoid $G\toto M$ is Hausdorff if and only if $M\subset G$ is closed.
\end{lemma}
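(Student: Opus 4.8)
The plan is to prove both implications, with the nontrivial direction being that $M \subset G$ closed forces $G$ to be Hausdorff. For the easy direction, if $G$ is Hausdorff then every point of $G$ is closed; but more structurally, $M$ is the equalizer of the source and target maps $s, t : G \to M$, i.e. $M = \{g : s(g) = t(g)\} \cap \{g : u(s(g)) = g\}$ — more simply, $M = u(M)$ is the image of the unit embedding $u : M \to G$, which is a closed embedding onto its image precisely because a section of a continuous map into a Hausdorff space has closed image (the image is $\{g \in G : g = u(s(g))\}$, the equalizer of $\id_G$ and $u \circ s$, hence closed when $G$ is Hausdorff). So this direction is immediate.

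For the converse, suppose $M \subset G$ is closed. I would use the standard criterion that a manifold (or topological space locally homeomorphic to a Hausdorff space, as our $G$ is, being locally Euclidean) is Hausdorff if and only if the diagonal is closed, or equivalently — and this is the form best suited here — if and only if for every pair of sequences (or more carefully, nets, but sequences suffice by first countability of manifolds) $g_n \to a$ and $g_n \to b$ one has $a = b$. Given such a situation with $g_n \to a$ and $g_n \to b$, the continuity of the structure maps and the group-like operations gives $g_n^{-1} \to a^{-1}$ and $g_n^{-1} \to b^{-1}$, hence $g_n^{-1} g_n \to a^{-1} b$; but $g_n^{-1} g_n = u(s(g_n)) \in M$ for all $n$, and also $g_n^{-1} g_n = u(s(g_n)) \to u(s(a)) \cdot$ — more directly, $g_n^{-1}g_n \to a^{-1}a = u(s(a)) \in M$. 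Since $M$ is closed and $u(s(a)) \in M$, the limit $a^{-1}b$, which equals the limit $\lim u(s(g_n))$ that already lies in the closed set $M$, satisfies $a^{-1}b \in M$. One must be slightly careful: a priori a sequence in a non-Hausdorff space may have several limits, so I would phrase it as: the constant-free statement is that $g_n^{-1}g_n$ is a sequence lying entirely in $M$ which converges to $a^{-1}b$; since $M$ is closed in $G$ and $M$ is itself Hausdorff, and $g_n^{-1}g_n = u(s(g_n)) \to u(s(a))$ in $M$ (using continuity of $s$ and $u$ and that $M$ is Hausdorff so this limit in $M$ is unique), we get $a^{-1}b = u(s(a)) \in M$, i.e. $a^{-1}b$ is a unit. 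Then $a = a \cdot u(s(a)) \cdot \dots$; concretely $b = a\,(a^{-1}b) = a \cdot u(\text{target of }a^{-1}b)$, and since $a^{-1}b$ is a unit composable with $a$ on the right, $b = a$.

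The main obstacle is bookkeeping about limits in the non-Hausdorff source space $G$: one cannot simply invoke uniqueness of limits for sequences in $G$, so each step that "takes a limit in $G$" must be justified either by landing in the Hausdorff subspace $M$ (where limits are unique) or by using continuity to propagate a chosen limit. The clean way is to only ever assert $g_n \to a$, $g_n \to b$ as hypotheses and derive $a^{-1} b \in M$ purely by continuity of inversion and multiplication (these are continuous on $G$ and $G \times_M G$ regardless of Hausdorffness), together with the single Hausdorff fact that the sequence $s(g_n) \to s(a)$ in the Hausdorff manifold $M$ has a unique limit, so that $\lim g_n^{-1}g_n = \lim u(s(g_n)) = u(s(a))$. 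I would also note the reduction that it suffices to check the diagonal condition, or the separation of pairs of points, only for points $a,b$ in the same source fiber and same target fiber (otherwise $s, t$ already separate them since $M$ is Hausdorff), which is exactly the situation where the argument above applies. Finally, since $G$ is locally homeomorphic to Euclidean space it is first countable, so testing with sequences rather than nets is legitimate.
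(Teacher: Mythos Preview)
Your proof is correct and follows essentially the same approach as the paper: for the nontrivial direction you take a sequence $g_n$ with two limits $a,b$, observe that $s(a)=s(b)$ and $t(a)=t(b)$ since $M$ is Hausdorff, and then use that $g_n^{-1}g_n$ are units converging to $a^{-1}b$, forcing $a^{-1}b\in M$ and hence $a=b$. The only cosmetic difference is that for the easy direction you invoke the equalizer description $M=\{g:g=u(s(g))\}$ rather than the paper's sequence argument, and the paper phrases both directions as contrapositives; the substance is the same.
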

\begin{proof}
If the sequence $g_n$ has two different limits $g,g'\in G$ then $g,g'$ must have the same source and target, for $M$ is Hausdorff, and then $g'^{-1}g=\lim (g_n)^{-1}g_n$ is in the closure of the units, so $M$ is not closed. Conversely, if $M$ is not closed then there is a sequence $u(x_n)\to g$ with $g$ not a unit, then $u(x_n)=usu(x_n)$ has two limits $g,u(s(g))$ and $G$ cannot be Hausdorff. 
\end{proof}


\section{An algebraic approach to holonomy}

\label{sec:holonomy}


We propose an algebraic approach to the monodromy and holonomy groupoids, which is equivalent to the one in the literature, provides an alternative insight into holonomy, and makes explicit some of their fundamental properties.

\medskip


Given $M$ a manifold and $F$ a foliation, by a {\bf foliated chart} $(U,\phi)$ we mean a chart
$\phi=(\phi_1,\phi_2):U\xto\sim\R^p\times\R^q$ that is a foliated diffeomorphism between $F\bar_U$ and the foliation by the second projection.
Given a foliated chart $(U,\phi)$, the {\bf local monodromy groupoid} $\mon(F\bar_U)$ is the Lie groupoid arising from the submersion $\phi_2:U\to\R^q$. Its objects are $U$, and it has one arrow $y\from x$ if $\phi_2(x)=\phi_2(y)$,  there is no isotropy and the orbits are the plaques.
An inclusion of foliated charts $U\subset V$ yields an inclusion $\mon(F\bar_U)\to \mon(F\bar_V)$. The {\bf monodromy groupoid} of $F$ can be defined as the colimit of the local monodromy groupoids and inclusions:
$$\mon(F) = \colim_{(U,\phi)} \mon(F\bar_U)$$

$\mon(F)$ is well-defined, at least set-theoretically, for the category of groupoids is cocomplete, namely every colimit exists \cite[p. 4]{gz}.
We will later show that this naturally inherits a smooth structure, but first, let us relate our approach with the one in the literature (eg. \cite{c,mmbook}).

\begin{lemma}\label{lemma:paths}
Set-theoretically, $\mon(F)$ is the disjoint union of the fundamental groupoids of the leaves, namely it has objects the points of $M$, and an arrow $y\xfrom g x$ in $\mon(F)$ identifies with the homotopy class of a path $y\overset\gamma\flowsfrom x$ within a leaf. 
\end{lemma}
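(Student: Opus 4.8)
The plan is to unwind the colimit defining $\mon(F)$ and identify it with the familiar path-groupoid model. First I would fix notation: for a leaf $L$ through $x$, let $\Pi_1(L)$ be its fundamental groupoid, with arrows homotopy classes (rel endpoints) of paths within $L$; set-theoretically I want to produce a bijection between the arrow set of $\colim_{(U,\phi)}\mon(F\bar_U)$ and $\bigsqcup_L \Pi_1(L)$, compatible with source, target, and composition. The colimit in the category of groupoids can be computed as the coequalizer/quotient of the free product (fundamental-groupoid-style) construction, but more concretely, since all the inclusions $\mon(F\bar_U)\to\mon(F\bar_V)$ are injective on objects and arrows, an arrow of the colimit is represented by a finite \emph{chain} of arrows, each living in some local monodromy groupoid $\mon(F\bar_{U_i})$, with consecutive arrows composable, modulo the relations: (a) an arrow and its image under an inclusion of foliated charts are identified, and (b) composition performed inside a single $\mon(F\bar_U)$ agrees with the formal composition in the colimit.

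The key steps, in order. (1) \emph{Define a map} $\Phi\colon \mathrm{Arr}(\mon(F))\to \bigsqcup_L\Pi_1(L)$: given a chain of local arrows $y_i\from x_i$ in $\mon(F\bar_{U_i})$ with $x_{i+1}=y_i$, each local arrow in $\mon(F\bar_U)$ (objects having the same $\phi_2$-value) can be joined by a canonical path inside the plaque of $U$ — the plaques are contractible, so this path is well-defined up to homotopy in $L$; concatenating gives a homotopy class of a path in the leaf $L$. One checks $\Phi$ respects the defining relations (a) and (b), hence descends to the colimit. (2) \emph{Construct an inverse} $\Psi$: given a path $\gamma\colon x\leadsto y$ in a leaf $L$, cover its compact image by finitely many foliated charts $U_1,\dots,U_n$ with $\gamma([t_{i-1},t_i])\subset U_i$ (Lebesgue number argument), and send $\gamma$ to the chain of local arrows $\gamma(t_i)\from\gamma(t_{i-1})$ in $\mon(F\bar_{U_i})$; show this is independent of the chart cover and subdivision (refinement/common-refinement argument) and of the homotopy class of $\gamma$ (a homotopy has compact image, subdivide the square). (3) \emph{Check $\Phi\Psi=\id$ and $\Psi\Phi=\id$}, and that both are functorial (compatible with source, target, identities, and composition), which for $\Psi$ amounts to concatenation of paths corresponding to concatenation of chains. (4) Conclude that the objects are the points of $M$ (immediate, as every local monodromy groupoid has object set the chart domain and these cover $M$) and that the arrow set is as claimed.

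The main obstacle I expect is \emph{well-definedness}, i.e. showing the two maps are insensitive to the choices involved — which foliated charts, how the path is subdivided, and the representative within a homotopy class — and that the colimit relations are exactly matched. Concretely: verifying that two different chains representing the same colimit arrow map to homotopic paths (this is where the contractibility of plaques and a careful refinement-of-chart-covers argument enters), and conversely that a homotopy of paths can be subdivided finely enough to be realized as a sequence of elementary moves among chains. This is a standard but slightly fiddly "nerve of a cover / groupoid of a foliation" bookkeeping argument; once it is in place, functoriality and the identification of objects are routine. A clean way to organize it is to first treat the transitive case (a single leaf with its fundamental groupoid as a colimit of local pieces, essentially the classical fact that $\Pi_1$ of a space is the colimit of $\Pi_1$ of a good cover) and then observe that $\mon(F)$ is the disjoint union over leaves because the local monodromy groupoids never connect points in different leaves.
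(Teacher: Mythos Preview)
Your proposal is correct and follows essentially the same approach as the paper. The paper makes the colimit description explicit via the three-step Gabriel--Zisman construction (graph colimit, path category, quotient by commutative triangles), arriving at exactly your chains-modulo-relations model, and then, like you, builds the map to $\coprod_L \pi_1(L)$ by concatenating plaque-paths and appeals to a basepoint-free Van Kampen argument (subdividing paths and homotopies into chart-sized pieces) for the inverse; your Lebesgue-number and common-refinement bookkeeping is precisely what the paper gestures at.
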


\begin{proof}
Every groupoid has an underlying graph, consisting of its objects, arrows, source and target. The 3-steps construction of a groupoid colimit $\colim_{\alpha} G^\alpha$ goes as follows \cite[p. 4]{gz}: 
(i) compute the graph colimit $G^\infty$ levelwise, namely $G^\infty_0 =\colim_{\alpha} G^\alpha_0$, and $G^\infty_1 =\colim_{\alpha} G^\alpha_1$, and $s^\infty,t^\infty$ are the map induced by the $s^\alpha,t^\alpha$;
(ii) build the path category $P(G^\infty)$, with the same objects as $G^\infty$ and arrows the chains of arrows in $G^\infty$, and 
(iii) mod out $P(G^\infty)/\sim$ by all the relations spanned by the commutative triangles on each $G^\alpha$.

From this, it is rather clear that the objects of $\mon(F)$ are the points of $M$, and that we can regard an arrow $y\xfrom g x$ in $\mon(F)$ as the class of a discrete path $(g_k,\dots,g_1)$ where $g_i=(y_i\from x_i)_{U_i}\in \mon(F\bar_{U_i})$, $x_i=y_{i-1}$, $x_1=x$ and $y_k=y$, under the equivalence relation generated by:
\begin{itemize}
 \item[(i)] replacing some $g_i$ by $\iota(g_i)$ if $\iota:\mon(F\bar_{U})\to \mon(F\bar_{V})$ is a chart inclusion;
 \item[(ii)] replacing $g_i,g_{i-1}$ by the product $g_ig_{i-1}$ if they belong to the same chart; and
 \item[(iii)] insert $h=id_{x_i}\in\mon(F\bar_{U_i})$ between $g_i$ and $g_{i-1}$. 
\end{itemize}
To each arrow $(g_k,\dots,g_1)$ we can associate the juxtaposition of the segment paths within each chart, hence defining a groupoid map 
$\mon(F)\to\coprod_L \pi_1(L)$. 
The proof that this is indeed a groupoid isomorphism can be done leafwise, and it is a basepoint-free version of Van Kampen theorem, similar to that in \cite[1.7]{may}, subdividing continuous paths and homotopies into small enough pieces, each of them included in some foliated chart.
\end{proof}


Given $(U,\phi)$, $(U',\phi')$ foliated charts and given $x\in U\cap U'$, the {\bf transverse transition map} at $x$ is the (germ of a) diffeomorphism
$\gamma_{U'U}^x:(\R^q,\phi_2(x))\to(\R^q,\phi'_2(x))$ given by $\gamma_{U'U}^x(y)=(\phi'\phi)_2^{-1}(\phi_1(x),y)$.
More generally, given $y\xfrom g x$ in $\mon(F)$, and given $(U,\phi)$, $(U',\phi')$ foliated charts around $x$ and $y$, the {\bf holonomy} $\gamma^g_{U'U}:(\R^q,\phi_2(x))\to(\R^q,\phi'_2(y))$ of $g$ with respect to $U,U'$ is defined by representing $g$ as a discrete path
$(g_k,\dots,g_1)$, $g_i\in\mon(F\bar_{U_i})$, $U=U_1$, $U'=U_k$, as the composition of the transverse transition maps $\gamma^{t(g_i)}_{U_{i+1}U_{i}}$. This is well-defined for the above composition is invariant under the three elementary moves described in Lemma \ref{lemma:paths}. 


\begin{proposition}\label{prop:mon-hol}
The monodromy groupoid $\mon(F)$ is naturally a Lie groupoid, possibly non-Hausdorff. The inclusions $\mon(F\bar_U)\to\mon(F)$ are smooth, and the colimit is valid within the category of Lie groupoids.
\end{proposition}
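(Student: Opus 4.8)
The plan is to equip the set-theoretic colimit $\mon(F)$ with a smooth structure by using the local monodromy groupoids as charts, verify that the structure maps are smooth, and then check the universal property in $\LC$ (Lie groupoids). Concretely, I would proceed as follows.

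First I would build an atlas for the manifold of arrows $\mon(F)_1$. For each foliated chart $(U,\phi)$ the local monodromy groupoid $\mon(F\bar_U)_1 = U\times_{\R^q} U$ is a manifold, and by Lemma \ref{lemma:paths} the canonical map $\mon(F\bar_U)_1\to\mon(F)_1$ is injective (a path inside a single chart is homotopically trivial rel endpoints precisely when its endpoints lie on the same plaque). I would declare these images to be open and the maps to be diffeomorphisms onto their images. To see this is a well-defined atlas I must check that, for two charts $(U,\phi)$ and $(V,\psi)$, the overlap of their images in $\mon(F)_1$ is open in each and the transition map is a diffeomorphism; this is exactly where the \textbf{holonomy} $\gamma^g_{VU}$ enters — an arrow $g$ lies in both images iff its source and target admit plaques through $U$ and $V$ matched by $g$, and the transition is expressed through the germ of $\gamma^g_{VU}$, which is smooth because it is a finite composition of transverse transition maps, each a smooth local diffeomorphism. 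Then $s^\infty,t^\infty$ are smooth submersions (they are so chartwise), units and inversion are smooth, and multiplication is smooth because composing two short discrete paths and re-expressing the product in a third chart is again governed by composing holonomies; the Lie groupoid axioms then hold because they hold locally. At this stage $\mon(F)$ is a Lie groupoid, a priori non-Hausdorff since a sequence of arrows supported in a shrinking chain of charts may limit to two distinct arrows differing by a nontrivial holonomy (a vanishing cycle).

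Next I would verify that the inclusions $\mon(F\bar_U)\to\mon(F)$ are morphisms of Lie groupoids: on objects they are open embeddings $U\hookrightarrow M$, on arrows they are the chart embeddings just constructed, and compatibility with the groupoid structure is immediate from the colimit description. Finally, for the universal property in $\LC$, suppose we are given smooth groupoid morphisms $f_U:\mon(F\bar_U)\to H$ compatible with chart inclusions. Since $\mon(F)$ is the colimit in $\set$-groupoids there is a unique groupoid morphism $f:\mon(F)\to H$ extending them; smoothness of $f$ on objects is local on $M$ (the $U$ cover $M$) and smoothness of $f$ on arrows is local on $\mon(F)_1$ (the chart images cover it), so $f$ is automatically smooth. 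Hence the colimit is valid in $\LC$.

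\textbf{Main obstacle.} The delicate point is the well-definedness and smoothness of the chart transitions on $\mon(F)_1$ — i.e.\ showing the overlap of two chart images is open and that the change of coordinates is a diffeomorphism. This requires knowing that the holonomy $\gamma^g_{VU}$ depends smoothly on $g$ as $g$ varies in a chart image (not just that each individual germ is a diffeomorphism), and that two arrows close to a given $g$ and lying in the same target chart are related by the same holonomy germ; this is the usual subtlety that the holonomy cocycle is locally constant along plaques, which I would extract from the invariance of $\gamma^g_{VU}$ under the three elementary moves of Lemma \ref{lemma:paths} together with the local product structure of foliated charts. Everything else (submersivity of source/target, smoothness of multiplication, the universal property) then follows by locality.
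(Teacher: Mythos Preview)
There is a genuine gap in your atlas for $\mon(F)_1$: the images of the local monodromy groupoids $\mon(F\bar_U)_1$ do \emph{not} cover $\mon(F)_1$. An arrow in $\mon(F\bar_U)_1 = U\times_{\R^q}U$ is a pair of points lying on a common plaque of $U$, and its image in $\mon(F)$ is the class of the short segment joining them inside that plaque. Any arrow $g\in\mon(F)$ represented by a long leafwise path --- say a nontrivial loop in a leaf with nonzero $\pi_1$, or simply a path whose endpoints lie in no common foliated chart --- is not of this form and sits in none of your chart domains. (The paper in fact remarks that the images of the $\mon(F\bar_U)$ cover only ``a neighborhood of the identities''.) So what you have written down is not an atlas, and the obstacle you flag --- smoothness of the transitions between two single-chart images --- is not where the difficulty lies; those overlaps are rather tame, since both sides consist of short plaque-segments.

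What the paper does instead is construct a chart around \emph{every} arrow $g$, not just those near an identity. One fixes a discrete-path representative $(g_k,\dots,g_1)$ of $g$ through a chain of foliated charts $U_1,\dots,U_k$, realizes the holonomy germ $\gamma^g_{U_kU_1}$ on a small transverse ball $B$, and takes as chart domain the set of all discrete paths running through the \emph{same} chain $(U_1,\dots,U_k)$ with transverse coordinate of the initial point in $B$; the chart map records the endpoints in $\phi_1$ and $\phi_k$. The transition between two such charts at $g$ is then governed by the elementary moves of Lemma~\ref{lemma:paths}, and this is where the smooth dependence of holonomy on the arrow actually enters. Once this atlas is in place, your argument for the universal property is correct and matches the paper's: the inclusions $\mon(F\bar_U)\hookrightarrow\mon(F)$ are open embeddings covering a neighborhood of the identities, and a groupoid map out of a Lie groupoid is smooth as soon as it is smooth there.
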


\begin{proof}
Given $y\xfrom g x$ in $\mon(F)$, we show now how to construct a foliated chart around $g$. Regard $g$ as the class of a discrete path $(g_k,\dots,g_1)$ with $g_i=(y_i\from x_i)\in\mon(F\bar_{U_i})$, and realize  the germ $\gamma^g_{U_kU_1}$ as a diffeomorphism $B\to\gamma^g_{U_kU_1}(B)$ with $(\phi_1)_2(x)\in B\subset\R^q$ an open ball. Then we can set $(\tilde U,\tilde\phi)$ a chart for $\mon(F)$ around $g$ by
$$\tilde U=\{(g'_k,\dots,g'_1): x'_i\xfrom{g'_i}x'_{i-1}\in \mon(F\bar_{U_i}),\phi_2(x'_1)\in B\}$$
and by $\tilde\phi:\tilde U\to (\R^p\times\gamma^g_{U_kU_1}(B))\times_{B}(\R^p\times B)$, 
$g'\mapsto(\phi_k(x'_k),\phi_1(x'_0))$, where the fiber product over $\pi_2(\gamma^g_{U_kU_1})^{-1}$ and $\pi_2$ is an Euclidean open of dimension $2p+q$. Given two such charts and fixing an arrow $g$ on that intersection, it is straightforward to check that the three elementary moves lead to a smooth transition map, well-defined around $g$, from where the result easily follows.

It easily follows that the inclusions $\mon(F\bar_U)\to\mon(F)$ are open embeddings covering a neighborhood of the identities. The colimit is valid within the category of Lie groupoids because a groupoid map $\mon(F)$ is smooth if and only if it is so in such a neighborhood.
\end{proof}


If $x\xfrom g x$ is an arrow in $\mon(F)$ and $(U,\phi)$ is a foliated chart around $x$, then we can build a chart $(\tilde U,\tilde\phi)$ around $g$ as above, by using $(U,\phi)$ as the initial and final foliated chart, and then
$$\tilde\phi(\tilde U\cap I(\mon(F)))=
\{(x,\gamma^g_{UU}(y),x,y)\}\cap\{(x,y,x,y)\}\subset (\R^p\times\gamma^g_{U_kU_1}(B))\times_{B}(\R^p\times B),$$
where $I(\mon(F))$ denotes the isotropy of $\mon(F)$. It follows that $s:I(\mon(F))\to M$ is always locally injective, and it is locally surjective at $g$ if and only if $\gamma^g_{UU}$ is trivial. Based on this, we say that $g$ has {\bf trivial holonomy} if $s:I(\mon(F))\to M$ is locally bijective at $g$. The arrows with trivial holonomy $K^h\subset\hol(F)$ define a swind subgroupoid, so the quotient of $\mon(F)$ by $K^h$ is a well-defined {\bf holonomy groupoid} $\hol(F)\toto M$, and the projection $\mon(F)\to\hol(F)$ is a Lie equivalence, see Proposition \ref{prop:kernel}. While $\mon(F)$ is source-simply connected, $\hol(F)$ is just source-connected and its isotropy groups are the {\bf holonomy groups} $\hol_x(F)$. Two paths $g,g'$ with the same initial and final points have the same holonomy if they induce the same diffeomorphism on small transversals.

\begin{proposition}[cf. {\cite[Prop. 1]{cm}}]\label{prop:min-max}
If $G$ is a source-connected Lie groupoid integrating $F$, then 
the canonical projection $\mon(F)\to\hol(F)$ factors through $G$.
\end{proposition}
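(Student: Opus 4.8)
The plan is to factor the canonical projection $\pi^h:\mon(F)\to\hol(F)$ as $\mon(F)\xrightarrow{\ \Phi\ }G\xrightarrow{\ \bar\pi\ }\hol(F)$. First I would produce a Lie equivalence $\Phi:\mon(F)\to G$ over $\id_M$. Since $\mon(F)=\colim_{(U,\phi)}\mon(F\bar_U)$ and this colimit is valid in the category of Lie groupoids (Proposition \ref{prop:mon-hol}), it is enough to construct compatible smooth morphisms $\Phi_U:\mon(F\bar_U)\to G$ for every foliated chart $(U,\phi)$. Now $\mon(F\bar_U)=U\times_{\R^q}U$ is source-simply connected (its source fibers are the plaques, each diffeomorphic to $\R^p$), and because $G$ integrates $F$ its anchor is the inclusion $TF\hookrightarrow TM$, which is injective; hence the isotropy groups of $G$ are discrete and each target map $t:G(-,x)\to L_x$ is a covering. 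Over the simply connected plaque through $x$ there is then a unique lift of the inclusion $U\hookrightarrow M$ sending $x$ to $1_x$, and by the smooth dependence on parameters in covering theory these lifts assemble into a smooth groupoid morphism $\Phi_U$ integrating the inclusion $TF\bar_U\hookrightarrow TF=A_G$; equivalently, $\Phi_U$ is the unique integration of this algebroid morphism given by classical Lie 2, which applies since $\mon(F\bar_U)$ is Hausdorff and source-simply connected. Uniqueness forces the $\Phi_U$ to be compatible with chart inclusions, so they glue to $\Phi:\mon(F)\to G$; as $A_\Phi$ restricts to the inclusion on each $TF\bar_U$ we get $A_\Phi=\id_{TF}$, so $\Phi$ is a Lie equivalence.

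Next I would compute the kernel $K:=\Phi^{-1}(M)\subset\mon(F)$. By Proposition \ref{prop:lie-equiv} the map $\Phi$ is \'etale on arrows and on isotropies, so $K$ is a swind subgroupoid (smooth, wide, intransitive, with discrete isotropy); moreover the source fibers map by surjective covering maps and $G$ is source-connected, so $\Phi$ is surjective and $G\cong\mon(F)/K$ by Godement, as in Proposition \ref{prop:kernel}.

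The crux is the inclusion $K\subseteq K^h$, where $K^h\subset\mon(F)$ is the subgroupoid of arrows with trivial holonomy. Let $g\in K$; since $K$ is intransitive this is an isotropy arrow $x\xfrom g x$ with $\Phi_1(g)=1_x$. As $\Phi_1$ is a local diffeomorphism and $u(M)\subset G_1$ is an embedded submanifold, $K=\Phi_1^{-1}(u(M))$ is carried, near $g$, diffeomorphically onto a neighbourhood of $u(x)$ in $u(M)$; composing with $s:u(M)\to M$ and using that $\Phi$ is a morphism over $\id_M$ (so $s\circ\Phi_1=s$) shows that $s:K\to M$ is a local diffeomorphism at $g$, in particular locally surjective there. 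Since $K\subseteq I(\mon(F))$, it follows that $s:I(\mon(F))\to M$ is locally surjective at $g$; it is always locally injective, as noted before the statement, hence locally bijective at $g$, which by definition means $g$ has trivial holonomy, i.e.\ $g\in K^h$. Therefore $K\subseteq K^h$, so $\pi^h$ is constant on the fibres of $\Phi$ (these are the right $K$-cosets inside the source fibers) and, $\Phi$ being a surjective submersion, $\pi^h$ descends to a smooth morphism $\bar\pi:G\cong\mon(F)/K\to\hol(F)$ with $\bar\pi\circ\Phi=\pi^h$, the required factorization.

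I expect the main difficulty to be the construction of $\Phi$ --- producing the local lifts $\Phi_U$ and checking they are smooth groupoid morphisms, i.e.\ ``Lie 2 for foliated charts'' --- after which the remaining steps are short and rely only on Propositions \ref{prop:lie-equiv} and \ref{prop:kernel} and on the description of trivial holonomy through the local behaviour of $s:I(\mon(F))\to M$.
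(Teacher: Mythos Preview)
Your proposal is correct and follows essentially the same architecture as the paper's proof: build a Lie equivalence $\Phi:\mon(F)\to G$ via the colimit description, then show $\ker\Phi\subset K^h$ by checking that $s:\ker\Phi\to M$ is \'etale, hence $s:I(\mon(F))\to M$ is locally surjective at each point of $\ker\Phi$.

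The only noteworthy difference is in the construction of the local morphisms $\Phi_U$. You appeal to covering theory (or classical Lie~2) to lift plaques into source fibers of $G$. The paper instead observes directly that the map $(t,s):(G_U)^0\to\mon(F\bar_U)$ is a Lie equivalence into a source-simply-connected groupoid, hence an isomorphism by Proposition~\ref{prop:lie-equiv}; composing its inverse with the inclusion $(G_U)^0\hookrightarrow G$ gives $\Phi_U$ with no further work. This is slightly cleaner and avoids any forward reference to Lie~2 (which in the paper's ordering comes later), and it makes the groupoid-morphism property of $\Phi_U$ immediate rather than something to be checked. Your covering-space argument is of course also valid and self-contained.
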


\begin{proof}
Given $(U,\phi)$ a foliated chart, the orbits of $(G_U)^0$ are the horizontal plaques $\phi^{-1}(\R^p\times y)$, the map $(t,s): (G_U)^0\to \mon(F\bar_U)$ is a Lie equivalence, and since the source-fibers of $\mon(F\bar_U)$ are simply connected, $\psi$ is an isomorphism by \ref{prop:lie-equiv}. The inclusions 
$\mon(F\bar_U)\cong(G_U)^\circ\to G$ induce a morphism from the colimit $\phi:\mon(F)\to G$ preserving the underlying foliation, hence being a  Lie equivalence. 
Since $G$ is source-connected, it follows from \ref{prop:kernel} that $\phi$ is surjective and $G=\mon(F)/\ker\phi$. To get the morphism $G\to\hol(F)=\mon(F)/K^h$ we need to show that $\ker\phi\subset K^h$, namely that the arrows in $\ker(\phi)$ have trivial holonomy, which by definition means that $s:{I(\mon(F))}\to M$ is locally surjective at every $g\in \ker(\phi)$. This follows from the inclusion $\ker(\phi)\subset I(\mon(F))$ and the fact that $s:\ker(\phi)\to M$ is already a surjective submersion.
\end{proof}


As we have seen, the monodromy and the holonomy groupoid are always Lie groupoids, though the manifold of arrows may be non-Hausdorff. Let us illustrate with some simple examples.

\begin{example}\label{ex:monodromy}
Let $F$ be the foliation on $M=\R^3\setminus 0$ by horizontal planes. $F$ is simple, its leaves are the fibers of $(x,y,z)\mapsto z$, there is no holonomy, and $\hol(F)$ is a submersion groupoid, hence Hausdorff.
On the other hand, $\mon(F)$ is non-Hausdorff, for $u(1,0,1/n)\in\mon(F)$ converges to any element of the isotropy at $(1,0,0)$, that is isomorphic to $\mathbb Z$.
\end{example}

\begin{example}\label{ex:holonomy}
Write $f:\R\to\R$ for the smooth map given by $f(t)=e^{-1/t}$ if $t>0$ and $f(t)=0$ otherwise. 
Let $M=\R\times S^1$ be the cylinder with coordinates $t,r$ and $F$ the 1-dimensional foliation spanned by the vector field
$X(t,r)=\frac{\partial}{\partial r} + f(t)\frac{\partial}{\partial t}$. Then $\mon(F)$ is Hausdorff, it is the action groupoid $\R\ltimes M$ given by the flow of $X$, while $\hol(M)$ is not Hausdorff at the origin.
\end{example}

We close with an example of a foliation that does not admit a Hausdorff integration.

\begin{example}\label{ex:hol=mon}
Let $M=\R^3\setminus\{(0,0,z):z\geq 0\}$, and let $F$ be the foliation given by the 1-form $\frac{f(z)y}{x^2+y^2}dx - \frac{f(z)x}{x^2+y^2}dy + dz$. The leaves of $F$ on $z<0$ are the horizontal planes, and on $z>0$ are spirals spanned by the vector fields $x\frac{\partial}{\partial x}+y\frac{\partial}{\partial y}$ and $-y\frac{\partial}{\partial x}+x\frac{\partial}{\partial y}+f(z)\frac{\partial}{\partial z}$.  Then $\mon(F)$ is non-Hausdorff, as the non-trivial loops at $z=0$ are in the closure of the units. And since these are all the non-trivial loops and they have non-trivial holonomy, $K^h=M$ and $\hol(F)=\mon(F)$. It follows from Proposition \ref{prop:min-max} that $F$ does not admit any Hausdorff integration.
\end{example}


\section{A maximal Hausdorff integration}


We present here our first main contribution, which is a Hausdorff version of Lie 1. We start reviewing the classic version, establish the new result, characterize the associated kernel via vanishing cycles, and illustrate with several examples.

\medskip


Given $G$ a Lie groupoid, Lie 1 ensures the existence of a maximal Lie equivalence $\tilde G\to G$ \cite{mmbook}. We restate it here by elucidating the universal property that $\tilde G$ satisfies. And we give a proof based on the monodromy and holonomy groupoids.

\begin{proposition}[Lie 1]
\label{prop:lie1}
Given $G$ a Lie groupoid, there exists a Lie groupoid $\tilde G$ and a universal Lie equivalence $\tilde\phi:\tilde G\to G$, in the sense that for any other Lie equivalence $\phi':G'\to G$ there exists a unique factorization $\tilde\phi=\phi'\phi$. 
$$\xymatrix{
\tilde G \ar[r]^{\exists ! \phi} \ar[d]_{\tilde\phi}  & G' \ar[dl]^{\forall \phi'}  \\ G
}$$
\end{proposition}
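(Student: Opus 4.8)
The plan is to construct $\tilde G$ as the maximal source-simply-connected integration of the Lie algebroid $A_G$ and to obtain $\tilde\phi$ from the universal properties already developed for monodromy groupoids. Concretely, I would first reduce to the source-connected case: by the discussion preceding Proposition~\ref{prop:lie-equiv}, the inclusion $G^0\to G$ is a Lie equivalence, and a Lie equivalence $\phi':G'\to G$ restricts to a Lie equivalence $G'^0\to G^0$, so it suffices to produce a universal Lie equivalence over $G^0$ and then compose with $G^0\to G$. Thus assume $G$ is source-connected.

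Next I would take $F$ to be the foliation of $M$ by the orbits of $G$ (the singular foliation induced by the anchor of $A_G$), and set $\tilde G=\mon(F)$ when $A_G=TF$ is the tangent algebroid; in general, since we only know the excerpt handles foliation algebroids explicitly via Section~\ref{sec:holonomy}, I would instead invoke the classical Lie~1 construction of the source-simply-connected integration $\tilde G$ of $A_G$ (the Weinstein groupoid / Crainic--Fernandes model, as in \cite{mmbook,cf1}), whose source-fibers are simply connected by construction, and which comes with a canonical Lie equivalence $\tilde\phi:\tilde G\to G$ integrating $\id_{A_G}$. For the key universality step, let $\phi':G'\to G$ be any Lie equivalence with $G'$ source-connected. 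Then $A_{\phi'}:A_{G'}\to A_G$ is an isomorphism, so by the same classical Lie~1 applied to $G'$, the canonical map $\tilde{G'}\to G'$ is a Lie equivalence; but $\tilde{G'}$ and $\tilde G$ both integrate $A_G$ (via $A_{\phi'}$) and are source-simply connected, hence canonically isomorphic by the maximality remark following Proposition~\ref{prop:lie-equiv} (a Lie equivalence from a source-simply-connected groupoid to another is an isomorphism once both are source-simply connected). Composing $\tilde G\cong\tilde{G'}\to G'$ gives the desired $\phi:\tilde G\to G'$, and $\phi'\phi=\tilde\phi$ follows because both sides are Lie groupoid morphisms integrating $\id_{A_G}$ out of a source-connected (even source-simply-connected) groupoid, hence equal by Lie~2 uniqueness; the same uniqueness argument gives uniqueness of $\phi$.

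The main obstacle I anticipate is making the "factorization" precise without circularity: one must know that a morphism out of a source-simply-connected groupoid is determined by its infinitesimal part (a uniqueness form of Lie~2) and that $\tilde G$ represents the functor "integrations of $A_G$ equipped with a Lie equivalence to $G$". I would handle this by spelling out that $\phi$ is defined as the composite of the canonical iso $\tilde G\to\widetilde{G'}$ (existence and uniqueness of which is exactly Proposition~\ref{prop:lie-equiv}(iii) together with source-simple-connectivity of both source-fibers) with the canonical Lie equivalence $\widetilde{G'}\to G'$, so existence is automatic and uniqueness reduces to: two maps $\phi_1,\phi_2:\tilde G\to G'$ with $\phi'\phi_1=\phi'\phi_2=\tilde\phi$ agree, which holds because $\phi'$ is a local diffeomorphism (Proposition~\ref{prop:lie-equiv}(i)) that is injective on each source-fiber, and $\tilde G$ is source-connected, so $\phi_1=\phi_2$ on a neighborhood of the units and hence everywhere. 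A secondary point worth a sentence is smoothness of $\phi$, which follows since it is a groupoid morphism agreeing with a smooth map near the identities and $\tilde G$ is source-connected, exactly as in the proof of Proposition~\ref{prop:mon-hol}.
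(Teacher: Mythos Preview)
Your approach is quite different from the paper's and, as written, has two concrete problems.

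First, the uniqueness step contains a false claim: you assert that $\phi'$ ``is injective on each source-fiber'', but Proposition~\ref{prop:lie-equiv}(iii) only gives that $\phi'$ restricted to source-fibers is a \emph{covering map}, not injective. The right way to salvage uniqueness is the covering-lift argument you almost have: $\phi_1,\phi_2:\tilde G\to G'$ restricted to each source-fiber $\tilde G(-,x)$ are lifts of $\tilde\phi|_{\tilde G(-,x)}$ along the covering $\phi'|_{G'(-,x)}:G'(-,x)\to G(-,x)$ that agree at $u(x)$, hence agree everywhere by connectedness of $\tilde G(-,x)$.

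Second, and more structurally, your construction is essentially an appeal to the very statement being proved: you ``invoke the classical Lie~1 construction'' for both $\tilde G$ and $\tilde{G'}$, and you justify $\phi'\phi=\tilde\phi$ and uniqueness by ``Lie~2 uniqueness''. The paper's purpose here is precisely to give a self-contained proof based on Section~\ref{sec:holonomy}, and Lie~2 (Proposition~\ref{prop:lie2}) appears later and in fact uses Lie~1 in its proof, so this is circular within the paper's logical order. Your attempt to identify $\tilde G\cong\tilde{G'}$ via ``both are source-simply-connected integrations'' also implicitly needs either Lie~2 or a concrete construction depending only on $A_G$.

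For comparison, the paper's argument is entirely internal: it foliates $G$ by source-fibers $F^s$, sets $\tilde G=\mon(F^s)/G$ (with the quotient by right multiplication well-defined via Godement, Lemma~\ref{principal}), observes $\hol(F^s)/G=G$ since $F^s$ has no holonomy, and for any Lie equivalence $\phi':G'\to G$ forms the fibered product $G'\times_G\hol(F^s)$, which is another source-connected integration of $F^s$; then Proposition~\ref{prop:min-max} produces the unique map $\mon(F^s)\to G'\times_G\hol(F^s)$, and quotienting by $G$ gives the desired $\phi:\tilde G\to G'$. This avoids any forward reference to Lie~2 and any external construction of $\tilde G$.
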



\begin{proof}
Without loss of generality, we can assume $G$ source-connected.
Let $F^s$ be the foliation on $G$ by the source-fibers, which is invariant under the free $G$-action by right multiplication. Even though $G$ may be non-Hausdorff, it still makes sense to build the groupoids $\mon(F^s)$ and $\hol(F^s)$, as done in the previous section. Right multiplication induces $G$-actions on
$\mon(F^s)$ and $\hol(F^s)$ for which the source, target and  multiplicaction are $G$-equivariant. It follows from \ref{principal} that the actions are principal.
Then by Godement criterion \ref{prop:godement} the quotients are well-defined Lie groupoids with objects $G/G=M$. Since $F^s$ has no holonomy, $\hol(F^s)$ is the submersion groupoid induced by $s:G\to M$ and $\hol(F^s)/G = G$. Let $\tilde G$ be the quotient $\mon(F^s)/G$. The projection $\mon(F^s)\to\hol(F^s)$ induces a Lie equivalence $\tilde\phi:
\tilde G \to G$.
To see that it is universal, let $\phi':G'\to G$ be a Lie equivalence, and consider the Lie groupoid theoretic fibered product $G'\times_G \hol(F^s)$  \cite[Appendix]{bcdh}. Since $\phi'$ is a Lie equivalence, the same holds for the base-change morphism $G'\times_G \hol(F^s)\to\hol(F^s)$, so $G'\times_G \hol(F^s)$ integrates $F^s$. Then Proposition \ref{prop:min-max} gives a uniquely defined Lie equivalence 
$\mon(F^s)\to G'\times_G \hol(F^s)$, which modding out by the $G$-action gives the desired map
$\phi:\tilde G\to G'$.
\end{proof}


Note that since $\tilde\phi$ and $\phi'$ are Lie equivalences, the same holds for $\phi$. The source-fibers of $\tilde G$ identify with those of $\mon(F^s)$, from where it is clear that $\tilde G$ is source-simply connected.
Our proof exploits that every Lie groupoid is the quotient of a holonomy groupoid. The infinitesimal analog to this statement will play a key role in the next section.



\begin{theorem}[Hausdorff Lie 1]
\label{thm:HLie1} 
Given $G$ a Hausdorff Lie groupoid, there exists a Hausdorff Lie groupoid $\hat G$ and a universal Lie equivalence $\hat\phi:\hat G\to G$, in the sense that for any other Lie equivalence $\phi':G'\to G$ with $G'$ Hausdorff, there exists a unique factorization $\hat\phi=\phi'\phi$. 
$$\xymatrix{
\hat G \ar[r]^{\exists ! \phi} \ar[d]_{\hat\phi}  & G' \ar[dl]^{\forall \phi'}  \\ G
}$$
\end{theorem}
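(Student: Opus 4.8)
The plan is to mimic the structure of the proof of classic Lie 1 (Proposition \ref{prop:lie1}), but instead of passing to the source-simply-connected integration $\tilde G$ we will pass to a maximal \emph{closed} swind subgroupoid of $\tilde G$, invoking Proposition \ref{prop:kernel} to relate closed swind kernels with Hausdorff Lie equivalences. First I would reduce to the case $G$ source-connected, replacing $G$ by $G^0$ (which is Hausdorff since it is open in $G$, and the object manifold is already assumed Hausdorff). Then I would take $\tilde\phi:\tilde G\to G$ the source-simply-connected integration from Proposition \ref{prop:lie1}. By Proposition \ref{prop:kernel}, Hausdorff source-connected Lie equivalences $\phi':G'\to G$ correspond to \emph{closed} swind subgroupoids of $\tilde G$: given such a $\phi'$, universality of $\tilde G$ gives a unique Lie equivalence $\psi:\tilde G\to G'$ with $\phi'\psi=\tilde\phi$, and $\ker\psi\subset\tilde G$ is swind; it is closed precisely because $G'$ is Hausdorff (again Proposition \ref{prop:kernel}). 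Conversely every closed swind $K\subset\tilde G$ produces a Hausdorff Lie equivalence $\tilde G\to\tilde G/K\to G$.

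The key step is then to produce a \emph{minimal} closed swind subgroupoid $\hat K\subset\tilde G$ and set $\hat G=\tilde G/\hat K$. The natural candidate is the intersection $\hat K=\bigcap_{G'}\ker\psi_{G'}$ over all Hausdorff source-connected Lie equivalences $\phi':G'\to G$, or equivalently (and more tractably) $\hat K=\overline{M}\subset\tilde G$, the closure of the unit manifold inside $\tilde G$. One should check that $\overline{M}$ is a swind subgroupoid: it is wide and closed by construction; it is normal because conjugation is continuous and preserves $M$; it is contained in the isotropy $I(\tilde G)$ because $s,t$ are continuous and agree on $M$ which is Hausdorff, so they agree on $\overline M$, giving intransitivity; and its isotropy groups are discrete because $\tilde G$ is source-simply connected, so each source-fiber $\tilde G(-,x)$ is Hausdorff and simply connected, forcing $\overline M\cap \tilde G(-,x)$ to be discrete (this is where source-simple-connectedness is essential — it is exactly the ``no vanishing cycles'' phenomenon alluded to after the theorem). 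Smooth embeddedness then follows as in the proof of Proposition \ref{prop:lie-equiv}/\ref{prop:kernel}, by splitting locally over the orbit using the principal bundle structure of the source-fibers. Granting this, $\hat G=\tilde G/\overline M$ is a Hausdorff Lie groupoid and $\hat\phi:\hat G\to G$ is a Lie equivalence.

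For universality: given any Hausdorff Lie equivalence $\phi':G'\to G$, after restricting to $G'^0$ we get the unique factorization $\psi:\tilde G\to G'$ of $\tilde\phi$, with $\ker\psi$ closed swind; since $\ker\psi$ is closed and wide it contains $\overline M$, so $\psi$ descends to a map $\hat\phi':\hat G=\tilde G/\overline M\to G'$ with $\phi'\hat\phi'=\hat\phi$. Uniqueness of the factorization follows from surjectivity of $\tilde G\to\hat G$ (it is a quotient map) together with the uniqueness already built into $\psi$. Finally one notes that if $G'$ was not source-connected this only affects matters up to the open inclusion $G'^0\hookrightarrow G'$, and the factorization through $G'$ still exists and is unique because $\hat G$ is source-connected so its image lands in $G'^0$.

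The main obstacle I anticipate is verifying that $\overline M\subset\tilde G$ is a \emph{smoothly embedded} submanifold with \emph{discrete} isotropy — i.e.\ that it is genuinely swind and not merely a closed normal wide subgroupoid. Discreteness of $\overline M\cap\tilde G(-,x)$ must use the simple connectedness of the source-fibers in an essential way (a closed subgroup of a simply connected Lie group that meets the identity component of... — more precisely, that a discrete-looking accumulation at the identity cannot happen in a Hausdorff simply connected source-fiber without creating a vanishing cycle), and turning this into smooth embeddedness requires the local-splitting argument over orbits. A secondary subtlety is checking that the \emph{set-theoretic} intersection of all relevant kernels equals $\overline M$ (so that $\hat G$ is literally maximal, not just ``a'' Hausdorff integration); this should follow because $\tilde G\to\tilde G/\overline M$ is already a Hausdorff Lie equivalence, so $\overline M$ is itself one of the kernels in the family, hence the smallest. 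Everything else is a routine transport of the Lie-1 argument along Proposition \ref{prop:kernel}.
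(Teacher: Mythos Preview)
Your overall framework is correct and matches the paper: reduce to the source-connected case, invoke Lie 1 to get $\tilde\phi:\tilde G\to G$ with closed kernel $\tilde K$, translate via Proposition \ref{prop:kernel} into the search for a minimal closed swind subgroupoid $\hat K\subset\tilde K$, and set $\hat G=\tilde G/\hat K$. The universality argument you sketch is also fine.

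The genuine gap is your candidate $\hat K=\overline M$. You correctly flag smooth embeddedness as an obstacle, but you overlook a more basic one: $\overline M$ need not be closed under multiplication, so it need not be a subgroupoid at all. Here is a concrete counterexample in the spirit of the paper's own Example 4.6. Let $N=\R^3\setminus(P_1\cup P_2)$ with $P_1=\{(0,0,z):z\leq 0\}$ and $P_2=\{(1,0,z):z\geq 0\}$, foliated by horizontal planes, and take $G=\hol(F)$ (a Hausdorff submersion groupoid) so that $\tilde G=\mon(F)$. The leaf at $z=0$ is $\R^2\setminus\{(0,0),(1,0)\}$ and $\tilde K$ at that leaf is the free group $F_2=\langle a,b\rangle$ on the two obvious loops. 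The loop $a$ vanishes as $z\to 0^+$ and $b$ vanishes as $z\to 0^-$, so $a,b\in\overline M$; but $ab$ maps to a nontrivial element of $\pi_1$ of every nearby leaf, so $ab\notin\overline M$. Thus $\overline M$ is not a subgroupoid. The same example shows $\overline M$ is not open in $\tilde K$: the local section of $\tilde K\to N$ through the arrow $a$ at $z=0$ hits the identity for $z>0$ but hits the nontrivial generator of $\pi_1(\R^2\setminus\{(0,0)\})$ for $z<0$, and the latter arrows are not in $\overline M$. (Incidentally, discreteness of $\overline M\cap\tilde G(-,x)$ is immediate from $\overline M\subset\tilde K$ and needs no appeal to simple-connectedness.)

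The paper sidesteps both issues by defining $\hat K$ from above, as the intersection of \emph{all} closed swind subgroupoids $K$ with $M\subset K\subset\tilde K$. This intersection is automatically a closed normal subgroupoid; the only thing to check is smoothness, and the paper's observation is that each such $K$ is both open (same dimension as $\tilde K$, embedded) and closed in $\tilde K$, hence a union of connected components of $\tilde K$, and therefore so is the intersection. Your $\overline M$ is always contained in the resulting $\hat K$, but the containment can be strict.
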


\begin{proof}
We can assume $G$ source-connected. Let $\tilde\phi:\tilde G\to G$ be the universal Lie equivalence given by Lie 1, and let $\tilde K\subset \tilde G$ be its kernel, which is closed by Proposition \ref{prop:kernel}.

Given a Lie equivalence $\phi':G'\to G$ from a Hausdorff groupoid $G'$ we denote by $K'\subset \tilde G$ the kernel of the factorization $\phi:\tilde G\to G'$, which is closed and is included in $\tilde K$. Then we can identify $G'=\tilde G/K'$, see Proposition \ref{prop:kernel}.
In particular, if $\hat\phi:\hat G\to G$ is a universal Lie equivalence, it follows from $\hat\phi=\phi'\phi$ that the kernel $\hat K$ of $\tilde G\to \hat G$ is included in $K'$, for every $K'$. Thus, to construct $\hat G$, we need a minimal closed swind subgroupoid $\hat K$ inside $\tilde K$. 

If $M\subset K\subset \tilde K$ is a subgroupoid then $K$ is automatically wide, intransitive and with discrete isotropy. Define $\hat K$ as the intersection of all the closed swind subgroupoid $K$ of $\tilde K$. The intersection is non-trivial, for at least $K=\tilde K$ is closed swind. Clearly $\hat K$ is closed and normal. To show that $\hat K$ is swind, we only need to prove that $\hat K$ is smooth, or equivalently open, as $M$ and $\tilde K$ are manifolds of the same dimension.
Given $K\subset \tilde K$ one of the groupoids we are intersecting, if $g\in\hat K$, and if $g\in U\subset\tilde K$ is a connected open neighborhood, then $U\subset K$ for every $K$, as both $K$ and $\tilde K\setminus K$ are open. It follows that $U\subset \hat K$, so $\hat K$ is open, as claimed.
 
Finally, since $\hat K$ is closed swind, the quotient $\hat G=\tilde G /\hat K$ is a Hausdorff Lie groupoid, again by Proposition \ref{prop:kernel}, and the map $\tilde G\to \hat G$ is a Lie equivalence, as well as $\hat G\to G$. By construction, the latter is universal among the Lie equivalences from a Hausdorff groupoid.
\end{proof}


$\hat K\subset\tilde K$ is both open and closed, and therefore it has to be a union of connected components. In particular, $\hat K$ must contain every component of $\tilde K$ intersecting $M$. 
We can think of arrows in $\tilde K$ as $G$-classes of (homotopy types of) loops within a leaf of the foliation $F^s$. 
A non-trivial loop $\alpha_0$ in a leaf $L_ 0$ is a 
{\bf vanishing cycle} if it can be extended to a continuous family $\alpha_t$, $0\leq t\leq 1$, such that $\alpha_t$ is a trivial loop on some leaf $L_t$ for all $t>0$. 
Every vanishing cycle of $F^s$ must belong to $\hat K$, and $F^s$ has no vanishing cycles if and only if $M$ is closed in $\tilde G$.

\begin{corollary}\label{cor:ssch}
Given $G$ a Hausdorff groupoid, the maximal integration $\tilde G$ is Hausdorff, namely $\tilde G=\hat G$, if and only if the foliation $F^s$ on $G$ has no vanishing cycles.
\end{corollary}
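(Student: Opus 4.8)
The plan is to reduce the statement to the chain of equivalences $\tilde G=\hat G\iff M\text{ is closed in }\tilde G\iff F^s\text{ has no vanishing cycles}$. For the first link I would use the construction of $\hat G$ in Theorem \ref{thm:HLie1}: we have $\hat G=\tilde G/\hat K$, where $\hat K$ is the intersection of all closed swind subgroupoids of the kernel $\tilde K$ of $\tilde\phi$, and the canonical projection $\tilde G\to\hat G$ is a Lie equivalence with kernel $\hat K$ by Proposition \ref{prop:kernel}, hence an isomorphism precisely when $\hat K=M$. Now $M$, regarded as a subgroupoid of $\tilde K$, is automatically wide, intransitive, normal, and has trivial (in particular discrete) isotropy, and the unit map is a smooth embedding; moreover, since $\tilde K$ is closed in $\tilde G$, the unit submanifold $M$ is closed in $\tilde K$ if and only if it is closed in $\tilde G$. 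So, if $M$ is closed in $\tilde G$ then $M$ itself is a closed swind subgroupoid of $\tilde K$, forcing $\hat K=M$; conversely, if $\hat K=M$ then $M=\hat K$ is closed. Combined with Lemma \ref{lemma:units-are-closed}, this gives $\tilde G=\hat G\iff M\text{ closed in }\tilde G\iff\tilde G\text{ Hausdorff}$.

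For the second link I would make precise the correspondence, already indicated before the statement, between nonunit arrows of $\tilde K$ lying in the closure of $M$ and vanishing cycles of $F^s$. If $M$ is not closed in $\tilde G$, then, arguing as in the proof of Lemma \ref{lemma:units-are-closed}, there is a sequence $u(x_n)\to g$ in $\tilde G$ with $g$ not a unit; since $\tilde\phi$ is the identity on objects and $G$ is Hausdorff, $\tilde\phi(g)\in M$, so $g\in\tilde K$. Reading $g$ as the $G$-class of a nontrivial loop in a leaf $L_0$ of $F^s$ and the $u(x_n)$ as trivial loops in leaves approaching $L_0$, and working inside a foliated chart of $F^s$ around $g$, one reparametrizes the transverse coordinate to produce a continuous family $\alpha_t$ that is a trivial loop for $t>0$ and equals $g$ at $t=0$, i.e.\ a vanishing cycle. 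Conversely, a vanishing cycle $\alpha_t$ (with $\alpha_0$ nontrivial on $L_0$ and $\alpha_t$ null-homotopic on $L_t$ for $t>0$) determines a path $t\mapsto[\alpha_t]$ in $\tilde G$ taking unit values for $t>0$ and the nonunit value $[\alpha_0]$ at $t=0$, so $M$ fails to be closed. Chaining the two equivalences proves the corollary.

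I expect the only genuine work to be in the second link: converting the limiting behaviour of a sequence of trivial loops in $\tilde G$ into an honest continuous one-parameter family of loops that is null-homotopic off the central leaf, and conversely. This amounts to carefully translating between the smooth structure on $\mon(F^s)$ furnished by the foliated charts of Proposition \ref{prop:mon-hol}, the $G$-quotient defining $\tilde G$, and the topological definition of a vanishing cycle; the first link, by contrast, is purely formal.
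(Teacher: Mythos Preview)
Your proposal is correct and takes essentially the paper's approach: the paper does not give a separate proof of the corollary, relying instead on the discussion paragraph immediately preceding it, and your two-link argument is a faithful expansion of that paragraph. You are right that the second link carries the content; the paper leaves the translation between the smooth structure on $\mon(F^s)$ and the topological definition of a vanishing cycle implicit, exactly as you anticipate.
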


The next examples show that $\hat G$ can be either $\tilde G$ or $G$ or something in between.

\begin{example}
Given $G$ a Hausdorff groupoid, if the source map $s:G\to M$ is trivial, as in the case of groupoids arising from Lie group actions $K\action M$, and more generally, if $s$ is locally trivial, as in the case of strict linearizable groupoids (cf. \cite{dhf1}), then there are no vanishing cycles on $F^s$, and therefore, the universal groupoid $\tilde G$ is Hausdorff and equal to $\hat G$.
\end{example}

\begin{example}
Let $G$ be the holonomy groupoid of the foliation $F$ by horizontal planes on $\R^3\setminus 0$. In Example \ref{ex:monodromy} we see that
$G$ is Hausdorff and that $\tilde G=\mon(F)$ is not. The only non-identity arrows in the kernel $\tilde K$ are the non-trivial loops in the leaf $z=0$, and they are all vanishing cycles, so $\hat K=\tilde K$ and the universal Hausdorff groupoid $\hat G$ is in this case equal to $G$.
\end{example}

\begin{example}
We can modify the previous example by considering the foliation $F$ by horizontal planes on $\R^3\setminus(\{0\}\cup L)$, where $L$ is a vertical line other than the $z$-axis, and setting $G=\hol(F)$. Then $G$ is Hausdorff, $\tilde G=\mon(F)$ is non-Hausdorff, and $\hat G$ does not agree with $G$ nor $\tilde G$. The kernel $\hat K$ has the non-trivial loops coming from the missing point, but it does not contain the non-trivial loops corresponding to the missing line.
\end{example}

\section{Integrating Lie subalgebroids}

We generalize now the results on the integration of Lie subalgebroids from \cite{mm-s} to the non-wide case, namely when the subalgebroid is defined over a proper submanifold. This extension will be used later to give a nice conceptual proof of Lie 2 that is adaptable to the Hausdorff case.

\medskip 
 

Given $G$ a Lie groupoid, the foliation $F^s$ on $G$ by s-fibers can be seen as the pullback vector bundle of its algebroid $A_G$ through the target map, or alternatively, we can see $A_G$ as the quotient of $F^s$ under the action of $G\action G\xto t M$ by right multiplication, which keeps this foliation invariant. This way every Lie algebroid is the quotient of a foliation. 
$$\xymatrix{
F^s \ar[r] \ar@{=>}[d] & A_G \ar@{=>}[d] \\ G_1 \ar[r]^t & M
}$$

\begin{proposition}\label{prop:subalgbd}
Given $G\toto M$ a Lie groupoid and $S\subset M$, there is a 1-1 correspondence between Lie subalgebroids $B\then S$ of $A_G\then M$ and $G$-invariant foliations $F$ on $t^{-1}(S)\subset G$.
\end{proposition}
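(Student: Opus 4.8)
The plan is to produce a dictionary between Lie subalgebroids $B \then S$ of $A_G \then M$ and $G$-invariant foliations $F$ on $t^{-1}(S) \subset G$, passing through the correspondence $F^s \leftrightarrow A_G$ already sketched above: $F^s$ is the pullback of $A_G$ along $t$, or equivalently $A_G = F^s / G$ for the right-multiplication action of $G \action (G \xto t M)$. I would first set up notation: write $\pi : t^{-1}(S) \to S$ for the restriction of $t$, and observe that $t^{-1}(S)$ is a smooth (embedded) submanifold of $G$ because $t$ is a submersion and $S$ is embedded, so the restricted foliation $F^s|_{t^{-1}(S)}$ makes sense, is still $G$-invariant (the $G$-action by right multiplication preserves $t$, hence preserves $t^{-1}(S)$ and restricts to a free principal action there), and its leaf space over $S$ recovers $A_G|_S$.

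\smallskip

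The forward direction: given a Lie subalgebroid $B \then S$ of $A_G \then M$, pull it back along $\pi = t|_{t^{-1}(S)}$ to get a vector subbundle $\pi^* B \subset \pi^* A_G = F^s|_{t^{-1}(S)}$. I would check this subbundle is involutive, hence a foliation $F$: involutivity can be verified because the bracket on $F^s$ (as a foliation, i.e. the Lie bracket of vector fields tangent to the s-fibers) is $t$-related, in the appropriate sense, to the anchor/bracket structure making $A_G$ an algebroid, and $B$ being a subalgebroid means precisely that its sections are closed under the bracket and its anchor lands in $TS$; translating this through the pullback gives that sections of $\pi^* B$ (locally of the form pullbacks of sections of $B$, multiplied by functions) are closed under the Lie bracket along the s-fibers. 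The resulting $F$ is $G$-invariant by construction since $\pi^* B$ is assembled $G$-equivariantly. Conversely, given a $G$-invariant foliation $F$ on $t^{-1}(S)$ with $F \subset F^s|_{t^{-1}(S)}$ --- note any foliation here is automatically contained in $F^s|_{t^{-1}(S)}$ only if we also demand $F$ refine the s-fibers; I should state the correspondence with that understanding, or observe that $G$-invariance together with the transverse geometry forces $F \subset TF^s$ --- one quotients by the free principal $G$-action to obtain, via Godement (Proposition \ref{prop:godement}, item \ref{principal}), a smooth vector subbundle $B = F/G \subset A_G|_S = (F^s|_{t^{-1}(S)})/G$ over $S$; the algebroid structure (anchor, bracket) descends because $F$ is involutive and $G$-invariant, exactly as $A_G$ itself arises as such a quotient.

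\smallskip

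The two constructions are mutually inverse essentially by the uniqueness in descent along principal actions (Godement): pulling back $B = F/G$ along $t$ and quotienting returns $F$, and vice versa, since $\pi^*$ and $(-)/G$ are inverse bijections between $G$-invariant subbundles of $F^s|_{t^{-1}(S)}$ and subbundles of $A_G|_S$, and they match involutive objects with subalgebroid objects under the anchor/bracket correspondence. I would also record that a subbundle of $A_G$ over $S$ is a subalgebroid iff its anchor takes values in $TS$ --- this condition corresponds on the $G$-side to the leaves of $F$ being tangent to $t^{-1}(S)$, i.e. to $F$ genuinely being a foliation of the manifold $t^{-1}(S)$, which is built into our setup.

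\smallskip

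The main obstacle I anticipate is the bracket/involutivity translation in the non-wide case: one must be careful that the anchor of $B$ lands in $TS$ and not merely in $TM|_S$, and dually that the $G$-invariant distribution $F$ on $t^{-1}(S)$ is tangent to $t^{-1}(S)$ and integrable there --- in the wide case $S = M$ this is automatic, but over a proper submanifold the compatibility of $F$ with $\pi = t|_{t^{-1}(S)}$ needs checking. Once it is clear that $t^{-1}(S)$ is embedded, $F^s|_{t^{-1}(S)}$ is a well-defined $G$-invariant foliation with quotient $A_G|_S$, and that subalgebroid structures are exactly the involutive-and-anchor-compatible subbundles, the correspondence follows from the pullback/quotient adjunction for the principal $G$-action; the rest is routine.
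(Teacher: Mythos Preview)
Your proposal is correct and follows the same route as the paper: pull $B$ back along $t$ to obtain $F$, quotient $F$ by the principal right $G$-action via Godement and Lemma~\ref{principal} to recover $B$, and verify these are mutually inverse. The only difference is that the paper phrases the forward step as the Lie-algebroid fibered product $F = F^s \times_{A_G} B$ and replaces your direct involutivity check with the one-line observation that any subalgebroid of a foliation (injective anchor) is itself a foliation; your worry about whether $F \subset F^s$ is automatic is legitimate, and the paper simply assumes it tacitly.
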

 
\begin{proof}
Given $B\then S$ a Lie subalgebroid, we can compute the Lie-algebroid-theoretic fibered product between the projection $F^s\to A_G$ and the inclusion $B\to A_G$ (cf. \cite{bcdh}), which turns out to be a subalgebroid $t^*B=F$ of $F^s$. Since a subalgebroid of a foliation must be a foliation, $F$ is a foliation itself, and since $F^s$ is $G$-invariant for the right multiplication, so does $F$.
$$\xymatrix@R=10pt@C=10pt{
t^*B=F \ar[rr]\ar@{=>}[dd] \ar[rd] & & B=F/G \ar@{=>}'[d][dd] \ar[rd] & \\
 & F^s \ar[rr] \ar@{=>}[dd]  & & A_G \ar@{=>}[dd] \\
t^{-1}(S) \ar'[r][rr] \ar[rd] & & S \ar[rd] & \\ 
& G \ar[rr] & & M}
$$

Conversely, given $F$ a $G$-invariant foliation on $t^{-1}(S)$, 
since the maps $F\to t^{-1}(S)\to G$ are $G$-equivariant, the actions $F\curvearrowleft G$ and $t^{-1}(S)\curvearrowleft G$  are both principal by \ref{principal}. Then by Godement criterion \ref{prop:godement} we get a quotient vector bundle $B=F/G$ over $S=t^{-1}(S)/G$, which inherits the structure of a Lie algebroid, by identifying its sections with right-invariant vector fields. The inclusion $F\subset F^s$ yields an algebroid inclusion in the quotients $B\subset A_G$.
It is straightforward to check that these two constructions are mutually inverse.
\end{proof}


Let $G\toto M$ be a Lie groupoid, $B\then S$ a non-necessarily wide subalgebroid of $A_G\then M$, and $F^B$ the corresponding $G$-invariant foliation on $t^{-1}(S)$.
The principal action $G\action t^{-1}(S)$ induces principal actions $G\action\mon(F^B)$ and $G\action\hol(F^B)$ by \ref{principal}, and by Godement criterion \ref{prop:godement} there  are Lie groupoid quotients, which in the notations of \cite{mm-s}, are $H_{max}$ and $H_{min}$.
$$H_{max}=\mon(F^B)/G  \qquad H_{min}=\hol(F^B)/G$$


Next we extend Proposition \ref{prop:min-max} and
generalize some results from \cite{mm-s} to the non-wide case.  

\begin{proposition}\label{prop:min-max-2}
The Lie groupoids $H_{max}$ and $H_{min}$ are integrations of $B\then S$. The canonical map $H_{max}\to H_{min}$ is a Lie equivalence. The inclusion $B\subset A_G$ integrates to immersive morphisms $H_{max}\to \tilde G$ and $H_{min}\to G$ fitting in the following commutative diagram:
$$\xymatrix{
H_{max}\ar[r]^{} \ar[d]  & \tilde G \ar[d] \\
H_{min}\ar[r]^{} & G
}$$
\end{proposition}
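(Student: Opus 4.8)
The plan is to run, for the $G$-invariant foliation $F^B$ on $t^{-1}(S)$, exactly the argument used to prove Lie~1 (Proposition~\ref{prop:lie1}) and Proposition~\ref{prop:min-max} for $F^s$ on $G$. First I would record that, since the structure maps $F^B\to t^{-1}(S)\to G$ are $G$-equivariant, the induced right $G$-actions on $\mon(F^B)$ and $\hol(F^B)$ are principal by~\ref{principal}, so Godement criterion~\ref{prop:godement} yields the Lie groupoid quotients $H_{max}=\mon(F^B)/G$ and $H_{min}=\hol(F^B)/G$ over $S=t^{-1}(S)/G$, together with the surjective submersive Lie groupoid projections $\mon(F^B)\to H_{max}$ and $\hol(F^B)\to H_{min}$.

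Next I would check that $H_{max}$ and $H_{min}$ integrate $B$. Each of these projections is a fibration of Lie groupoids along a free principal action, hence induces a fiberwise epimorphism onto its algebroid, and a rank count (the fibers of $F^B$ over $t^{-1}(S)$ and those of $B$ over $S$ having the same dimension) makes this an isomorphism, from which one reads off $A_{H_{max}}\cong F^B/G=A_{H_{min}}$; and $F^B/G=B$ by Proposition~\ref{prop:subalgbd}. I would also note that the source-fibers of $H_{max}$ are those of $\mon(F^B)$, so $H_{max}$ is source-simply connected and $H_{min}$ is source-connected. For the second assertion, the canonical projection $\mon(F^B)\to\hol(F^B)$ is a Lie equivalence by construction of the holonomy groupoid (cf.\ Proposition~\ref{prop:kernel} and the discussion after Proposition~\ref{prop:mon-hol}), and it is $G$-equivariant, so it descends to a morphism $H_{max}\to H_{min}$; on algebroids this descended map is the one induced by $\id_{F^B}$ on the $G$-quotients, i.e.\ $\id_B$, so it is again a Lie equivalence.

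For the last assertion I would use that $F^B\subset F^s$ is an embedded subalgebroid over the embedded submanifold $t^{-1}(S)\subset G_1$: choosing foliated charts of $F^s$ adapted to $t^{-1}(S)$ so that the foliated charts of $F^B$ are their restrictions, the inclusion induces immersive, $G$-equivariant morphisms $\mon(F^B)\to\mon(F^s)$ and $\hol(F^B)\to\hol(F^s)$. Passing to the $G$-quotients then gives immersive morphisms $H_{max}\to\mon(F^s)/G=\tilde G$ and $H_{min}\to\hol(F^s)/G=G$, whose induced maps on algebroids are the inclusion $B\subset A_G$, and the square commutes by functoriality of $\mon$ and $\hol$ with respect to chart inclusions together with naturality of the projection $\mon\to\hol$, all carried through the $G$-quotients.

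The step I expect to be the main obstacle is the non-wide bookkeeping: verifying that restricting to the proper submanifold $t^{-1}(S)$ keeps the $G$-action on $\mon(F^B)$ and $\hol(F^B)$ free and principal (so that~\ref{prop:godement} applies), and that the foliated charts of $F^B$ can consistently be taken as restrictions of foliated charts of $F^s$ on $G$, which is precisely what upgrades the comparison maps from mere morphisms to immersions. The remaining verifications — the algebroid of a principal quotient, the identification $F^B/G=B$, and commutativity of the square — are routine and parallel to \cite{mm-s} and Proposition~\ref{prop:min-max}.
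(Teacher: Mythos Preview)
Your outline follows the paper's strategy closely and is essentially correct, but you have misidentified where the real subtlety lies. The principality of the $G$-actions on $\mon(F^B)$ and $\hol(F^B)$ is already established in the paragraph preceding the proposition, so your ``main obstacle'' is not an obstacle at all. The genuine delicate point, which you skip over, is the \emph{existence} of the map $\hol(F^B)\to\hol(F^s)$: you invoke ``functoriality of $\mon$ and $\hol$ with respect to chart inclusions'', but the holonomy groupoid is \emph{not} functorial for inclusions of subfoliations in general---a loop with trivial $F^B$-holonomy need not have trivial $F^s$-holonomy. The paper singles out exactly this issue and resolves it by observing that $F^s$ has \emph{no} holonomy whatsoever (being the foliation by fibers of the submersion $s$), so that $\hol(F^s)$ is simply the submersion groupoid and the map from $\hol(F^B)$ is automatic. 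Without this remark your argument for the bottom arrow of the square has a gap.

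A smaller difference: for immersiveness of $H_{max}\to\tilde G$ and $H_{min}\to G$ you pass immersions $\mon(F^B)\to\mon(F^s)$, $\hol(F^B)\to\hol(F^s)$ to the $G$-quotients, relying on adapted foliated charts. This works, but you should say why immersiveness survives the quotient (both sides are quotients by principal actions of the \emph{same} groupoid $G$, so the vertical tangent spaces match). The paper instead deduces immersiveness directly from the short exact sequence $0\to A_y\to T_gG\to T_xM\to 0$ applied to the quotient morphisms, which is cleaner and avoids the chart bookkeeping entirely.
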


\begin{proof}
Since $\mon(F^B)$ and $\hol(F^B)$ are integrations of $F^B$, the quotients $H_{max}$ and $H_{min}$ are integrations of $F^B/G=B$, and since $\mon(F^B)\to\hol(F^B)$ is the identity at the infinitesimal level, the same holds for quotient morphism $H_{max}\to H_{min}$.

The inclusion $F^B\subset F^s$ gives rise to a commutative diagram of monodromy and holonomy groupoids. Even though the holonomy groupoid is not functorial, it does yield a map in this particular situation, for the foliation $F^s$ has no holonomy.

$$\xymatrix{
\mon(F^B) \ar[r]^{} \ar[d]  & \mon(F^s) \ar[d] \\
\hol(F^B) \ar[r]^{} & \hol(F^s)
}$$
Modding out by the $G$-action we get the desired commutative square. That $H_{max}\to \tilde G$ and $H_{min}\to G$ are immersive follows from the natural sequence
$0 \to A_y\xto{d R_g}T_gG \xto{ds} T_xM\to 0$.
\end{proof}

The previous proposition shows that a subalgebroid of an integrable algebroid is integrable, for instance, by $H_{min}$. But the canonical map $H_{min}\to G$ induced by the inclusion need not be injective. As an example, consider the pair groupoid $G=P(M)$ and a foliation $B=F\subset TM=A_G$ with non-trivial holonomy.

\begin{corollary}
\label{cor:trivial-holonomy}
$H_{min}\rightarrow G$ is injective if and only if the foliation $F^B$ has trivial holonomy.
\end{corollary}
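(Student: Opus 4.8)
The plan is to pull the statement back to the map of holonomy groupoids used to construct $H_{min}$. Recall from the proof of Proposition~\ref{prop:min-max-2} that the inclusion $F^B\subset F^s$ induces a $G$-equivariant morphism $\phi\colon\hol(F^B)\to\hol(F^s)$, and that $H_{min}\to G$ is obtained from $\phi$ by modding out by the principal $G$-actions: here $\hol(F^B)/G=H_{min}$, and since $F^s$ has no holonomy, $\hol(F^s)$ is the submersion groupoid of $s\colon G\to M$ and $\hol(F^s)/G=G$.

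First I would show that $H_{min}\to G$ is injective if and only if $\phi$ is injective. One implication is immediate. For the converse, suppose $H_{min}\to G$ is injective and $\phi(a)=\phi(b)$ for arrows $a,b$ of $\hol(F^B)$; then $a$ and $b$ have the same class in $H_{min}$, so $a=b\cdot c$ for some $c\in G$ under the $G$-action on $\hol(F^B)$; applying $\phi$ and using equivariance yields $\phi(b)\cdot c=\phi(b)$, and since the $G$-action on the submersion groupoid $\hol(F^s)$ is free, $c$ must be a unit, whence $a=b$. Second, I would read off injectivity of $\phi$ directly: on objects it is the inclusion $t^{-1}(S)\hookrightarrow G$, which is injective, while on arrows it sends the holonomy class of an $F^B$-path from $g$ to $h$ — which lies in a single source-fibre, so $s(g)=s(h)$ — to the unique arrow $g\to h$ of the submersion groupoid. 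Hence $\phi$ identifies two arrows of $\hol(F^B)$ precisely when they share their source $g$ and target $h$ but represent distinct holonomy classes, equivalently when their composite $a^{-1}b$ (an element of the holonomy group $\hol_g(F^B)$) is non-trivial; conversely any non-trivial element of some $\hol_g(F^B)$ and the identity at $g$ are identified by $\phi$. Therefore $\phi$ is injective if and only if every $\hol_g(F^B)$ is trivial, i.e. if and only if $F^B$ has trivial holonomy; combined with the first step, this proves the corollary. (By $G$-invariance of $F^B$ it suffices to test the holonomy groups at the units $u_p$, $p\in S$.)

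The step requiring the most care, and essentially the only one that is not formal, is the equivalence between injectivity of $H_{min}\to G$ and injectivity of $\phi$: it rests on the freeness of the $G$-action on $\hol(F^s)$, the same principality already used in Proposition~\ref{prop:min-max-2} to form these quotients via Godement's criterion. I do not anticipate a genuine obstacle beyond careful bookkeeping with the $G$-action on $\hol(F^B)$ inherited from right multiplication on $t^{-1}(S)$.
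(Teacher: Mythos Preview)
Your proof is correct and follows essentially the same approach as the paper: reduce injectivity of $H_{min}\to G$ to injectivity of $\hol(F^B)\to\hol(F^s)$ via the principal $G$-actions, then use that the latter is injective on objects and that $\hol(F^s)$ has trivial isotropy to conclude. Your argument is more explicit than the paper's, which compresses the first step into the remark that the vertical maps are principal $G$-bundles and the second into the observation that a groupoid map injective on objects is injective iff it is so on isotropies.
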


\begin{proof}
In the diagram below, one of the horizontal morphisms is injective if and only if the other is so, as the vertical maps are principal $G$-bundles. 
$$\xymatrix{
\hol(F^B) \ar[r]^{} \ar[d]  & \hol(F^s) \ar[d] \\
H_{min}\cong \hol(F^B)/G \ar[r]^{} & G\cong \hol(F^s)/G
}$$
And regarding $(\hol(F^B)\then t^{-1}(N))\to(\hol(F^s)\then G)$, since it is injective at the level of objects, it is going to be injective at the level of arrows if and only if it is so in the isotropies. Since $F^s$ has no isotropy, this holds if and only if $F^B$ has no holonomy, and the result follows.
\end{proof}



\section{Hausdorff versions for Lie's second Theorem}


Using the results from the previous section, we present here a simple proof of classic Lie 2. Then we derive a Hausdorff version, which must include a subtle hypothesis, as we show in an example. We work out a second Hausdorff version valid for foliation groupoids.

\medskip



\begin{proposition}[Lie 2]\label{prop:lie2}
Let $G$ and $H$ be Lie groupoids and $\varphi:A_G\rightarrow A_H$ a Lie algebroid morphism. If $G=\tilde G$ then $\varphi$ integrates to a groupoid morphism
$\phi:G\to H$, which is unique.
\end{proposition}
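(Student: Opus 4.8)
The plan is to reduce Lie~2 to the integration of a (non-wide) Lie subalgebroid via the graph construction, using Propositions~\ref{prop:subalgbd} and~\ref{prop:min-max-2}. Denote by $f\colon M\to N$ the base map of $\varphi$ and consider the product groupoid $G\times H\toto M\times N$, with algebroid $A_G\times A_H=A_{G\times H}$. The graph $\Gamma_\varphi=\{(a,\varphi(a)):a\in A_G\}$ is a vector subbundle of $A_G\times A_H$ over $S=\{(x,f(x)):x\in M\}\cong M$, and the hypothesis that $\varphi$ is a Lie algebroid morphism is exactly what turns $\Gamma_\varphi$ into a Lie subalgebroid: its anchor at $(x,f(x))$ is $(\rho_G(a),\rho_H(\varphi(a)))$, tangent to $S$ because $\rho_H\circ\varphi$ is $f$-related to $\rho_G$, and the bracket closes since $[(a,\varphi(a)),(b,\varphi(b))]=([a,b],\varphi([a,b]))$. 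Thus $\Gamma_\varphi\then S$ is a generally non-wide Lie subalgebroid of $A_{G\times H}$ whose first projection $p\colon\Gamma_\varphi\to A_G$ is an isomorphism of Lie algebroids.

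I would then apply Proposition~\ref{prop:min-max-2} to the pair $(G\times H,\Gamma_\varphi)$, obtaining a source-connected Lie groupoid $H_{min}$ integrating $\Gamma_\varphi$ -- its source fibers being those of $\hol(F^{\Gamma_\varphi})$ -- together with an immersive morphism $\iota\colon H_{min}\to G\times H$ integrating the inclusion $\Gamma_\varphi\subset A_{G\times H}$. Write $\mathrm{pr}_G\iota\colon H_{min}\to G$ and $\mathrm{pr}_H\iota\colon H_{min}\to H$ for the composites with the two projections. On algebroids $\mathrm{pr}_G\iota$ induces the isomorphism $p$, so it is a Lie equivalence between source-connected groupoids; and here is the only place the hypothesis $G=\tilde G$ enters. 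Since $G$ is source-simply connected, the remark following Proposition~\ref{prop:lie-equiv} (source-simply connected groupoids are maximal among the source-connected ones) forces the Lie equivalence $\mathrm{pr}_G\iota$ to be injective, and being also \'etale and surjective it is an isomorphism of Lie groupoids. Setting $\phi:=(\mathrm{pr}_H\iota)\circ(\mathrm{pr}_G\iota)^{-1}\colon G\to H$ then gives a morphism with $A_\phi=(\mathrm{pr}_H|_{\Gamma_\varphi})\circ p^{-1}=\varphi$.

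For uniqueness, if $\phi'\colon G\to H$ also integrates $\varphi$ then its graph is a source-connected immersed Lie subgroupoid of $G\times H$ integrating $\Gamma_\varphi$ and isomorphic to $G$ via $\mathrm{pr}_G$; since $G$ is source-simply connected this graph must coincide with $\iota(H_{min})=\mathrm{graph}(\phi)$. Alternatively one invokes the standard fact that a Lie groupoid morphism out of a source-connected groupoid is determined by its differential -- it is determined on a neighbourhood of the units, and source fibers are connected -- as already used implicitly in the proof of Proposition~\ref{prop:lie1}.

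The genuinely hard content of the classical argument is absorbed into Propositions~\ref{prop:subalgbd} and~\ref{prop:min-max-2}, so the delicate points that remain are two: checking that the graph of a non-base-preserving morphism is a subalgebroid over $\mathrm{graph}(f)$ -- which is precisely why the non-wide extension of~\cite{mm-s} is needed here -- and the step turning the Lie equivalence $\mathrm{pr}_G\iota$ into an isomorphism. The latter is where source-simple-connectedness of $G$ is essential: in general $\mathrm{pr}_G\iota$ is only a covering-type map on source fibers, and $\varphi$ may genuinely fail to integrate, as in the classical obstruction -- for instance $\mathrm{id}\colon\mathfrak{t}\to\mathfrak{t}$ does not integrate to a morphism $S^1\to\mathbb{R}$.
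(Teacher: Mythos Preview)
Your proof is correct and follows essentially the same route as the paper: integrate the graph $\Gamma_\varphi\subset A_G\times A_H$ via Proposition~\ref{prop:min-max-2} to get $H_{min}\to G\times H$, observe that the first projection is a Lie equivalence, and invert it using source-simple-connectedness of $G$. The only cosmetic difference is that the paper phrases the inversion step as obtaining a section $\beta$ of $pr_1\alpha$ from the universal property in Lie~1 (Proposition~\ref{prop:lie1}), whereas you invoke the maximality remark after Proposition~\ref{prop:lie-equiv} directly to conclude $\mathrm{pr}_G\iota$ is a bijective \'etale map, hence an isomorphism; these are equivalent, and your version is arguably more transparent.
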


The idea of integrating the graph is already present in the original reference \cite{mx}, see also \cite{mmbook}. Our results about integrating non-wide subalgebroids help us to concretize this idea.

\begin{proof}
Let $(B\then S)\subset (A_G\times A_H\then M\times N)$ be the graph of $\varphi$, seen as a non-wide Lie subalgebroid of the cartesian product. Formally, we can construct $B\then S$ as the fibered product between the identity of $A_G$ and $\varphi$, as in \cite[Appendix]{bcdh}.
Since $G\times H$ integrates $A_G\times A_H$, Proposition \ref{prop:min-max-2}  yields a Lie groupoid $H_{min}$ integrating $B$ and a morphism $\alpha$ as follows: $$\xymatrix{
H_{min} \ar[r]^{\alpha} \ar^{\pi}[d]  & G\times H  \ar[ld]^{pr_1} \ar[dr]^{pr_2} & \\
\ar@/^/@{-->}[u]^{\beta} G  &  & H
}$$
The composition $pr_1\alpha:H_{min}\to G$ is a Lie equivalence, and since $G=\tilde G$, by Lie 1, there must exists a section $\beta$ for it. We can finally integrate $\varphi$ to the composition $\phi=pr_2\alpha\beta$.
 \end{proof}

 
Given $G, H$ Hausdorff Lie groupoids, and $\varphi: A_G\to A_H$ a Lie algebroid map, the integration $\phi:\tilde G\to H$ may not descend to the maximal Hausdorff quotient $\hat G$, as the next example shows.


\begin{example}\label{ex:counter}
Let $M=\R^3\setminus\{(0,0,z):z\geq 0\}$, and let $F$ be the simple foliation given by the third projection.
Let $G=\hol(F)$, which is a submersion groupoid, hence Hausdorff.
The kernel $\tilde K$ of the projection $\mon(F)\to\hol(F)$ is connected, for every loop $\gamma$ in $F$ can be deformed into a trivial one just by decreasing $z$. It follows that $\hat K=\tilde K$ and $\hat G=G$.
Let  $\phi:(\mon(F)\toto M)\to (\R\toto\ast)$ be the morphism given by
$\phi([\gamma])=f(z)\omega(\gamma)$, where 
$f:\R\to\R$ is the smooth map in Example \ref{ex:holonomy} and
$\omega(\gamma)=\frac{1}{2\pi}\int_\gamma \frac{xdy-ydx}{x^2+y^2}$ is the winding number of $\gamma$ at $0$.
$\ker\phi$ consists of the loops at $z=0$, so it does not include $\tilde K$. Thus, even though $\R\toto \ast$ is Hausdorff, $\phi$ does not descend to a morphism $\hat G\to\R$.
\end{example}



Let us present now our first Hausdorff version for Lie 2. Given $G,H$ Lie groupoids and $\varphi:A_G\to A_H$ a Lie algebroid morphism, denote by $B\then S$ the graph of $\varphi$, and by $F^\varphi$ to the $(G\times H)$-invariant foliation on $(t\times t)^{-1}(S)\subset G\times H$, given in Proposition \ref{prop:subalgbd}. Compare the holonomy hypothesis with that of Corollary 2.5 in \cite{mm-s}. 

\begin{theorem}[Hausdorff Lie 2, v1]
\label{thm:HLie2v1}
Let $G$ and $H$ be Hausdorff Lie groupoids and $\varphi:A_G\to A_H$ a Lie algebroid morphism. If $G=\hat G$ and the foliation $F^\varphi$ has trivial holonomy, then $\varphi$ integrates to a groupoid morphism $\phi:G\to H$, which is unique. 
\end{theorem}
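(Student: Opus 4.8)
The plan is to mimic the proof of classic Lie 2 (Proposition \ref{prop:lie2}), but replace the use of Lie 1 with its Hausdorff version, Theorem \ref{thm:HLie1}, checking carefully that the auxiliary groupoid we construct is Hausdorff. First I would form the graph $B\then S$ of $\varphi$ as a non-wide Lie subalgebroid of $A_G\times A_H=A_{G\times H}$, exactly as in Proposition \ref{prop:lie2}, and consider the associated $(G\times H)$-invariant foliation $F^\varphi$ on $(t\times t)^{-1}(S)\subset G\times H$, together with the Lie groupoid $H_{min}=\hol(F^\varphi)/(G\times H)$ from Proposition \ref{prop:min-max-2}, which integrates $B$ and comes with immersive morphisms $\alpha_1=pr_1\alpha:H_{min}\to G$ and $\alpha_2=pr_2\alpha:H_{min}\to H$, where $\alpha_1$ is a Lie equivalence.

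The key new point is that $H_{min}$ is Hausdorff. I would argue this in two steps. First, since $F^\varphi$ has trivial holonomy by hypothesis, Corollary \ref{cor:trivial-holonomy} shows that $\alpha:H_{min}\to G\times H$ is injective; being also immersive, and a groupoid morphism over the embedded submanifold $S\subset M\times N$, it realizes $H_{min}$ as an (immersed) subgroupoid of the Hausdorff groupoid $G\times H$. Second, I would upgrade this to an actual topological embedding, or at least argue directly that the arrow manifold of $H_{min}$ is Hausdorff: a space that injects continuously into a Hausdorff space is Hausdorff, so once $\alpha_1$ (equivalently $\alpha$) is known to be injective and continuous, $H_{min}$ is Hausdorff automatically — no embedding statement is even needed. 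This is the conceptual heart, and I expect verifying injectivity of $\alpha$ (i.e.\ correctly invoking Corollary \ref{cor:trivial-holonomy} for the graph foliation $F^\varphi$) to be the main obstacle, since one must make sure the holonomy of $F^\varphi$ as a foliation of $(t\times t)^{-1}(S)$ matches the hypothesis as stated.

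With $H_{min}$ Hausdorff, I would then apply the universal property of $\hat G$: the Lie equivalence $\alpha_1:H_{min}\to G$ has Hausdorff source, and since $G=\hat G$ is the universal Hausdorff integration, Theorem \ref{thm:HLie1} provides a (unique) section $\beta:G\to H_{min}$ with $\alpha_1\beta=\id_G$ — here I use that a section of a Lie equivalence is the same as a factorization of $\id_G$ through $\alpha_1$, exactly as the universal property of Lie 1 was used in the proof of Proposition \ref{prop:lie2}. Then $\phi=\alpha_2\beta=pr_2\alpha\beta:G\to H$ is a Lie groupoid morphism, and at the infinitesimal level $A_\phi=pr_2\circ A_\alpha\circ A_\beta$ equals $\varphi$ because $A_\beta=(A_{\alpha_1})^{-1}$ and $A_\alpha$ identifies $B$ with the graph of $\varphi$; so $\phi$ integrates $\varphi$.

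Finally, for uniqueness, suppose $\phi,\phi':G\to H$ both integrate $\varphi$. Since $G=\hat G$ is in particular source-connected, I would note that $A_{\phi}=A_{\phi'}=\varphi$ forces $\phi=\phi'$: two Lie groupoid morphisms out of a source-connected groupoid that agree on objects and induce the same algebroid map must coincide, because they agree on a neighborhood of the units in each source-fiber and source-fibers are connected (this is the standard uniqueness part of Lie 2, already implicit in Proposition \ref{prop:lie2}). Alternatively, uniqueness follows by chasing through the uniqueness of $\beta$ in Theorem \ref{thm:HLie1} after observing that any integration $\phi$ of $\varphi$ yields, via its graph, a section of $\alpha_1$. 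I would present the direct source-connected argument as it is the cleanest.
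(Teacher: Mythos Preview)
Your proposal is correct and follows essentially the same route as the paper's proof: form the graph subalgebroid, use Corollary \ref{cor:trivial-holonomy} and the trivial-holonomy hypothesis to see that $\alpha:H_{min}\to G\times H$ is injective, conclude $H_{min}$ is Hausdorff since it injects continuously into the Hausdorff space $G\times H$, then invoke Theorem \ref{thm:HLie1} to obtain the section $\beta$ and set $\phi=pr_2\alpha\beta$. Your treatment of uniqueness is a bit more explicit than the paper's, but the argument is the same.
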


\begin{proof}
As in Proposition \ref{prop:lie2}, the inclusion of the graph $S\subset A_G\times A_H$ of $\varphi$ integrates to a Lie groupoid immersion $\alpha:H_{min}\to G\times H$. Since $F^\varphi$ has no holonomy, the morphism $\alpha$ is injective, as seen in Corollary \ref{cor:trivial-holonomy}, and since $G\times H$ is Hausdorff we conclude that $H_{min}$ is Hausdorff as well.
The composition $pr_1\alpha:H_{min}\to G$ is a Lie equivalence, and since $G=\hat G$, by Theorem \ref{thm:HLie1}, there must exist $\beta:G\to H_{min}$ a section for $pr_1\alpha$. The composition $\phi=pr_2\alpha\beta$ is the desired integration of $\varphi$.
\end{proof}

Let us now focus on the case of {\bf foliation groupoids}, namely those on which the isotropy groups are discrete \cite{cm}. Foliation groupoids can be characterized as those whose algebroid has an injective anchor map, or equivalently, those integrating a foliation. They can also be characterized as those groupoids which are Morita equivalent to an \'etale one.

\begin{lemma}\label{lemma:foliation-units}
If $H\toto N$ is a foliation groupoid, then the units $N$ form an open set within the isotropy $I(H)=\bigcup_{x\in N}H_x$ of $H$.
\end{lemma}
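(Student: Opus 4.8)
The plan is to use the characterization of foliation groupoids recalled just above the statement — that $H\toto N$ is a foliation groupoid precisely when the anchor $\rho\colon A_H\to TN$ of its algebroid is injective — to prove that the map $\Psi=(s,t)\colon H\to N\times N$ is an \emph{immersion at every unit} $u(x)$. Once this is known, $\Psi$ is injective on some neighbourhood $W$ of $u(x)$; and since $I(H)=\Psi^{-1}(\Delta_N)$ for the diagonal $\Delta_N\subset N\times N$, while $u(N)$ already maps \emph{onto} $\Delta_N$ (as $\Psi(u(y))=(y,y)$), no arrow of $I(H)$ other than a unit can survive near $u(x)$.

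For the immersion claim I would fix $x\in N$ and use the splitting $T_{u(x)}H=du_x(T_xN)\oplus A_x$ coming from the convention $A_H=\ker ds\bar_N$ and the fact that $u$ is a section of $s$. Under this splitting one has $ds_{u(x)}(du_x(v)+a)=v$ and $dt_{u(x)}(du_x(v)+a)=v+\rho_x(a)$, so
$$d\Psi_{u(x)}(du_x(v)+a)=(v,\ v+\rho_x(a)),$$
which is injective exactly because $\rho_x$ is injective, i.e.\ because $H$ is a foliation groupoid. Thus $\Psi$ is an immersion at $u(x)$, and by the local normal form for immersions it restricts to an injective map on some open $W\ni u(x)$ in $H$; as this is a purely local statement, the possible non‑Hausdorffness of $H$ plays no role.

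Finally I would carry out a small shrinking argument: choose an open $V\ni x$ in $N$ with $u(V)\subset W$ (continuity of $u$) and set $W'=W\cap s^{-1}(V)$, still an open neighbourhood of $u(x)$. If $g\in I(H)\cap W'$ and $x'=s(g)=t(g)\in V$, then $\Psi(g)=(x',x')=\Psi(u(x'))$ with both $g$ and $u(x')\in u(V)\subset W$, so injectivity of $\Psi\bar_W$ forces $g=u(x')\in u(N)$. Hence $I(H)\cap W'=u(N)\cap W'$, so $u(x)$ is interior to $u(N)$ in $I(H)$; since $x$ is arbitrary, $N$ is open in $I(H)$.

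I expect the only genuinely delicate point to be the immersion statement: that $(s,t)$ is an immersion along the units is \emph{false} for general Lie groupoids — it already fails for nontrivial Lie groups, where $(s,t)$ is constant — and it is precisely the injectivity of the anchor, equivalently the discreteness of the isotropy groups, that rescues it. Everything after that (local injectivity of an immersion, and the diagonal‑chasing with the shrunken neighbourhood) is routine.
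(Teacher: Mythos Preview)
Your proof is correct and takes a genuinely different route from the paper's. The paper works with an explicit local model: given a foliated chart $(U,\phi)$ for the characteristic foliation, it invokes the identification $H^\circ_U\cong\mon(F\bar_U)\cong P(V)\times U(W)$ established in the proof of Proposition~\ref{prop:min-max}, and then reads off $I(H)\cap H^\circ_U=U$ directly from that product description. Your argument instead stays at the infinitesimal level: the injectivity of the anchor makes $(s,t)$ an immersion along the units, and the rest is local injectivity plus the diagonal trick. Your approach is more self-contained --- it does not depend on the monodromy machinery of Section~\ref{sec:holonomy} --- and it isolates cleanly the exact differential-geometric fact (immersivity of $(s,t)$ at units) that encodes ``foliation groupoid''. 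The paper's approach, on the other hand, produces an explicit open neighbourhood and reinforces the coherence of its foliated-chart framework, which it is about to use again in Theorem~\ref{thm:HLie2v2}.
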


\begin{proof}
The characteristic foliation $F$ of $H$ is a regular foliation on $N$. Let $x\in N$ and let $(U,\phi)$ be a foliated chart around it. Then $U=V\times W$ with $V\cong\R^p$, $W\cong\R^q$ and $\dim(M)=p+q$. We have seen in the proof of Proposition \ref{prop:min-max} that $H^\circ_U$ is the local monodromy groupoid $\mon(F\bar_U)$, and this is isomorphic to the product $P(V)\times U(W)$ of the pair groupoid of $V$ and the unit groupoid of $W$.
Since $H^\circ_U\subset H_U\subset H$ are open inclusions, we conclude that $I(H)\cap H^\circ_U=I(H^\circ_U)=U\subset N$ is an open neighborhood of $x$ within $I(H)$.
\end{proof}

Our second Hausdorff version for Lie 2 disregards the holonomy hypothesis when assuming that $H$ is a foliation groupoid.

\begin{theorem}[Hausdorff Lie 2, v2]
\label{thm:HLie2v2}
Let $G$ and $H$ be Hausdorff Lie groupoids and $\varphi:A_G\to A_H$ a Lie algebroid morphism. If $G=\hat G$ and $H$ is a foliation groupoid then $\varphi$ integrates to a groupoid morphism $\phi:G\to H$, which is unique. 
\end{theorem}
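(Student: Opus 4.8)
The plan is to mimic the proof of Theorem \ref{thm:HLie2v1}, replacing the trivial–holonomy hypothesis by the foliation–groupoid hypothesis at a single point. As in Proposition \ref{prop:lie2}, I would let $(B\then S)\subset(A_G\times A_H\then M\times N)$ be the graph of $\varphi$, so that $B\cong A_G$ via the first projection, let $F^\varphi$ be the $(G\times H)$-invariant foliation on $(t\times t)^{-1}(S)\subset G\times H$ given by Proposition \ref{prop:subalgbd}, and form $H_{min}=\hol(F^\varphi)/(G\times H)$ as in Proposition \ref{prop:min-max-2}. This produces an immersion $\alpha:H_{min}\to G\times H$ together with the Lie equivalence $pr_1\alpha:H_{min}\to G$. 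The difference with v1 is that here $\alpha$ need not be injective and $H_{min}$ need not be Hausdorff, so one cannot conclude directly; the new idea is to pass to the quotient of $H_{min}$ by $\ker\alpha$, whose Hausdorffness is precisely what the foliation–groupoid hypothesis buys.

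The heart of the argument is to show that $\ker\alpha\subset H_{min}$ is a \emph{closed swind} subgroupoid. It is wide and normal as the kernel of a morphism, it is intransitive because the object map of $\alpha$ is the embedding $S\hookrightarrow M\times N$, and it has discrete isotropy because its algebroid is $\ker(B\hookrightarrow A_G\times A_H)=0$. For smoothness and closedness I would work inside $K_1:=\ker(pr_1\alpha)$, which is a swind subgroupoid by Proposition \ref{prop:kernel} (since $pr_1\alpha$ is a Lie equivalence) and is closed in $H_{min}$ because $M$ is closed in the Hausdorff groupoid $G$. Every arrow of $K_1$ is an isotropy arrow of $H_{min}$, so $pr_2\alpha$ restricts to a continuous map $K_1\to I(H)$ and $\ker\alpha=(pr_2\alpha|_{K_1})^{-1}(N)$. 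Now Lemma \ref{lemma:foliation-units} says that $N$ is open in $I(H)$, while $N$ is also closed in $I(H)$ since it is closed in the Hausdorff groupoid $H$; hence $\ker\alpha$ is both open and closed in $K_1$, in particular a smoothly embedded submanifold of $H_{min}$ and closed in it. This is the step I expect to be the main obstacle to set up cleanly: one has to notice that the right ambient manifold is $K_1$ rather than $H_{min}$, and that Lemma \ref{lemma:foliation-units} is exactly the input that promotes the closed subset $\ker\alpha$ to an open-and-closed one, so that it becomes a manifold.

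Once $\ker\alpha$ is a closed swind subgroupoid, Proposition \ref{prop:kernel} produces a Hausdorff Lie groupoid $H':=H_{min}/\ker\alpha$ with a Lie equivalence $H_{min}\to H'$; the immersion $\alpha$ descends to an injective immersion $\alpha':H'\to G\times H$, and $pr_1\alpha':H'\to G$ is again a Lie equivalence. Since $G=\hat G$ and $H'$ is Hausdorff, Theorem \ref{thm:HLie1} yields a section $\beta:G\to H'$ of $pr_1\alpha'$, and then $\phi:=pr_2\alpha'\circ\beta:G\to H$ is a groupoid morphism with $A_\phi=\varphi$, exactly as in Proposition \ref{prop:lie2}. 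For uniqueness, any two integrations of $\varphi$ agree after precomposition with the surjective Lie equivalence $\tilde G\to G$, because the integration on the source-simply-connected $\tilde G$ is unique by classic Lie 2.
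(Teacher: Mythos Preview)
Your proof is correct, and the key mechanism is the same as the paper's: both arguments hinge on the fact that for a Hausdorff foliation groupoid $H$ the units $N$ are simultaneously open in $I(H)$ (Lemma \ref{lemma:foliation-units}) and closed in $H$ (Lemma \ref{lemma:units-are-closed}), so that the preimage of $N$ under the relevant map into $I(H)$ is clopen inside a swind kernel.

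The route, however, is different. The paper works directly with $\tilde G$: it integrates $\varphi$ to $\tilde\phi:\tilde G\to H$ via classical Lie~2, observes that $\tilde\phi(\tilde K)\subset I(H)$, and uses the clopen fact to conclude that $\ker\tilde\phi\cap\tilde K$ is a closed swind subgroupoid of $\tilde K$; minimality of $\hat K$ then forces $\hat K\subset\ker\tilde\phi$, so $\tilde\phi$ descends to $\hat G=G$. Your argument instead mirrors the proof of Theorem \ref{thm:HLie2v1}: you build $H_{min}$ and its immersion $\alpha$ into $G\times H$, use the clopen fact inside $K_1=\ker(pr_1\alpha)$ to show $\ker\alpha$ is closed swind, pass to the Hausdorff quotient $H'=H_{min}/\ker\alpha$, and then invoke the universal property of $\hat G$ to produce a section. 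The paper's version is shorter and avoids the $H_{min}$ machinery; your version has the virtue of making the parallel with v1 transparent, exhibiting the foliation-groupoid hypothesis as an explicit substitute for the trivial-holonomy condition at the single step where Hausdorffness of the auxiliary groupoid is needed. (In fact, had you used $H_{max}\cong\tilde G$ in place of $H_{min}$, your $\ker\alpha$ would be exactly the paper's $\tilde K\cap\tilde\phi^{-1}(N)$, so the two arguments are closer than they first appear.)
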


\begin{proof}
We know that $\varphi$ can be integrated to a morphism $\tilde{\phi}:\tilde G\to H$ by Lie 1. This descends to a morphism $\hat G\to H$ if and only if the kernel $\hat K$ of the projection $\tilde G\to \hat G $ is included in $\ker\tilde\phi$.

We know that $N$ is open in $I(H)$ by Lemma \ref{lemma:foliation-units}, and it is also closed by Lemma \ref{lemma:units-are-closed}. 
It follows that 
$\tilde\phi^{-1}(N)\cap\tilde K$ is a closed swind subgroupoid, and by the proof of Theorem \ref{thm:HLie1}, it contains $\hat K$, or in other words, $\hat K$ is included in the kernel of $\tilde\phi$, which completes the proof.
\end{proof}


Given a Lie algebroid $A$ and a Hausdorff integration $G$, unlike the case of $\tilde G$, the maximal Hausdorff integration $\hat G$ from Theorem \ref{thm:HLie1} strongly depends on $G$. More precisely, if $H$ is another Hausdorff integration of $A$, then $\hat H$ and $\hat G$ may a priori be different. In light of Proposition \ref{prop:kernel}, this is because the intersection of the two closed swind subgroupoids $\hat K_G,\hat K_H\subset \tilde G$ may fail to be smooth. Nevertheless, when $A$ is a foliation, the situation is simpler, as described below.

\begin{corollary}\label{cor:mhfc}
If $F$ is a foliation over $M$ admitting a Hausdorff integration, then there is a maximal Hausdorff integration $\hat G_F$ that covers any other Hausdorff integration of $F$.
\end{corollary}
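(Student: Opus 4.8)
The plan is to show that the maximal Hausdorff integration $\hat G_0$ attached by Theorem~\ref{thm:HLie1} to \emph{any} Hausdorff integration $G_0$ of $F$ already dominates every other Hausdorff integration, the key being the foliation-groupoid version of Hausdorff Lie~2. First I would fix a Hausdorff integration $G_0$ of $F$ and, after replacing it by its source-connected component (an open subgroupoid, hence still Hausdorff, with the same algebroid $F$), assume $G_0$ source-connected. Set $\hat G_F:=\hat G_0$. Since $\hat G_0\to G_0$ is a Lie equivalence, $\hat G_F$ has algebroid $F$, so it is itself a Hausdorff integration of $F$.

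The one point that requires care is that $\hat G_F$ satisfies the hypothesis $G=\hat G$ of Theorem~\ref{thm:HLie2v2}, that is, $\hat G_F=\widehat{\hat G_F}$. For this I would invoke the explicit description of the kernel in the proof of Theorem~\ref{thm:HLie1}: with $\tilde G$ the source-simply connected integration (common to $G_0$ and $\hat G_F$, as it depends only on $F$), $\tilde K_0\subset\tilde G$ the kernel of $\tilde G\to G_0$, and $\hat K_0\subset\tilde K_0$ the smallest closed swind subgroupoid contained in $\tilde K_0$, one has $\hat G_F=\tilde G/\hat K_0$ and the kernel of $\tilde G\to\hat G_F$ is precisely $\hat K_0$. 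Now any closed swind subgroupoid $K\subseteq\hat K_0$ is also a closed swind subgroupoid of $\tilde K_0$, hence $K\supseteq\hat K_0$ by minimality; thus $\hat K_0$ is the smallest closed swind subgroupoid inside itself, which, again by the construction in the proof of Theorem~\ref{thm:HLie1}, is exactly the assertion $\widehat{\hat G_F}=\hat G_F$.

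Finally, let $H$ be an arbitrary Hausdorff integration of $F$. Since $H$ integrates the foliation $F$, it is a foliation groupoid. Applying Theorem~\ref{thm:HLie2v2} to the identity morphism $\id_F\colon A_{\hat G_F}\to A_H$ yields a unique groupoid morphism $\phi\colon\hat G_F\to H$ integrating $\id_F$; being an isomorphism at the infinitesimal level, $\phi$ is a Lie equivalence, and by Proposition~\ref{prop:lie-equiv} it restricts to covering maps on source-fibers, hence is surjective onto the source-connected component $H^0$ (onto $H$ when $H$ is source-connected). This shows $\hat G_F$ covers every Hausdorff integration of $F$; running the argument with two such integrations in both roles also shows that $\hat G_F$ is, up to canonical isomorphism, independent of the chosen $G_0$. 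I expect the only genuine subtlety to be the idempotency $\hat G_F=\widehat{\hat G_F}$, which is exactly what licenses the appeal to Theorem~\ref{thm:HLie2v2}; the remainder is a direct application of that theorem together with the observation that every integration of a foliation is a foliation groupoid.
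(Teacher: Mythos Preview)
Your proposal is correct and follows essentially the same route as the paper: both use Theorem~\ref{thm:HLie2v2} (applied to the identity of $F$, with the target a foliation groupoid) to produce Lie equivalences between the $\hat G$'s attached to different Hausdorff integrations, and then uniqueness to conclude. The one point you make explicit that the paper leaves implicit is the idempotency $\widehat{\hat G_F}=\hat G_F$ needed to invoke Theorem~\ref{thm:HLie2v2}; your kernel argument for this is correct and fills a small gap in the paper's exposition.
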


\begin{proof}
Let $G_1$ and $G_2$ be Hausdorff Lie groupoids integrating $F$. Consider $\hat{G_1}$ and $\hat{G_2}$ the Hausdorff covering groupoids associated to $G_1$ and $G_2$, as constructed in Theorem \ref{thm:HLie1}. Since $G_1$ and $G_2$ are foliation groupoids, we can apply Theorem \ref{thm:HLie2v2} to integrate the identity of $F$ to Lie groupoid morphisms $\hat{G_1}\to\hat{G_2}$ and $\hat{G_2}\to\hat{G_1}$, 
which should be mutually inverse by the uniqueness of the integration.
It follows that $\hat{G_1}$ and $\hat{G_2}$ are isomorphic, hence defining $\hat G_F$.
\end{proof}


\section{Application to symplectic geometry}


We now apply our previous results to the integration of Poisson manifolds.
We show that if the Lie algebroid induced by a Poisson manifold has a Hausdorff integration, it also has a Hausdorff symplectic one. To do that, we follow the approach of \cite{bco}, where the canonical symplectic form on the cotangent of a Poisson manifold is regarded as a Lie algebroid morphism. We assume some familiarity with VB-groupoids and VB-algebroids, introduced and studied in \cite{pradines,dla1}. We follow the conventions and notations adopted in \cite{bcdh}.

\medskip


A Poisson manifold $(M, \pi)$ gives rise to an {\bf induced Lie algebroid}
$A\then M$ with $A=T^*M$ and bracket and anchor are given by $[df,dg]=d\{f,g\}$ and $\rho(df)=X_f$. It follows from the main theorem in \cite{bco} that the canonical symplectic form $\omega_{can}$ is compatible with the algebroid structure in $A=T^*M$, in the sense that the induced map $\omega_{can}^b:TA\to T^*A$ is VB-algebroid isomorphism between the tangent and cotangent structures $TA\then TM$ and $T^*A\then A^*$. 
It turns out that any Lie algebroid $A$ with a compatible symplectic structure $\omega\in\Omega^2(A)$ turns out to be a Poisson manifold, for the isomorphism $\omega^b$
$$\xymatrix{
TA \ar[r]^{\omega^b} \ar@{=>}[d]  & T^*A \ar@{=>}[d] \\
TM \ar[r] & A^*
}$$
allow us to identify $A\cong T^*M$, $\omega$ with $\omega_{can}$, and the core-anchor map of $TA\then TM$ with a Poisson bivector $\pi^\#:T^*M\to   TM$. Further details can be found in \cite{bco}.

Symplectic groupoids arise as the global counterpart of Poisson manifolds. A {\bf symplectic groupoid} $(G\toto M,\omega)$ is a Lie groupoid with a compatible symplectic form $\omega$ on $G$. 
The compatibility can be described in the following equivalent ways: (i) $\omega^b:(TG\toto M)\to (T^*G\toto A^*)$ is a VB-groupoid morphism, (ii) $\omega$ is multiplicative, or (iii) the graph of the groupoid multiplication is Lagrangian (see \cite{cdw}, \cite{mkbook}). A symplectic groupoid $G$ induces a Poisson structure on the units $M$ such that the source map $s: G \to M$ is Poisson, hence a complete symplectic realization. \cite{cf2} proves that a Poisson manifold admits a complete symplectic realization if and only if the associated algebroid is integrable.



Given $(M,\pi)$ a Poisson manifold, $A\then M$ the induced Lie algebroid, and $G\toto M$ an integration of it, the canonical form $\omega_{can}$ on $A$ may not be integrable to a symplectic form $\omega$ on $G$, so $G\toto M$ may not be a symplectic groupoid. We illustrate it with a simple example.

\begin{example}
Let $M=S^3$ and $\pi=0$. The induced Lie algebroid is $T^*M\then M$, with zero bracket and anchor map. An integration of $A$ is the cotangent bundle $G=T^*M\toto M$, with fiberwise addition as multiplication. Any other integration is obtained from $G$ by modding out by a wide discrete group bundle $K$ that is Lagrangian. When $K$ has rank 3, the quotient $G/K\cong S^3\times T^3$ is compact, $\alpha^2=0$ for every $\alpha\in H^2(G/K)$, and $G/K$ is not symplectic.
\end{example}


The first proof of Lie 2 for groupoids and algebroids appeared in the appendix of \cite{mx}, as a tool to integrate the compatible Poisson bivector on a Lie bialgebroid, viewed as a Lie algebroid map. This idea was carried over in the context of symplectic groupoids by \cite{bco}, yielding a conceptual proof for the following well-known result, that we recall here before developing a Hausdorff version. 

\begin{proposition}\label{prop:application1}
Given $(M, \pi)$ a Poisson manifold, if the induced Lie algebroid $A\then M$ is integrable by a Lie groupoid $G\toto M$, then the source-simply connected integration $\tilde G\toto M$ inherits the structure of a (possible non-Hausdorff) symplectic groupoid.
\end{proposition}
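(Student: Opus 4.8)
The plan is to follow the graph-integration strategy already used for classic Lie 2 (Proposition \ref{prop:lie2}), but now applied to the symplectic datum viewed as a VB-algebroid morphism. By the main theorem of \cite{bco}, the canonical symplectic form $\omega_{can}$ on $A=T^*M$ is precisely the assertion that $\omega_{can}^b:TA\to T^*A$ is an isomorphism of VB-algebroids over $TM\to A^*$. So the goal is to integrate this VB-algebroid morphism to a VB-groupoid morphism over the source-simply connected integration $\tilde G$, and then check that the resulting $2$-form on $\tilde G$ is symplectic.

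First I would set up the relevant groupoids: since $\tilde G$ integrates $A$, the tangent prolongation $T\tilde G\toto TM$ is a VB-groupoid integrating $TA\then TM$, and the cotangent groupoid $T^*\tilde G\toto A_{\tilde G}^*=A^*$ is a VB-groupoid integrating the cotangent VB-algebroid $T^*A\then A^*$ (these are standard, cf. \cite{mkbook,bcdh}). Then $\omega_{can}^b$, being a morphism of Lie algebroids $TA\to T^*A$ (compatibly with the linear structures), has a graph which is a Lie subalgebroid of $TA\times T^*A$ over the graph $S\subset TM\times A^*$. Applying Lie 2 — or more precisely the non-wide subalgebroid machinery of Proposition \ref{prop:min-max-2} together with the fact that $\tilde G$ and hence $T\tilde G$ is source-simply connected — integrates this graph to a subgroupoid of $T\tilde G\times T^*\tilde G$ which is the graph of a VB-groupoid morphism $\omega^b:T\tilde G\to T^*\tilde G$ over the integrating groupoid. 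Equivalently, one invokes Proposition \ref{prop:lie2} directly with $G=\tilde G$ (noting $T\tilde G$ source-simply connected) to integrate $\omega_{can}^b$ to $\omega^b$. This $\omega^b$ is a bundle map $T\tilde G\to T^*\tilde G$, hence corresponds to a $2$-form $\omega\in\Omega^2(\tilde G)$, and multiplicativity of $\omega$ is exactly the statement that $\omega^b$ is a VB-groupoid morphism, i.e. characterization (i) of a symplectic groupoid.

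It then remains to verify that $\omega$ is symplectic, namely closed and nondegenerate. Nondegeneracy is local and follows because $\omega^b$ integrates the isomorphism $\omega_{can}^b$, so $\omega^b$ is an isomorphism in a neighborhood of the units $M\subset\tilde G$; since $\omega$ is multiplicative, left translation by a group element carries $\omega$ at a unit to $\omega$ at any arrow (up to the standard identifications), so nondegeneracy propagates from the units to all of $\tilde G$. For closedness, $d\omega$ is a multiplicative $3$-form vanishing along $M$ (because $\omega_{can}$ is closed, so $\omega$ is closed near the units), and a multiplicative closed-at-the-units form on a source-connected groupoid vanishes identically — again by translating along source-fibers, or by noting that $d\omega$ corresponds to an IM-$3$-form which is zero since its infinitesimal counterpart is $d\omega_{can}=0$.

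The main obstacle I expect is the bookkeeping in the middle step: ensuring that the Lie groupoid morphism one gets by integrating the graph of $\omega_{can}^b$ is genuinely \emph{linear} — i.e. a VB-groupoid morphism and not merely a groupoid morphism $T\tilde G\to T^*\tilde G$ — so that it really comes from a $2$-form, and that the source-simply-connectedness hypothesis of Lie 2 is legitimately available (one must check $T\tilde G$ is source-simply connected, which holds because its source fibers are tangent bundles of the source fibers of $\tilde G$). Once linearity and multiplicativity are in hand, closedness and nondegeneracy are soft propagation-from-the-units arguments, and the non-Hausdorffness of $\tilde G$ plays no role beyond being allowed in the statement.
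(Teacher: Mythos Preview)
Your proposal is correct and follows essentially the same route as the paper's sketch (attributed to \cite{bco}): view $\omega_{can}^b:TA\to T^*A$ as a Lie algebroid morphism, note that $T\tilde G$ is source-simply connected because its source fibers are affine bundles over the (simply connected) source fibers of $\tilde G$, and apply Lie~2 to integrate it to a VB-groupoid morphism $T\tilde G\to T^*\tilde G$ encoding a multiplicative symplectic form. Your additional justification of linearity, nondegeneracy and closedness fills in what the paper leaves as ``this turns out to be,'' and the only minor slip is that the source fibers of $T\tilde G$ are affine (not tangent) bundles over those of $\tilde G$ in general---but since contractibility is all that is needed, your conclusion stands.
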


\begin{proof}[Sketch of proof (following \cite{bco})]
The Lie equivalence $\tilde G\to G$ yields another $T\tilde G\to TG$. Since the source-fibers of $T\tilde G$ are affine bundles over the source-fibers of $\tilde G$, $T\tilde G$ is the source-simply connected integration of $TA$. Then the canonical symplectic structure on $A=T^*M$, regarded as an algebroid morphism $\omega^b_{can}: TA \to T^*A$, integrates to a groupoid morphism $T\tilde G\to T^*\tilde G$ (Proposition \ref{prop:lie2}). This turns out to be the multiplicative symplectic structure on $\tilde G$.
\end{proof}

%




To establish the Hausdorff version of the previous result, our first step is to show that the tangent of the maximal Hausdorff integration is the maximal Hausdorff integration of the tangent.

\begin{lemma}
\label{lemma:hatTG}
Given $G$ a Hausdorff groupoid with algebroid $A$, the Lie groupoid $T\hat G$ is the maximal Hausdorff integration of $TA$ over $TG$.
\end{lemma}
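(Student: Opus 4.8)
The plan is to identify $T\hat G$ with the maximal Hausdorff integration $\widehat{TG}$ produced by Theorem~\ref{thm:HLie1} applied to the tangent groupoid $TG\toto TM$, whose Lie algebroid is $A_{TG}=TA$. We may assume $G$ source-connected, so that $TG$ is source-connected as well (each of its source-fibers is an affine bundle over a source-fiber of $G$). As in the sketch of Proposition~\ref{prop:application1}, the source-fibers of $T\tilde G$ are affine bundles over the simply connected source-fibers of $\tilde G$, hence simply connected, so $T\tilde G=\widetilde{TG}$ is the source-simply connected integration of $TA$. Applying the tangent functor to the universal Lie equivalence $\tilde\phi\colon\tilde G\to G$ yields $T\tilde\phi\colon T\tilde G\to TG$, still a Lie equivalence (the tangent functor preserves algebroid isomorphisms) and étale since $\tilde\phi$ is (Proposition~\ref{prop:lie-equiv}); because $\tilde\phi$ is étale, $\ker(T\tilde\phi)=(T\tilde\phi)^{-1}(TM)=T(\tilde\phi^{-1}(M))=T\tilde K$. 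So the kernel of $\widetilde{TG}\to TG$ is $T\tilde K$.

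By the construction in Theorem~\ref{thm:HLie1}, $\widehat{TG}$ is the quotient of $\widetilde{TG}=T\tilde G$ by the smallest closed swind subgroupoid of $T\tilde K$. On the other hand, $\tilde G\to\hat G$ is an étale Lie equivalence with kernel $\hat K$, so applying the tangent functor together with Proposition~\ref{prop:kernel} gives $T\hat G=T\tilde G/T\hat K$ with $T\hat K=\ker(T\tilde G\to T\hat G)$ a closed swind subgroupoid of $T\tilde K$. Hence it suffices to prove that $T\hat K$ is the smallest closed swind subgroupoid of $T\tilde K$; this is the heart of the matter.

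That $T\hat K$ is closed swind in $T\tilde K$ is routine: it is wide (it contains $TM$), intransitive ($Ts=Tt$ on it, since $s=t$ on $\hat K$), normal (conjugation in $T\tilde G$ is the tangent of conjugation in $\tilde G$), and of dimension $\dim TM$ (so smoothly embedded with discrete isotropy), and it is closed in $T\tilde K$ because $\hat K$ is open and closed in $\tilde K$ of full dimension, whence $T\hat K=p^{-1}(\hat K)$ for the bundle projection $p\colon T\tilde K\to\tilde K$. For minimality, let $K'\subset T\tilde K$ be any closed swind subgroupoid. Since $\dim K'=\dim TM=\dim T\tilde K$ and $K'$ is closed in $T\tilde G$, the set $K'$ is open and closed in $T\tilde K$, i.e.\ a union of connected components. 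Now $p$ is a surjective submersive groupoid morphism with connected (vector-space) fibers, so it induces a bijection on connected components and has the zero section $z$ as a section; therefore $K'=p^{-1}(p(K'))$, $z(p(K'))\subset K'$, and $p(K')$ is a union of connected components of $\tilde K$. It follows that $p(K')$ is a closed swind subgroupoid of $\tilde K$: a subgroupoid as the image of one, wide since $M=p(TM)\subset p(K')$, open and closed (hence smoothly embedded with discrete isotropy), and normal because for $g\in\tilde K$ and an isotropy element $m\in p(K')$ one has $z(g^{-1}mg)=z(g)^{-1}z(m)z(g)\in K'$ by normality of $K'$, so $g^{-1}mg\in p(K')$. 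By minimality of $\hat K$, $\hat K\subset p(K')$, and hence $T\hat K=p^{-1}(\hat K)\subset p^{-1}(p(K'))=K'$, as required.

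Assembling everything, $T\hat G=T\tilde G/T\hat K=\widehat{TG}$: it is Hausdorff (the tangent bundle of a Hausdorff manifold), source-connected, a Lie equivalence over $TG$, and it enjoys the universal property of Theorem~\ref{thm:HLie1} for $TG$, uniqueness of factorizations being inherited from $\widetilde{TG}=T\tilde G$ exactly as in Lie~1. The step I expect to be the main obstacle is the minimality argument above, in particular the verification that the image $p(K')$ of a closed swind subgroupoid is again closed swind; the other ingredients are formal consequences of the behaviour of the tangent functor on étale maps and quotients, together with the kernel correspondence of Proposition~\ref{prop:kernel}.
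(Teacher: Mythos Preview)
Your proof is correct and follows essentially the same route as the paper: identify $\widetilde{TG}=T\tilde G$, identify the kernel of $T\tilde G\to TG$ with $T\tilde K$, and then show that every closed swind subgroupoid $K'\subset T\tilde K$ is of the form $TK$ for a closed swind $K\subset\tilde K$ (the paper states this in one line, you spell it out via the projection $p$ and the zero section). One small slip: in your normality check for $p(K')$ you write ``for $g\in\tilde K$'', but normality must be checked for arbitrary $g\in\tilde G$; your argument with the zero section works verbatim in that generality, so this is only a typo.
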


\begin{proof}
Let $\hat K$ be the kernel of $\tilde G\to\hat G$, which is the intersection of all the swind subgroupoids $M\subset K\subset\tilde K$.
The groupoid $T\hat G$ is Hausdorff and projects into $TG$ via a Lie equivalence. It remains to show that $T\hat G$ is maximal in the sense of Theorem \ref{thm:HLie1}, or equivalently, that the intersection of all the closed swind subgroupoids $M\subset K'\subset T\tilde K$ is exactly $T\hat K$. Given such a $K'$, it is open and closed in $T\tilde K$, and therefore, a union of connected components, hence equal to $TK$ for some subgroupoid $K$ of $\tilde K$. It is easy to check that this $K$ must be closed, smooth, wide, intransitive, normal, and with discrete isotropy. Then $\hat K\subset K$ and the result follows.
\end{proof}


The second step is a linear version of Proposition \ref{prop:subalgbd}, showing that VB-subalgebroids correspond to invariant linear foliations. Given $E\to M$ a vector bundle, we say that $F$ is a {\bf linear foliation} on $E$ if it is invariant under the multiplication by scalars. This is equivalent to saying that $F\then E$ is a VB-algebroid over $F_0\then M$, the foliation restricted to the zero section. Note that $\mon(F)$ is canonically a vector bundle over $\mon(F_0)$.

\begin{lemma}
\label{lemma:linear-subalgbd}
Given $\Gamma\toto E$ a VB-groupoid and $S\subset E$ a vector subbundle, there is a 1-1 correspondence between VB-subalgebroids $B\then S$ of $A_\Gamma\then E$ and $\Gamma$-invariant linear foliations $F$ on $t^{-1}(S)\subset \Gamma$.
\end{lemma}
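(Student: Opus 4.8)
The plan is to leverage the non-linear correspondence already established in Proposition \ref{prop:subalgbd} and simply check that it is compatible with the extra linear structure on both sides. Concretely, given $\Gamma \toto E$ a VB-groupoid and $S \subset E$ a vector subbundle, the forgetful functor discards the vector bundle structures and leaves an ordinary Lie groupoid $\Gamma \toto E$ together with the closed submanifold $S \subset E$. Proposition \ref{prop:subalgbd} then gives a bijection between Lie subalgebroids $B \then S$ of $A_\Gamma \then E$ and $\Gamma$-invariant foliations $F$ on $t^{-1}(S) \subset \Gamma$. So the only thing to prove is that, under this bijection, $B$ is a VB-subalgebroid (i.e.\ a subbundle for the linear structure on $A_\Gamma$ over the subbundle $S$ of $E$) precisely when $F$ is a \emph{linear} foliation in the sense just defined, i.e.\ invariant under scalar multiplication on the vector bundle $t^{-1}(S) \to t^{-1}(S_0)$, where $S_0 = S \cap M$.

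The key observation making this routine is that all the constructions in the proof of Proposition \ref{prop:subalgbd} are built from morphisms that respect the scalar multiplications. A VB-groupoid $\Gamma \toto E$ over $G \toto M$ carries, by definition, a scalar action $h_\lambda \colon \Gamma \to \Gamma$ by Lie groupoid automorphisms covering $h_\lambda \colon E \to E$; differentiating, $A_\Gamma \then E$ inherits a scalar action by Lie algebroid morphisms covering that on $E$, which is exactly the VB-algebroid structure of $A_\Gamma$ over $A_G$. The pullback foliation $F^s$ on $\Gamma$ by source-fibers is the pullback $t^*A_\Gamma$, hence carries the pulled-back scalar action, making it a $\Gamma$-invariant linear foliation on $\Gamma \to G$, and $A_\Gamma = F^s/\Gamma$ recovers the linear structure after quotienting. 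Now one direction: given a VB-subalgebroid $B \then S$, the fibered product $F = t^*B \subset F^s$ is formed from $h_\lambda$-equivariant maps, so $F$ is stable under the scalar action, i.e.\ $F$ is a $\Gamma$-invariant linear foliation on $t^{-1}(S)$. Conversely, given a $\Gamma$-invariant linear foliation $F$ on $t^{-1}(S)$, the Godement quotient $B = F/\Gamma$ over $S = t^{-1}(S)/\Gamma$ inherits the scalar action descended from $F$ — here one uses that the scalar maps $h_\lambda$ commute with the $\Gamma$-action and with $t$, so they descend to the quotient — and this action exhibits $B$ as a VB-subalgebroid of $A_\Gamma$ over the subbundle $S \subset E$. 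Finally, since the two maps $B \mapsto F$ and $F \mapsto B$ are already mutually inverse at the level of the underlying Lie algebroids and foliations by Proposition \ref{prop:subalgbd}, they remain mutually inverse when restricted to the VB-subalgebroids and linear foliations.

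One small point deserving a line of its own: one should check that a $\Gamma$-invariant linear foliation $F$ on $t^{-1}(S)$ really is $h_\lambda$-invariant as a \emph{foliation} (not merely as a submanifold), i.e.\ that $h_\lambda$ sends leaves to leaves; but this is automatic, since $h_\lambda$ restricted to the fibers of $t^{-1}(S) \to t^{-1}(S_0)$ is a diffeomorphism and $F$ being a subalgebroid of $F^s$ that is $h_\lambda$-stable as a subbundle forces the distribution it defines to be $h_\lambda$-related to itself. I expect the main (mild) obstacle to be bookkeeping the compatibility of the scalar action with the $\Gamma$-action and with the Godement quotient simultaneously — in particular making sure that $S = t^{-1}(S)/\Gamma$ is genuinely a subbundle of $E = t^{-1}(S_0)/\Gamma$ — but this is a direct consequence of Godement criterion applied equivariantly, exactly as in the non-linear case. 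No new geometric input beyond Proposition \ref{prop:subalgbd} is needed.
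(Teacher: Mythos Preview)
Your proof is correct and follows essentially the same approach as the paper: the paper's proof is a one-liner invoking Proposition \ref{prop:subalgbd} together with the characterization of VB-groupoids and VB-algebroids as ordinary Lie groupoids and algebroids equipped with a compatible action of the multiplicative monoid $(\R,\cdot)$ \cite{bcdh}, and what you have written is precisely an unpacking of that sentence, tracking the scalar action $h_\lambda$ through the two mutually inverse constructions. Your additional remarks about the Godement quotient and the leafwise compatibility are correct but not strictly needed once one accepts the $(\R,\cdot)$-action viewpoint wholesale.
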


\begin{proof}
It follows by combining Proposition \ref{prop:subalgbd} with the characterization of VB-groupoids and VB-algebroids as Lie groupoids and Lie algebroids endowed with a regular compatible action of the multiplicative monoid $(\R,\cdot)$ \cite{bcdh}.
\end{proof}


A peculiarity about linear foliations is that the holonomy at the zero section somehow controls the holonomy on the total space. Intuitively, if there is a loop $\gamma$ with non-trivial holonomy, then it has a transverse dislocation  $\gamma'$ that ceases to be a loop, and therefore, the paths $\epsilon\gamma'$ are transverse dislocations of $0\gamma$ which are not loops, proving that $0\gamma$ has also holonomy. We give a concise proof using our algebraic approach to holonomy.

\begin{lemma}\label{lemma:linear-foliations}
Let $E\to M$ be a vector bundle and $F\subset TE$ a linear foliation. If $\hol_x(F)=0$ for some $x\in M$ then $\hol_e(F)=0$ for every $e\in E_x$.
\end{lemma}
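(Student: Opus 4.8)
The strategy is to exploit the algebraic description of holonomy developed in Section \ref{sec:holonomy}, together with the linear structure. Write $L_x$ for the leaf of $F$ through $0_x$ (equivalently, the leaf of $F_0$ through $x$, which is canonically a subset of $M$ via the projection $E\to M$), and recall that $\mon(F)$ is a vector bundle over $\mon(F_0)$; in particular the source-fibers and isotropy groups of $\mon(F)$ are vector bundles over those of $\mon(F_0)$. An isotropy element $g\in\mon_{0_x}(F)$ lies over some loop class $\bar g\in\mon_x(F_0)=\pi_1(L_x,x)$, and the holonomy $\gamma^g_{UU}$ is the germ of a diffeomorphism of a transversal to $L_x$ in $E$. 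The claim $\hol_e(F)=0$ for all $e\in E_x$ is equivalent, by the discussion preceding Proposition \ref{prop:min-max}, to saying that $s\colon I(\mon(F))\to E$ is locally surjective at every $g$ in the isotropy fiber over $e$.

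\textbf{Key steps.} First, I would reduce to the case $e=0_x$ is replaced by an arbitrary $e\in E_x$, and observe that an isotropy element at $e$ is determined by a leafwise loop $\bar g$ (in the base leaf $L_x$) together with linear-algebraic data: since $F$ is linear, the holonomy transformation $\gamma^g$ respects the scalar multiplication, hence restricts to a linear isomorphism of the fiber direction $E$ transverse to $L_x$, composed possibly with a transformation in the ``horizontal'' transverse directions of the base. Second, I would use the hypothesis $\hol_x(F)=0$: this says exactly that for $\bar g\in\pi_1(L_x,x)$, the holonomy of the base foliation $F_0$ along $\bar g$ is trivial — so the horizontal transverse part of $\gamma^g$ is the identity germ. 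Third, I must show that the remaining linear part (the action on the fibers $E$) is also trivial. This is where linearity does the work: the holonomy $\gamma^g$ of $F\subset TE$ along a loop projecting to a null-holonomy loop of $F_0$ is, up to choosing appropriate foliated charts near $0_x$ in $E$, a \emph{linear} germ in the fiber coordinates. But a germ of diffeomorphism at $0_x$ that is both linear in the fiber directions and has a fixed direction along the transversal... — I would argue that since $F_0$ has trivial holonomy at $x$, we can find a local transversal $T_0\subset M$ to $L_x$ such that holonomy transport along $\bar g$ is the identity on $T_0$; then $t^{-1}(T_0)\cap E$, which is an honest transversal $T$ to $L_x$ in $E$, carries a linear holonomy transport, and this transport must be the identity because $0_x\in T$ is a fixed point and, by examining the construction of the foliated chart $(\tilde U,\tilde\phi)$ around $g$ in the proof of Proposition \ref{prop:mon-hol}, the linear germ is in fact constant along the zero section — forcing it to be $\id$ everywhere on $T$. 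Hence $\gamma^g_{UU}$ is the trivial germ for every choice of $g$ over $E_x$, so $\hol_e(F)=0$.

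\textbf{Main obstacle.} The delicate point is the third step: carefully justifying that linearity of the foliation forces the \emph{fiber part} of the holonomy germ to be trivial once the base holonomy is trivial. The intuitive picture given in the paragraph preceding the lemma — dislocating a non-loop $\gamma'$ by $\epsilon$ to see that $0\gamma$ acquires holonomy — is really the contrapositive, and converting it into a clean argument via the algebraic framework requires setting up the foliated charts on $E$ compatibly with the linear structure (i.e. using charts of the form $\phi_1\times\phi_2$ where $\phi_1$ is fiberwise linear), and then checking that the transverse transition maps $\gamma^x_{U'U}$ of $F$ are linear in the fiber coordinate whenever those of $F_0$ are trivial, so that their composition — the holonomy — is linear and fixes the origin. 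Once the charts are chosen well this is essentially bookkeeping, but choosing them is the crux; I would spend most of the write-up there, and only briefly invoke Proposition \ref{prop:kernel} / the equivalence ``$\hol=0$ iff $s\colon I(\mon(F))\to E$ locally bijective'' to phrase the conclusion.
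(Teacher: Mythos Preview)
There is a genuine gap in how you read the hypothesis. You assert that ``$\hol_x(F)=0$ says exactly that for $\bar g\in\pi_1(L_x,x)$, the holonomy of the base foliation $F_0$ along $\bar g$ is trivial'', but this is an under-reading. The group $\hol_x(F)$ is computed in $E$: the leaf through $0_x$ is the zero section over $L_x$, a transversal to it in $E$ contains both the base-transverse directions and the whole fiber $E_x$, and the holonomy germ at $0_x$ records both the $F_0$-holonomy \emph{and} the linear action on the fiber. So the hypothesis already hands you triviality of the ``fiber part'' at $0_x$ --- you do not need to derive it from linearity, and in fact you cannot: a flat connection on a line bundle over $S^1$ with monodromy $\lambda\neq 1$ produces a linear foliation with $\hol_x(F_0)=0$ but $\hol_{0_x}(F)\neq 0$. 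Your announced ``main obstacle'' (deduce fiber-triviality from base-triviality plus linearity) is therefore both unnecessary and false as stated; the remark ``fixes $0_x$ and is linear, hence identity'' fails for the same reason.

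What genuinely remains, once the hypothesis is used correctly, is to propagate triviality of the holonomy germ from $0_x$ to an arbitrary $e\in E_x$. Here your instinct is sound: by linearity, the holonomy along any loop projecting to $\bar g$ is the germ, at the relevant basepoint, of one and the same fiberwise-linear self-map $H_{\bar g}$ of the transversal $E|_{T_0}$. The hypothesis says the germ of $H_{\bar g}$ at $0_x$ is trivial, so $H_{\bar g}=\id$ on a neighborhood of $0_x$; fiberwise linearity then forces $H_{\bar g}=\id$ on all of $E|_{T_0'}$ for a smaller $T_0'$, so the germ at $e$ is trivial too. The paper runs this same idea through the algebraic characterization rather than through germs: $s\colon I(\mon(F))\to E$ is a vector bundle map over $s\colon I(\mon(F_0))\to M$, and a vector bundle map that is locally bijective at a zero-section point $0g$ is a fiberwise linear isomorphism there and locally bijective on the base, hence locally bijective at every point of that fiber --- in particular at $g$.
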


\begin{proof}
Since $\hol_x(F)=0$, we know that $s:I(\mon(F))\to M$ is locally bijective at $x=\id_x$. Let $e\xfrom g e$ be a loop at $e\in E_x$, and let $x\xfrom{0g}x$ its projection on the zero section. Since $0g$ has no holonomy, we know that $s:I(\mon(F))\to E$ is locally bijective at $0g$. But this is a vector bundle map over $s:I(\mon(F_0))\to M$, and therefore, it has to be a linear isomorphism between the fibers $I(\mon(F))_{0g}\to E_x$ and locally bijective at $0g$ in the base. This proves that the vector bundle map is also locally bijective around $g$ and at any other point over $0g$.
\end{proof}


We are ready to present the main result of the section, roughly saying that if a Poisson manifold is integrable by a Hausdorff groupoid, then it is also integrable by a Hausdorff symplectic one.

\begin{theorem}\label{thm:application}
Given $(M, \pi)$ a Poisson manifold, if the induced Lie algebroid $A\then M$ is integrable by a Hausdorff groupoid $G\toto M$, then $\hat G\toto M$ is a Hausdorff symplectic groupoid.
\end{theorem}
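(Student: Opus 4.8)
The plan is to mimic the proof of Proposition \ref{prop:application1}, but applied to $\hat G$ rather than $\tilde G$, using the machinery developed just above. The key point is that the canonical symplectic form $\omega_{can}$ on $A = T^*M$ is a VB-algebroid isomorphism $\omega^b_{can}: TA \to T^*A$, and we want to integrate it to a multiplicative symplectic form on $\hat G$. Rather than integrate $\omega^b_{can}$ directly via a Hausdorff Lie 2 (which would require checking the holonomy hypothesis of Theorem \ref{thm:HLie2v1}), I would integrate its \emph{graph}, exploiting that the relevant foliation is linear and hence controlled by its holonomy on the zero section, which is trivial because that zero-section foliation is simply the source-fiber foliation $F^s$ of $\hat G$ (which has no holonomy).

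\begin{proof}
By Lemma \ref{lemma:hatTG}, $T\hat G$ is the maximal Hausdorff integration of $TA$ over $TG$. The cotangent $T^*\hat G$ is a Hausdorff Lie groupoid over $A^*$ integrating $T^*A \then A^*$; indeed $T^*\hat G \to T^*G$ is a Lie equivalence (it is the dual of the VB-groupoid Lie equivalence $T\hat G \to TG$ and duality preserves Lie equivalences of VB-groupoids). Consider the graph $B \then S$ of the VB-algebroid isomorphism $\omega^b_{can}: TA \to T^*A$, a non-wide VB-subalgebroid of $TA \times T^*A \then TM \times A^*$. By Lemma \ref{lemma:linear-subalgbd} it corresponds to a $(T\hat G \times T^*\hat G)$-invariant linear foliation $F^{\omega}$ on $(t \times t)^{-1}(S) \subset T\hat G \times T^*\hat G$. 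Its restriction to the zero section is the foliation corresponding to the graph of the identity of $A$, i.e.\ the diagonal-type subalgebroid, whose associated foliation on $\hat G$ is the source-fiber foliation $F^s$ of $\hat G$, which has no holonomy. By Lemma \ref{lemma:linear-foliations}, $F^{\omega}$ itself has trivial holonomy. Hence by Corollary \ref{cor:trivial-holonomy} the immersion $\alpha: H_{min} \to T\hat G \times T^*\hat G$ from Proposition \ref{prop:min-max-2} is injective, and since $T\hat G \times T^*\hat G$ is Hausdorff, $H_{min}$ is Hausdorff.

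Now $pr_1 \alpha: H_{min} \to T\hat G$ is a Lie equivalence between Hausdorff groupoids integrating $TA$; but $T\hat G$ need not be the maximal Hausdorff integration of $TA$ as an abstract algebroid, so we work relatively. Both $H_{min}$ and $T\hat G$ sit over $TG$: composing $pr_1 \alpha$ with $T\hat G \to TG$ gives a Lie equivalence from the Hausdorff groupoid $H_{min}$ to $TG$, and by the maximality of $T\hat G$ over $TG$ (Lemma \ref{lemma:hatTG}, applied as in Theorem \ref{thm:HLie1}) there is a section $\beta: T\hat G \to H_{min}$ of $pr_1 \alpha$. Set $\omega^b = pr_2 \alpha \beta: T\hat G \to T^*\hat G$. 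This is a groupoid morphism integrating $\omega^b_{can}$, hence a fiberwise-linear, multiplicative, nondegenerate bundle map covering the identity; equivalently it is (the flat of) a multiplicative $2$-form $\omega$ on $\hat G$ with $\omega^b$ nondegenerate. Closedness of $\omega$ and the compatibility of $\omega$ with the groupoid structure follow exactly as in Proposition \ref{prop:application1} and \cite{bco}, since integrating the algebroid morphism $\omega^b_{can}$ produces precisely a VB-groupoid morphism $(T\hat G \toto M) \to (T^*\hat G \toto A^*)$, which is one of the equivalent characterizations of a symplectic groupoid structure. Uniqueness of $\omega$ follows from uniqueness of the integration. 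Therefore $(\hat G, \omega)$ is a Hausdorff symplectic groupoid.
\end{proof}

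The main obstacle I anticipate is the relative maximality bookkeeping: one must integrate $\omega^b_{can}$ not to a morphism out of the abstract maximal Hausdorff integration of $TA$, but to one out of $T\hat G$ specifically, because only $T\hat G$ is known to be the tangent of $\hat G$. This is exactly why Lemma \ref{lemma:hatTG} is needed, and why I route the argument through the section $\beta$ coming from maximality \emph{over $TG$} rather than applying Theorem \ref{thm:HLie2v1} off the shelf. A secondary point requiring care is the identification of the zero-section restriction of the graph foliation $F^{\omega}$ with $F^s$ on $\hat G$ (so that Lemma \ref{lemma:linear-foliations} applies), and the verification that the resulting bundle morphism $T\hat G \to T^*\hat G$ is genuinely skew and closed — but these are local/infinitesimal checks that descend from the corresponding statements for $\omega_{can}$, already established in \cite{bco}.
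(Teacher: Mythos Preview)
Your overall strategy matches the paper's: integrate the graph of $\omega^b_{can}$ inside $T\hat G\times T^*\hat G$, use linearity (Lemmas \ref{lemma:linear-subalgbd} and \ref{lemma:linear-foliations}) to reduce the holonomy check to the zero section, then invoke Corollary \ref{cor:trivial-holonomy} and the maximality of $T\hat G$ from Lemma \ref{lemma:hatTG} to produce the section $\beta$. The relative-maximality bookkeeping you flag is exactly how the paper uses Lemma \ref{lemma:hatTG}, and your handling of it is fine.

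There is, however, a genuine gap at the holonomy step. Lemma \ref{lemma:linear-foliations} requires $\hol_x(F^\omega)=0$ for $x$ in the zero section, meaning the holonomy of the \emph{total-space} foliation $F^\omega$ at the point $0_x$. What you argue instead is that the \emph{base} foliation $F_0$ (on $\hat G\times_M\hat G$, not on $\hat G$: the zero section of $(t\times t)^{-1}(S)$ consists of pairs $(0_g,0_h)$ with $t(g)=t(h)$) has trivial holonomy. These are different statements. A transversal to $F^\omega$ at $0_x$ splits into a base direction, governed by $\hol_x(F_0)$, and a fibre direction, on which a zero-section loop acts by the linear holonomy of the flat connection that $F^\omega$ induces along the $F_0$-leaf. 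Your argument kills the base part but says nothing about this fibre part; a non-trivial flat line bundle over a simply foliated base already shows the implication you are using can fail.

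The paper closes exactly this gap by an explicit computation: it reads $F^\omega$ as an Ehresmann connection for $\pi\colon (t\times t)^{-1}(S)\to T\hat G$, writes the ODE for horizontal lifts, and checks that for a zero-section loop $\gamma=(g_r,0,h_r,0)$ any other horizontal lift satisfies $h'_r=g_r g_0^{-1}h'_0$ and $\alpha'_r=0_{h'_r(h'_0)^{-1}}\cdot\alpha'_0$, hence closes up when $\gamma$ does. You need either this computation or a conceptual substitute showing the induced linear connection along $F_0$-leaves is flat with trivial monodromy before Lemma \ref{lemma:linear-foliations} can be invoked.
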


\begin{proof}
We may suppose $G=\hat G$. We want to integrate the isomorphism of Lie algebroids
$\varphi=\omega_{can}^b:(TA\then TM)\to (T^*A\then A^*)$ 
to an isomorphism of Lie groupoids $TG\to T^*G$ defining a multiplicative symplectic form on $G$. 
Writing $B\then S$ for the graph of $\varphi$, which is a VB-subalgebroid of $TA\times T^*A\then TM\times A^*$, by Theorem \ref{thm:HLie2v1} and Lemma \ref{lemma:hatTG}, we just need to show that the foliation $F^\varphi$ on $(t\times t)^{-1}(S)\subset TG\times T^*G$ has no holonomy. 

Since $B\to A_{TG}\times A_{T^*G}\to A_G$ is an isomorphism, the map $\pi:(t\times t)^{-1}(S)\subset TG\times T^*G\to TG$ is \'etale when restricted to each leaf of $F^\varphi$, so the fibers of $\pi$ are transversal to $F^\varphi$, and it defines an Ehresmann connection. To see that $F^\varphi$ has no holonomy it is enough to show that if $\gamma$ is a horizontal loop for that connection, then evey horizontal lift of $\pi\gamma$ is also a loop.
The foliation $F^\varphi$ is linear by Lemma \ref{lemma:linear-subalgbd}, so we can suppose that $\gamma$ is in the zero section by Lemma \ref{lemma:linear-foliations}.


It is convenient to recall the exact sequence 
$0 \to A_y\xto{d R_g}T_gG \xto{ds} T_xM\to 0$
from where the formulas for the source and target maps of $TG$ and $T^*G$ can be derived \cite{bcdh}.
Given $y\xfrom g x$ and $y\xfrom h z$ in $G$, and given $v\in T_gG$ and $\alpha\in T^*_hG$, it follows that 
$$
(v,\alpha)\in (t\times t)^{-1}(S)
\iff
\varphi(dt(v))=(dR_h)^*(\alpha\bar_{G(-,z)})\in A_y^*.$$ 
Then a curve $\gamma(r)=(g_r, v_r, h_r, \alpha_r)\in (t\times t)^{-1}(S)$ is in a leaf of $F^\varphi$, or in other words, it is horizontal for the Ehresmann connection, if and only if for every $r_0$ we have
$$\varphi\left[
\left.\frac{d}{dr}\right\rvert_{r=r_0}(g_r,v_r)(g_{r_0},v_{r_0})^{-1}
\right]
=
\left.\frac{d}{dr}\right\rvert_{r=r_0}(h_r,\alpha_r)(h_{r_0},\alpha_{r_0})^{-1}
$$
where the multiplications are in $TG$ and $T^*G$, respectively.

So let $\gamma$ be a horizontal loop such that $\gamma(r)=(g_r, 0, h_r, 0 )$, namely the loop sits within the zero section. If $\gamma'$ is another horizontal lift for $\pi\gamma$, then $\gamma'(0)=(g_r,0,h'_0,\alpha_0')$, and the uniqueness of solutions in the above differential equation readily implies
$$h'_r=g_rg_0^{-1}h'_0 \qquad \alpha'_r=(0_{h'_r(h'_{r_0})^{-1}})\alpha'_0.$$
Note that the product $0_{h'_r}\alpha'_0$ in $T^*G$ makes sense because 
$t(\alpha'_0)=(dR_h)^*(\alpha'_0\bar_{G(-,z)})=\varphi(dt(v_0))$ and $v_0=0$. 
Finally, since $\gamma(1)=\gamma(0)$ we have that $g_1=g_0$, and this applied to the explicit formulas for $\gamma'$ show that $\gamma'(1)=\gamma'(0)$, namely that $\gamma'$ is also a loop, so $F^\varphi$ has no holonomy.

It is straightforward to check that the integrated morphism $\phi:TG\to T^*G$ is indeed $\omega^b$ where $\omega$ is a multiplicative symplectic form on $G$.
\end{proof}

\begin{corollary}\label{cor:application}
A Poisson manifold with a Hausdorff integration admits a Hausdorff complete symplectic realization.
\end{corollary}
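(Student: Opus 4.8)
The plan is to derive this corollary directly from Theorem \ref{thm:application} together with the dictionary between symplectic groupoids and complete symplectic realizations. First I would recall that, given a Poisson manifold $(M,\pi)$, the statement that $A=T^*M$ is integrable is equivalent, by \cite{cf2}, to the existence of a complete symplectic realization; but here we are given something stronger, namely a \emph{Hausdorff} integration $G\toto M$. Applying Theorem \ref{thm:application} to this $G$, we obtain that $\hat G\toto M$, the maximal Hausdorff integration supplied by Theorem \ref{thm:HLie1}, carries a multiplicative symplectic form $\omega$, so $(\hat G,\omega)$ is a Hausdorff symplectic groupoid integrating $(M,\pi)$.

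The second step is to extract the symplectic realization from the symplectic groupoid. Recall that for any symplectic groupoid $(\Gamma\toto M,\omega)$ the source map $s\colon\Gamma\to M$ is a Poisson submersion (where $\Gamma$ carries the Poisson structure induced by $\omega$), and it is a \emph{complete} symplectic realization: the fibers of $s$ are symplectic orthogonal to the fibers of $t$, and completeness follows because the Hamiltonian flow of $s^*f$ for any compactly supported $f$ on $M$ is complete, being generated by right-translations of the $t$-fibers. This is classical, going back to \cite{cdw}; I would simply cite it. Taking $\Gamma=\hat G$, the source map $s\colon\hat G\to M$ is then a complete symplectic realization of $(M,\pi)$.

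The only remaining point to address is the Hausdorffness of the realizing symplectic manifold: a complete symplectic realization is a surjective submersion $\mu\colon(X,\omega_X)\to(M,\pi)$ from a symplectic manifold $X$ with the relevant completeness property, and the definition implicitly wants $X$ to be a manifold in the usual (Hausdorff) sense. Since $\hat G$ is Hausdorff by construction in Theorem \ref{thm:HLie1}, the symplectic manifold $X=\hat G$ is Hausdorff, and we are done. I do not anticipate a genuine obstacle here: the entire content of the corollary is already carried by Theorem \ref{thm:application}, and what is left is the standard translation from the groupoid picture to the realization picture, which requires no new argument. If one wished to be fully self-contained, the mildly technical point would be verifying completeness of the Hamiltonian flows on $\hat G$, but this is exactly the computation underlying the equivalence in \cite{cf2} and \cite{cdw} and need not be repeated.

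\begin{proof}
Let $G\toto M$ be a Hausdorff integration of the Lie algebroid $A=T^*M$ induced by $(M,\pi)$. By Theorem \ref{thm:application}, the maximal Hausdorff integration $\hat G\toto M$ of Theorem \ref{thm:HLie1} carries a multiplicative symplectic form $\omega$, so $(\hat G,\omega)$ is a Hausdorff symplectic groupoid integrating $(M,\pi)$. As recalled above, the source map $s\colon\hat G\to M$ of a symplectic groupoid is a complete symplectic realization of the Poisson manifold $(M,\pi)$ (see \cite{cdw,cf2}). Since $\hat G$ is Hausdorff, this realization is Hausdorff.
\end{proof}
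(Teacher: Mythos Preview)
Your proof is correct and follows essentially the same route as the paper's own argument: apply Theorem \ref{thm:application} to obtain the Hausdorff symplectic groupoid $\hat G$, then invoke the standard fact (citing \cite{cdw,cf2}) that the source map of a symplectic groupoid is a complete symplectic realization. The paper's proof is terser but identical in content.
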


\begin{proof}
Let $(M,\pi)$ be the Poisson manifold, $A\then M$ its induced algebroid, and $G\toto M$ a Hausdorff integration. Then $\hat G\toto M$ is a Hausdorff symplectic groupoid, and therefore, $s:\hat G\to M$ is a Hausdorff complete symplectic realization \cite{cf2}.
\end{proof}

We conjecture that a Hausdorff symplectic complete realization gives rise to a Hausdorff symplectic groupoid. This converse of the previous corollary would give a Hausdorff version of \cite[Thm 8]{cf2}.  We plan to address this problem elsewhere.


\appendix

\section{Quotients of (non-Hausdorff) manifolds}


We include in this appendix a proof of the Godement criterion for quotients of smooth manifolds, with emphasis in the non-Hausdorff case, and a lemma on Lie groupoid actions which is used along the paper.
Our treatment is alternative and complementary to the Hausdorff version of \cite[\S 9]{rui} and the analytic version in \cite[II.3.12]{serre}.

Our manifolds $M$ are allowed to be non-Hausdorff, unless otherwise specified.
Our main tool in proving the criterion is the construction of fibered products between transverse smooth maps.



\begin{lemma}\label{lemma:pull-back}
Let $f_1:M_1\to N$ and $f_2:M_2\to N$ be transverse smooth maps. Then the fibered product $M_1\times_N M_2\subset M_1\times M_2$ is embedded, and for every $(x_1,x_2)\in M_1\times_N M_2$ the following sequence is exact:
$$0\to T_{(x_1,x_2)}(M_1\times_N M_2)\to T_{x_1}M_1\times T_{x_2}M_2\xto{df_1\pi_1-df_2\pi_2} T_xN\to 0.$$
Moreover, if $f_1$ is an embedding, then so does its base-change $M_1\times_N M_2\to M_2$.
\end{lemma}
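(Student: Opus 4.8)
The statement is the standard transversality construction for fibered products of smooth manifolds, and the plan is to verify it by a local-charts argument, being careful that Hausdorffness is never used (it is not needed: embeddedness and exactness are local conditions). First I would recall that transversality of $f_1,f_2$ means that for every pair $(x_1,x_2)$ in the fibered product, with $x=f_1(x_1)=f_2(x_2)$, the map $df_1\oplus df_2: T_{x_1}M_1\times T_{x_2}M_2\to T_xN$, $(v_1,v_2)\mapsto df_1(v_1)-df_2(v_2)$, is surjective. Fix such a point and choose charts: a chart $(W,\psi)$ for $N$ around $x$ with $\psi(x)=0$, and charts $(U_i,\phi_i)$ for $M_i$ around $x_i$ with $f_i(U_i)\subset W$. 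Working in these charts, $f_1,f_2$ become smooth maps between open subsets of Euclidean spaces, and the fibered product locally is the zero set of $h:=\psi f_1\pi_1-\psi f_2\pi_2:U_1\times U_2\to\R^n$ (where $n=\dim N$), which by construction has surjective differential at $(x_1,x_2)$; shrinking the charts, $h$ is a submersion on all of $U_1\times U_2$. By the submersion/regular-value theorem (which is a purely local statement, valid for non-Hausdorff manifolds since it only concerns a single chart), $h^{-1}(0)=(U_1\times U_2)\cap(M_1\times_N M_2)$ is an embedded submanifold of $U_1\times U_2$ of dimension $\dim M_1+\dim M_2-n$, with tangent space $\ker dh_{(x_1,x_2)}$. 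This gives both the embeddedness (local embeddings that agree on overlaps glue, and $M_1\times_N M_2$ is locally closed in $M_1\times M_2$ by this description) and the exactness of the displayed sequence, since $\ker(df_1\pi_1-df_2\pi_2) = T_{(x_1,x_2)}(M_1\times_N M_2)$ and $df_1\pi_1-df_2\pi_2$ is onto $T_xN$.

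For the last assertion, suppose $f_1$ is an embedding. The base-change map $p_2:M_1\times_N M_2\to M_2$ is $(x_1,x_2)\mapsto x_2$; I would show it is an injective immersion that is a homeomorphism onto its image (which is $f_2^{-1}(f_1(M_1))$, locally closed in $M_2$). Injectivity: if $(x_1,x_2)$ and $(x_1',x_2)$ both map to $x_2$, then $f_1(x_1)=f_2(x_2)=f_1(x_1')$, and $f_1$ injective gives $x_1=x_1'$. Immersivity: from the exact sequence, $dp_2$ on $T_{(x_1,x_2)}(M_1\times_N M_2)$ has kernel the set of $(v_1,0)$ with $df_1(v_1)=0$, which is $0$ since $f_1$ is an immersion. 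For the homeomorphism-onto-image property, I would again pass to the charts above: since $f_1$ is an embedding we may take $\phi_1(U_1)$ to be (an open subset of) a linear slice $\R^{m_1}\times\{0\}\subset\R^{m_1}\times\R^{n-\text{(something)}}$ inside $\psi(W)$, and then $h^{-1}(0)$ in $U_1\times U_2$ is realized as a graph over (an open subset of) $U_2$, namely over $f_2^{-1}(U_1)$, with $x_1$ determined smoothly by $x_2$; the projection $p_2$ restricted to this chart is exactly the graph projection, hence a diffeomorphism onto an open subset of $f_2^{-1}(f_1(M_1))$. Gluing these local pictures shows $p_2$ is an embedding onto the locally closed subset $f_2^{-1}(f_1(M_1))\subset M_2$.

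The only point requiring genuine care — and what I expect to be the main (mild) obstacle — is making sure the \emph{global} topological statements (``embedded'', i.e.\ the subspace topology agrees with the manifold topology, and homeomorphism-onto-image for the base-change) really do follow from the local charts \emph{without} invoking Hausdorffness of $M_1,M_2,N$ anywhere: the regular-value theorem and the constant-rank/graph descriptions are all strictly local and chart-based, so each local piece is a genuine embedded chart, and embeddedness of a subset follows from it being locally embedded together with being locally closed, both of which the chart description supplies. I would state this explicitly to reassure the reader, perhaps noting that non-Hausdorff phenomena (non-unique limits of sequences) are irrelevant to a condition that can be checked in a neighborhood of each point. Everything else is routine linear algebra (the five-term exactness is immediate once one identifies the tangent space with the kernel) and bookkeeping with charts.
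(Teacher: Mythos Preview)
Your proposal is correct and follows essentially the same approach as the paper: define the local function $F(x_1,x_2)=\varphi(f_1(x_1))-\varphi(f_2(x_2))$ in a chart of $N$, observe transversality makes $0$ a regular value, and apply the constant-rank/regular-value theorem to get embeddedness and the tangent-space identification. For the base-change statement the paper is slightly slicker---it works in coordinates where $f_1$ is the slice inclusion $\R^p\hookrightarrow\R^p\times\R^q$ and observes directly that $M_1\times_N M_2$ identifies with $(\pi_2 f_2)^{-1}(0)\subset M_2$, a regular level set of the submersion $\pi_2 f_2$, so embeddedness follows in one stroke rather than via your separate injectivity/immersivity/homeomorphism checks---but this is a presentational difference, not a different argument.
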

\begin{proof}
Given a local chart $U\xto{\varphi} \R^n$ of $N$, let $V=f_1^{-1}(U)\times f_2^{-1}(U)$, and consider the function $F:V\to \R^n$ given by $F(x_1, x_2)=\varphi(f_1(x_1))-\varphi(f_2(x_2))$. Observe that $0$ is a regular value of $F$, for $dF=d\varphi (df_1\pi_1-df_2\pi_2)$ and $f_1 \pitchfork f_2$. It follows from the constant rank theorem that $F^{-1}(0)=(M_1\times_N M_2)\cap V\subset V$ is embedded with tangent space $\ker dF=\ker (df_1\pi_1-df_2\pi_2)$.

Suppose now that $f_1$ is an embedding. 
Working locally, we can assume that $M_1=\R^p$, $N=\R^{p+q}$ and that $f_1$ is just the inclusion $\R^p\to \R^p\times\R^q$, $x\mapsto (x,0)$.
Then $M_1\times_N M_2$ identifies with the preimage of $0$ along $\pi_2 f_2:M_2\to\R^q$. This composition is a submersion by the transversality hypothesis. The result follows by the constant rank theorem.
\end{proof}

Given $M$ a manifold, possibly non-Hausdorff, and $R\subset M\times M$ an equivalence relation, if the quotient $M/R$ admits a manifold structure so that  the projection $\pi:M\to M/R$ is a submersion, then it easily follows from \ref{lemma:pull-back} that $R=M\times_{M/R}M\subset M\times M$ is an embedded submanifold and that the projection $\pi_1:R\to M$ is a surjective submersion.  
It turns out that these necessary conditions for $R$ are also sufficient to garantee that $M/R$ is indeed a manifold. 

\begin{proposition}[Godement criterion]\label{prop:godement}
Let $M$ be a manifold and $R\subset M\times M$ be an equivalence relation that is an embedded submanifold and makes $\pi_2:R\to M$ a submersion. Then $M/R$ inherits a unique canonical smooth structure such that the projection $\pi:M\to M/R$ is a submersion. Moreover, $M/R$ is Hausdorff if and only if $R\subset M\times M$ is closed.
\end{proposition}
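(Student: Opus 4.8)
## Proof proposal for the Godement criterion

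The plan is to build the smooth structure on $M/R$ by hand, using a family of local sections of $\pi$ as coordinate patches, and then to verify the submersion property and the Hausdorff criterion. First I would fix some notation: write $\pi_1,\pi_2\colon R\to M$ for the two projections, which are surjective submersions (surjectivity is reflexivity of $R$; $\pi_2$ is a submersion by hypothesis, and $\pi_1$ is one too by symmetry of $R$, i.e. by composing with the diffeomorphism of $R$ swapping coordinates). The key local input is a \emph{slice}: given $x\in M$, since $\pi_1\colon R\to M$ is a submersion I can choose an embedded submanifold $x\in \Sigma\subset M$ transverse to the fibers of $\pi_2$ restricted near the relevant point, of complementary dimension, so that $\Sigma$ meets each nearby $R$-class in exactly one point. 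Concretely, pick a local section $\sigma$ of $\pi_1$ through $\mathrm{id}_x\in R$ (using that $\pi_1$ is a submersion) and set $\Sigma$ to be the image of $\pi_2\circ\sigma$; shrinking, $\pi|_\Sigma\colon\Sigma\to M/R$ is injective with open image, and these give the charts. The overlap maps between two such slices $\Sigma,\Sigma'$ are smooth because they factor through $R$: the transition $\Sigma\dashrightarrow\Sigma'$ sends $y$ to the unique $y'\in\Sigma'$ with $(y,y')\in R$, which is smooth since $R\cap(\Sigma\times\Sigma')$ is (by Lemma \ref{lemma:pull-back}, transversality holding by the slice condition) an embedded submanifold on which both projections are local diffeomorphisms. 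This produces a smooth atlas; one checks the topology it induces on $M/R$ agrees with the quotient topology using that $\pi$ is open (again because $\pi_1$ is an open map, being a submersion, and $\pi^{-1}(\pi(U))=\pi_1(\pi_2^{-1}(U))$).

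With the atlas in place, $\pi\colon M\to M/R$ is a submersion: in the slice chart $\Sigma$ near a point, $\pi$ is locally the composition $M\supset W\xrightarrow{} \Sigma$, $y\mapsto$ the point of $\Sigma$ equivalent to $y$, which is a submersion by the slice transversality. Uniqueness of the smooth structure is immediate: if $M/R$ carried another smooth structure making $\pi$ a submersion, then $\pi$ admits local sections, and comparing local sections shows the identity $M/R\to M/R$ is a local diffeomorphism between the two structures, hence they coincide. (Alternatively: a submersion is a final map in the smooth category, so the smooth structure for which $\pi$ is a submersion-with-local-sections is unique.)

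For the final equivalence: if $R\subset M\times M$ is not closed, pick $(x,y)\in\overline R\setminus R$; then $\pi(x)\neq\pi(y)$, but any neighborhoods of $\pi(x)$ and $\pi(y)$ pull back (via $\pi\times\pi$, using openness of $\pi$) to a neighborhood of $(x,y)$ in $M\times M$, which must meet $R$ since $(x,y)\in\overline R$, giving an $R$-equivalent pair and hence a common point of the two neighborhoods in $M/R$; so $M/R$ is not Hausdorff. Conversely, if $R$ is closed, then $R=(\pi\times\pi)^{-1}(\Delta_{M/R})$, and since $\pi\times\pi\colon M\times M\to (M/R)\times(M/R)$ is an open surjection, the diagonal $\Delta_{M/R}$ is closed in $(M/R)^2$ (its complement is the image of the open set $(M\times M)\setminus R$), i.e. $M/R$ is Hausdorff.

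The main obstacle I anticipate is the construction and gluing of the slices — making precise, in the possibly non-Hausdorff setting, that a local section of $\pi_1$ through $\mathrm{id}_x$ yields a genuine transversal meeting nearby classes exactly once, and that the induced charts are compatible and induce the quotient topology. The non-Hausdorff subtlety is that one cannot appeal to tubular-neighborhood or properness arguments; everything must be done by openness of $\pi$ and the embedded-submanifold statement of Lemma \ref{lemma:pull-back}. Once the atlas is correctly set up, the submersion property, uniqueness, and the Hausdorff dichotomy are comparatively routine.
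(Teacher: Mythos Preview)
Your overall architecture matches the paper's proof: construct local slices $\Sigma$ transverse to the orbits, use them as charts, verify compatibility via the embedded submanifold $R\cap(\Sigma\times\Sigma')$ and Lemma~\ref{lemma:pull-back}, check that $\pi$ is an open submersion, and finish with the Hausdorff dichotomy. The compatibility step, the openness argument $\pi^{-1}(\pi(U))=\pi_1(\pi_2^{-1}(U))$, the uniqueness, and the Hausdorff equivalence are all fine and essentially identical to the paper's Steps~3--5.

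The gap is in your construction of the slice. You write: ``pick a local section $\sigma$ of $\pi_1$ through $\mathrm{id}_x\in R$ and set $\Sigma$ to be the image of $\pi_2\circ\sigma$.'' But a local section of $\pi_1$ has the form $\sigma(y)=(y,f(y))$ for some smooth $f\colon U\to M$ with $f(x)=x$, so $\pi_2\circ\sigma=f$ and $\Sigma=f(U)$. Nothing forces $f$ to have rank complementary to the orbit; for the diagonal section $\sigma(y)=(y,y)$ you get $\Sigma=U$, an open set of full dimension, not a slice. So this recipe does not produce a submanifold of complementary dimension meeting nearby classes once. You anticipated this as the hard step, and indeed it is where the proposal breaks.

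The paper fixes this by first proving (Step~1) that each orbit $O_x\subset M$ is an embedded submanifold, via the fibered product $O_x\times\{x\}=R\cap(M\times\{x\})$ and Lemma~\ref{lemma:pull-back}, identifying $T_xO_x$ inside $T_xM$. Only then (Step~2) does it choose a ball-like $S_x$ with $T_xS_x\oplus T_xO_x=T_xM$, and shrink $S_x$ twice: first so that transversality $T_yS_x\oplus T_yO_y=T_yM$ persists for all $y\in S_x$ (an open condition), and second so that $\pi|_{S_x}$ is injective, using that $P=(S_x\times S_x)\cap R$ is embedded of dimension $\dim S_x$ and contains the diagonal as an open subset. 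Your argument would go through once you replace the section construction by this orbit-then-transversal approach.
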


\begin{proof} 
{\em Step 1: Describing the orbits.}
Given $x\in M$, write $O_x$ for its equivalence class or orbit. 
It follows from the following fibered product diagram and Lemma \ref{lemma:pull-back} that $O_x\subset M$ is an embedded submanifold with tangent space given by 
$T_yO_x\times 0\cong T_{(y,x)}R  \cap (T_yM\times 0)$:
		$$\xymatrix{
			O_x\times \{x\} \ar[r]^{} \ar[d]_{}  & \{x\} \ar[d]  \\ 
			R \ar[r]^{\pi_2} & M
		}$$

{\em Step 2: Building the charts.}
Let $S_x\subset M$ be a ball-like submanifold through $x$ such that $T_xS_x\oplus T_xO_x=T_xM$. After eventually shrinking $S_x$ we can assume that (i) $T_yS_x\oplus T_yO_y=T_y M$ for all $y\in S_x$, and (ii) $\pi:S_x\to M/R$ is injective. We then use $\alpha_x:S_x\to M/R$ to define a chart. Note that (i) is equivalent to $T_yS_x\times 0\cap T_{(y,y)}R=0$, which is an open condition on $y$. Regarding (ii), 
the inclusion $S_x\times S_x\subset M\times M$ is transverse to $R\subset M\times M$, because of (i) and because $O_y\times O_y\subset R$ for every $O_y$. By Lemma \ref{lemma:pull-back} $P= (S_x\times S_x)\cap R\subset S_x\times S_x$ is an embedded submanifold and $\dim P=\dim S_x$:
		$$\xymatrix{
			P \ar[r]^{} \ar[d]  & R \ar[d]^{}  \\ 
			S_x\times S_x \ar[r]^{} & M\times M.
		}$$
The inclusion $\Delta(S_x)\subset P$ must be an open embedding, then we can find a ball-like open $x\in U$ such that $U\times U\cap P\subset \Delta(S_x)$ and $U\to M/R$ is injective.

{\em Step 3: Compatibility between charts.}
Given two charts $S_x\to M/R$ and $S_y\to M/R$ with nontrivial intersection, 
the inclusion $S_y\times S_x\subset M\times M$ is transverse to $R$, by an argument analogous to the one in Step 2. Then by Lemma \ref{lemma:pull-back} $P=(S_y\times S_x)\cap R\subset S_x\times S_y$ is an embedded submanifold with $\dim P=\dim S_x$.
The projections $\psi_1:P\to S_y$, $\psi_2:P\to S_x$, are injective, and the transition between the two charts is given by the composition $\psi_1\psi_2^{-1}$, so it is enough to show that $\psi_2:P\to S_x$ is 
\'etale, namely a local diffeomorphism. 
By Lemma \ref{lemma:pull-back} 
$T_{(y',x')}P=(T_{y'}S_y\times T_{x'}S_x)\cap T_{(y',x')}R$, 
and by Step 1 $\ker d\psi_2=T_{(y',x')}R\cap (T_{y'}M\times 0)=T_{y'}O_{y'}\times 0$. Then $d_{(y',x')}\psi_2:T_{(y',x')}P\to T_{x'}S_x$ is injective, hence an isomorphism, and $\psi_2$ is \'etale.

{\em Step 4: $\pi$ is a smooth submersion.}
By the Step 1 the intersection $(S_x\times M)\cap R$ is an embedded submanifold of $M\times M$ of dimension $\dim M$, and $\pi_2:(S_x\times M)\cap R\to M$ has injective differential, hence it is locally invertible. If $\phi:U\to (S_x\times M)\cap R$ is a local inverse around $x$, then $\pi_1\phi=\alpha_x^{-1}\pi:U\to S_x$, and therefore $\pi:M\to M/R$ is smooth. It is also a submersion because it admits local sections induced by the inclusions $S_x\to M$. This implies that $\pi$ is an open map, so the topology induced by our charts is indeed the quotient topology, and that the smooth structure on $M/R$ is uniquely determined by that on $M$.

{\em Step 5: Hausdorffness.}
If $M/R$ is Hausdorff then $R=(\pi\times\pi)^{-1}(\Delta_{M/R})\subset M\times M$ is closed. Conversely, if $R$ is closed, given $(y,x)\notin R$, we can find a basic open $(y,x)\in V\times U\subset (M\times M)\setminus R$, and since $\pi:M\to M/R$ is open, $\pi(U)$ and $\pi(V)$ separate $\pi(x)$ and $\pi(y)$ in $M/R$.
\end{proof}

All the relations we consider in the paper arise from actions of Lie groupoids. 
Given $G\toto M$ a Lie groupoid, and given $E$ a possibly non-Hausdorff manifold, a (right) {\bf action} $E\curvearrowleft G:\rho$ over $\mu:E\to M$ is a map $\rho:E\times_M G\to E$, $(e,g)\mapsto eg$, defined on the fibered product between $\mu$ and $t$, such that $\mu(eg)=s(g)$, $\rho_{hg}=\rho_h\rho_g$ and $\rho_{1_x}=\id_{E_x}$ (see \cite{dh} for more details). We say that $\rho$ is a {\bf principal} action if the {\it anchor} map $(\pi_1,\rho):E\times_M G\to E\times E$ is an embedding. Note that the anchor is injective if and only if the action is free, namely if $eg=e$ implies that $g=1_{\mu{e}}$, and that it is a topological embedding if and only if the division map $\delta:R\to G$, $(e,eg)\mapsto g$ is continuous, where $R$ is the equivalence relation. It follows from Godement criterion \ref{prop:godement} that the orbit space of a principal action is a well-defined possibly non-Hausdorff manifold. 


\begin{lemma}\label{principal}
\begin{enumerate}[(a)]
\item If $K\subset G\toto M$ is a wide embedded subgroupoid then right multiplication $G\times_M K\to G$, $(g,k)\mapsto gk$, is a principal action.
\item If $E\curvearrowleft G:\rho$ is principal, $E'\curvearrowleft G:\rho'$ is some other action and $\phi:E'\to E$ is equivariant, 
namely $\mu\phi=\mu'$ and $\phi(e'g)=\phi(e')g$ for every $e',g$,
then $\rho'$ is also principal.  
\end{enumerate}
\end{lemma}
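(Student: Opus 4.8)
\textbf{Proof plan for Lemma \ref{principal}.}

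The plan is to verify the two assertions directly from the definition of principal action, namely that the anchor map $(\pi_1,\rho)\colon E\times_M G\to E\times E$ is an embedding, which by the preceding discussion amounts to checking that the action is free (so the anchor is injective) and that the division map is continuous (so the anchor is a topological embedding), together with the fact that the anchor is an injective immersion.

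For part (a), I would take $E=G$ with $\mu=t$ and $\rho$ given by right multiplication $gk\mapsto gk$, defined on $G\times_M K$ (fibered product of $t\colon G\to M$ and $t|_K\colon K\to M$). Freeness is immediate: if $gk=g$ then, multiplying on the left by $g^{-1}$, we get $k=1_{s(g)}$, which is the identity at the relevant object. For the embedding property, note that the anchor $(\pi_1,\rho)\colon G\times_M K\to G\times G$ sends $(g,k)$ to $(g,gk)$; its image is exactly $\{(g,g')\in G\times G\colon t(g)=t(g'),\ g^{-1}g'\in K\}$, and the division map is $(g,g')\mapsto g^{-1}g'$, which is continuous (indeed smooth) as a composition of inversion and multiplication in $G$. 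Smoothness and injectivity of the differential of the anchor follow because right translation $R_k$ is a diffeomorphism of the appropriate source fibers, so $(g,k)\mapsto(g,gk)$ has injective differential; this uses that $K\subset G$ is embedded, so that $G\times_M K$ is a manifold (the maps $t|_K$ and $t$ are transverse since $t$ is a submersion) and the restriction of multiplication to it is smooth. Hence the anchor is an injective immersion that is a topological embedding, i.e.\ a smooth embedding, and the action is principal.

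For part (b), suppose $E\curvearrowleft G$ is principal, $E'\curvearrowleft G$ is any action, and $\phi\colon E'\to E$ is equivariant over the identity on $M$. First, freeness of $\rho'$: if $e'g=e'$, then applying $\phi$ and equivariance gives $\phi(e')g=\phi(e')$, and since $\rho$ is free, $g=1_{\mu'(e')}$. Thus the anchor $(\pi_1,\rho')\colon E'\times_M G\to E'\times E'$ is injective. It remains to see that it is a topological embedding, equivalently that the division map $\delta'\colon R'\to G$, $(e',e'g)\mapsto g$, is continuous, where $R'\subset E'\times E'$ is the orbit equivalence relation. The key observation is that $\delta'$ factors through $\delta$: given $(e',e'g)\in R'$, we have $(\phi(e'),\phi(e')g)=(\phi(e'),\phi(e'g))\in R$, and $\delta'(e',e'g)=g=\delta(\phi(e'),\phi(e'g))$. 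Since $\phi\times\phi\colon E'\times E'\to E\times E$ is continuous and restricts to a map $R'\to R$, and $\delta\colon R\to G$ is continuous by principality of $\rho$, the composition $\delta'=\delta\circ(\phi\times\phi)|_{R'}$ is continuous. Therefore the anchor of $\rho'$ is an injective immersion (injectivity of the differential is again automatic from freeness, using that the $G$-action on fibers is by the embedding condition, or can be argued pointwise exactly as in part (a)) that is a topological embedding, hence a smooth embedding, so $\rho'$ is principal.

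The main obstacle I anticipate is being careful about the non-Hausdorff setting and about exactly which smoothness claims need the embedding hypotheses: one must ensure the relevant fibered products are manifolds (using transversality against a submersion), that the restricted multiplication/action maps are genuinely smooth on them, and that ``topological embedding'' is handled via the continuity of the division map rather than assumed. Beyond that the argument is a routine unwinding of definitions, with part (b) being essentially a diagram chase through the equivariant map $\phi$.
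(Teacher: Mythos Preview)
Your plan follows the same decomposition as the paper: check injectivity, immersiveness, and the topological-embedding property of the anchor separately. For (a) you argue each piece by hand; the paper compresses this into one line by observing that the anchor $(g,k)\mapsto(g,gk)$ lands in $G{}_t\!\times_t G$ and that postcomposing with the global division $(g,h)\mapsto(g,g^{-1}h)$ recovers the canonical embedding $G\times_M K\hookrightarrow G\times G$. Both are correct. For (b), your arguments for freeness of $\rho'$ and for the factorization $\delta'=\delta\circ(\phi\times\phi)|_{R'}$ are exactly the paper's.

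The soft spot is the immersion step in (b). You assert that injectivity of $d(\pi_1,\rho')$ is ``automatic from freeness'' or ``can be argued pointwise exactly as in part (a)''. Neither is adequate as written. The argument in (a) relied on the fact that, for $G$ acting on itself, the map $k\mapsto gk$ is left translation in $G$, a diffeomorphism of $t$-fibers; there is no such structure available for a general $G$-space $E'$. And while it is ultimately true that a free Lie groupoid action has immersive anchor, this is not automatic: it rests on the nontrivial fact that isotropy groups of a Lie groupoid are embedded Lie subgroups (so that trivial isotropy group forces trivial isotropy Lie algebra), which you have not invoked and which the paper does not assume. The paper instead uses the hypothesis on $\rho$ directly: if $(w,v)\in\ker d(\pi_1,\rho')$ then $w=0$ and $d\rho'(0,v)=0$; differentiating the equivariance $\phi\circ\rho'=\rho\circ(\phi\times\id)$ gives $d\rho(0,v)=d\phi\, d\rho'(0,v)=0$, so $(0,v)$ lies in $\ker d(\pi_1,\rho)$, which vanishes because $\rho$ is principal. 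You should replace your hand-wave with this argument.
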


\begin{proof}
Regarding (a), note that the image of the anchor map $(g,k)\mapsto (g,gk)$ is included in the fibered product $G {_t\times_t} G$, so we can compose it with the division map $G {_t\times_t} G\to G\times G$, $(g,h)\mapsto (g,g^{-1}h)$, and recover the canonical embedding $G\times_M K\to G\times G$.

To prove (b) we will show that the anchor map $(\pi_1,\rho')$ is injective, immersive and a topological embedding. If $e'g=e'$ then $\phi(e')g=\phi(e')$, and since $\rho$ is free, $g$ must be a unit, proving that $\rho'$ is also free, and $(\pi_1,\rho')$ injective. Now let $(w,v)\in T_{(e',g)}(E'\times_M G)\subset T_{e'}E'\times T_gG$ such that $d(\pi_1,\rho')(w,v)=(w,d\rho'(w,v))=0$. Then $w=0$ and $0=d\phi d\rho'(0,v)=d\rho(0,v)$. Since $(\pi_1,\rho)$ is immersive, $v=0$ and $(\pi_1,\rho')$ is also immersive. Finally, calling $R'$ and $R$ the relations defined by $\rho'$ and $\rho$, we can write the division map $\delta'$ as the composition \smash{$R'\xto{\phi\times\phi} R\xto\delta G$}, hence $\delta'$ is continuous and the anchor is a topological embedding.
\end{proof}




\frenchspacing

{\footnotesize

\bigskip
 
\sf{\noindent Matias del Hoyo\\
Universidade Federal Fluminense (UFF),\\ 
Rua Professor Marcos Waldemar de Freitas Reis, s/n,
Niterói, 24.210-201 RJ, Brazil.\\
mldelhoyo@id.uff.br}

\

\sf{\noindent Daniel Lopez\\
Instituto de Matematica Pura e Aplicada (IMPA),  \\ 
Estrada Dona Castorina 110, Rio de Janeiro, 22460-320, RJ, Brazil.\\
daflopez@impa.br}

}

\end{document}